\newif\if@fewtab\@fewtabtrue
\xdef\hourmin{\number\count255}
\xdef\hourmin{\hourmin:\ifnum\count255<10 0\fi\the\count255}}
\def\ps@draft{\let\@mkboth\@gobbletwo
     \def\@oddfoot{\hbox to 7 cm{\tiny \versionno
        \hfil}\hskip -7cm\hfil\rm\thepage \hfil {\tiny\draftdate}}
     \def\@oddhead{}
     \def\@evenhead{}\let\@evenfoot\@oddfoot}
\def\draftdate{\number\month/\number\day/\number\year\ \ \ \hourmin }
\def\citen#1{\if@filesw \immediate\write \@auxout {\string\citation{#1}}\fi%
\@tempcntb\m@ne \let\@h@ld\relax \def\@citea{}%
\@for \@citeb:=#1\do {\@ifundefined {b@\@citeb}%
     {\@h@ld\@citea\@tempcntb\m@ne{\bf ?}%
     \@warning {Citation `\@citeb ' on page \thepage \space undefined}}%
     {\@tempcnta\@tempcntb \advance\@tempcnta\@ne
     \setbox\z@\hbox\bgroup\ifcat0\csname b@\@citeb \endcsname \relax
     \egroup \@tempcntb\number\csname b@\@citeb \endcsname \relax
     \else \egroup \@tempcntb\m@ne \fi \ifnum\@tempcnta=\@tempcntb
     \ifx\@h@ld\relax \edef \@h@ld{\@citea\csname b@\@citeb\endcsname}%
     \else \edef\@h@ld{\hbox{--}\penalty\@highpenalty
     \csname b@\@citeb\endcsname}\fi
     \else \@h@ld\@citea\csname b@\@citeb \endcsname \let\@h@ld\relax \fi}%
\def\@citea{,\penalty\@highpenalty\hskip.13em plus.13em minus.13em}}\@h@ld}
\def\@citex[#1]#2{\@cite{\citen{#2}}{#1}}%
\def\@cite#1#2{\leavevmode\unskip\ifnum\lastpenalty=\z@\penalty\@highpenalty\fi%
   \ [{\multiply\@highpenalty 3 #1%
   \if@tempswa,\penalty\@highpenalty\ #2\fi}]}   %
\newcommand\ad[1]  {\mathrm{ad}_{#1}}
\def\apo           {\mbox{\sc s}}
\def\apoi          {\mbox{\sc s}^{-1}}
\def\be            {\begin{equation}}
\def\bearl         {\begin{array}{l}}
\def\bearll        {\begin{array}{ll}}
\newcommand\bee[5] {\begin{eqnarray} #5 \nonumber\\[-#1.#2em]~\\[#3.#4em]~\nonumber\end{eqnarray}}
\def\bico          {_{F}}
\def\bicoa         {_\bicoaa}
\def\bicoaa        {{\triangleright\triangleleft}}
\def\bicoad        {coadjoint bimodule}
\def\bbico         {_{\!F}}
\def\bicom         {coregular bimodule}
\def\Bimod         {\mbox{-Bimod}}
\def\bohr          {{\ohr\bico}}
\def\boti          {\,{\boxtimes}\,}
\def\brho          {{\rho\bbico^{}}}
\def\C             {{\ensuremath{\mathcal C}}}
\def\catpic        {{\footnotesize \shadowbox{\C}}}
\def\catpicsm      {{\tiny \shadowbox{\C}}}
\def\CbC           {\ensuremath{\hspace{.3pt}\overline{\mathcal C}\hspace{.6pt}{\boxtimes}%
                    \hspace{1.4pt}\mathcal C}\xspace}
\def\cdo           {\,{\cdot}\,}
\def\cHbHb         {c^{}_{\!\Hb,\Hb}}
\def\cir           {\,{\circ}\,}
\newcommand\coen[1]{\int^{#1}\hspace*{-.23em}}
\newcommand\coend[1]{\int^{#1}\hspace*{-.31em}#1^\vee{\otimes}#1}
\newcommand\Coend[2]{\int^{#2}\hspace*{-.23em}#1(#2,#2)}
\def\coa           {_\triangleright}
\def\coar          {_\triangleleft}
\def\complex       {{\ensuremath{\mathbbm C}}}
\def\coop          {^{\mathrm{coop}}}
\def\CopC          {{\ensuremath{\mathcal C\op\hspace*{.8pt}{\times}\hspace*{1.5pt}\mathcal C}}}
\def\COPC          {{\ensuremath{\mathcal C\op{\times}\hspace{1.1pt}\mathcal C}}}
\def\D             {{\ensuremath{\mathcal D}}}
\def\delp          {\Delta\bico\hspace*{-9pt}{}'\hspace*{6pt}}
\def\drin          {f_Q}
\def\ee            {\end{equation}}
\def\eear          {\end{array}}
\def\End           {{\ensuremath{\mathrm{End}}}}
\def\EndC          {{\ensuremath{\mathrm{End}_\C}}}
\def\EndH          {{\ensuremath{\mathrm{End}_{H}}}}
\def\EndHH         {{\ensuremath{\mathrm{End}_{H|H}}}}
\def\eps           {\varepsilon}
\def\eq            {\,{=}\,}
\newcommand\equ[1] {\stackrel{\mbox{\tiny\erf{#1}}}=}
\newcommand\erf[1] {(\ref{#1})}
\def\Fbb           {{\ensuremath{G_{\!\Box}^H}}}
\def\Fbx           {{\ensuremath{G_{\!\otimes_\ko}^H}}}
\def\Fbxo          {{\ensuremath{G_{\!\otimes_\ko}^{H;\omega}}}}
\def\findim        {fi\-ni\-te-di\-men\-si\-o\-nal}
\def\fmap          {{{\ensuremath{\Psi}}}}
\def\H             {\ensuremath{\mathscr H}}
\def\Ha            {{\ensuremath{H^*_\triangleright}}}
\def\Haa           {{\ensuremath{H\hspace*{-6pt}H^*\bicoa}}}
\def\Haalg         {{\ensuremath{H\hspace*{-6.8pt}H^*\bicoa}}}
\def\Haasm         {{\ensuremath{H\hspace*{-4.1pt}H^*\bicoa}}}
\def\Hb            {{\ensuremath{F}}}
\def\Hbi           {{\ensuremath{F_{\!\idsm_H}}}}
\def\HBimod        {{\ensuremath{H}\mbox{-Bimod}}}
\def\Hbo           {{\ensuremath{F_{\!\omega}^{}}}}
\def\HK            {{{\ensuremath K}}}
\def\HMod          {{\ensuremath{H}\text{-Mod}}}
\def\Hom           {{\ensuremath{\mathrm{Hom}}}}
\def\HomC          {{\ensuremath{\mathrm{Hom}_\C}}}
\def\HomH          {{\ensuremath{\mathrm{Hom}_H}}}
\def\HomHH         {{\ensuremath{\mathrm{Hom}_{H|H}}}}
\def\Homk          {{\ensuremath{\mathrm{Hom}_\ko}}}
\def\Hs            {{\ensuremath{H^*}}}
\def\Hss           {{H^*_{}}}
\def\id            {\mbox{\sl id}}
\def\Id            {\mbox{\sl Id}}
\def\idA           {\ensuremath{\id_A}}
\def\idHs          {\ensuremath{\id_{{H^{\phantom:}}^{\!\!*}}}}
\def\idHsHs        {\ensuremath{\id_{{H^{\phantom:}}^{\!\!*}\otimes{H^{\phantom:}}^{\!\!*}}}}
\def\idsm          {\mbox{\footnotesize\sl id}}
\def\idXs          {\ensuremath{\id_{X^{*_{}}_{\phantom:}}}}
\def\iHaa          {i^{\bicoaa}}
\def\iHb           {\imath^{\Hb}}
\def\iHbo          {\imath^{\Hbo}}
\def\iHK           {\imath^\HK}
\def\iN            {\,{\in}\,}
\def\J             {{\ensuremath{\mathcal J}}}
\def\jHZx          {\hspace*{1.65em}j_H^Z} 
\def\ko            {{\ensuremath{\Bbbk}}}
\newcommand\labl[1]{\label{#1}\ee}
\def\Mod           {\mbox{-Mod}}
\newcommand\nxl[1] {\\[#1mm]}
\newcommand\Nxl[1] {\\[-1.3em]\\[#1mm]}
\def\ohr           {\reflectbox{$\rho$}}
\def\ohrv          {\ohr_{\scriptscriptstyle\!\vee}^{}}
\def\ohrV          {{}_{\scriptscriptstyle\vee\!}^{}\ohr}
\def\omegi         {{\omega^{\!{\scriptscriptstyle-1}}}_{}}
\def\omegis        {{\omega^{\!{\scriptscriptstyle-1}^{\scriptstyle*}}}_{}}
\def\one           {{\bf1}}
\def\op            {^{\mathrm{op}}}
\def\oti           {\,{\otimes}\,}
\def\Oti           {{\otimes}}
\def\otik          {\,{\otimes_\ko}\,}
\def\Otik          {{\otimes_\ko}}
\def\piv           {\pi}
\def\pqt           {partial monodromy trace}
\def\Qq            {\mathcal Q^{\rm l}}
\def\QQ            {\mathcal Q}
\def\qquand        {\qquad{\rm and}\qquad}
\def\rep           {representation}
\def\rhov          {\rho_{\scriptscriptstyle\!\vee}^{}}
\def\rhoV          {{}_{\scriptscriptstyle\vee\!}^{}\rho}
\def\SK            {S_\HK}
\def\slz           {\ensuremath{\mathrm{SL}(2,\zet)}}
\def\slze          {\ensuremath{\Gamma_{\!1;1}}}
\def\slzmn         {\ensuremath{\Gamma_{\!1;m+n}}}
\def\sse           {\scriptsize }
\def\tauHH         {\tau^{}_{\!H,H}}
\def\tauHHv        {\tau^{}_{\!H,H^*_{}}}
\def\tauHvH        {\tau^{}_{\!H^*_{}\!,H}}
\def\tauQ          {t'{}_{\!\!\QQ}}
\def\Times         {\,{\times}\,}
\def\TK            {T_\HK}
\def\To            {\,{\to}\,}
\def\uvi           {{t}}
\def\Vect          {\ensuremath{\mathcal V}\mbox{\sl ect}}
\def\Vectk         {\ensuremath{\mathcal V\mbox{\sl ect}_\ko}}
\def\wee           {*}  
\newcommand\wiha[1]{\Wiha(#1)}
\def\Wiha          {t_{\QQ}}
\def\Xs            {X^{*_{}}_{\phantom:}}
\def\zet           {{\ensuremath{\mathbb Z}}}
\newcommand\includepichtft[1] {{\begin{picture}(0,0)(0,0)
                    \scalebox{.304}{\includegraphics{imgs/pic_htft_#1.eps}}\end{picture}}}
\newcommand\Includepichtft[1] {{\begin{picture}(0,0)(0,0)
                    \scalebox{.38}{\includegraphics{imgs/pic_htft_#1.eps}}\end{picture}}}
\newcommand\includepichopf[1] {{\begin{picture}(0,0)(0,0)
                    \scalebox{.304}{\includegraphics{imgs/pic_hopf_#1.eps}}\end{picture}}}
\newcommand\Includepichopf[1] {{\begin{picture}(0,0)(0,0)
                    \scalebox{.38}{\includegraphics{imgs/pic_hopf_#1.eps}}\end{picture}}}
\newcommand\eqpic[4]{\begin{eqnarray}
                    \begin{picture}(#2,#3){}\end{picture}\nonumber\\
                    \raisebox{-#3pt}{ \begin{picture}(#2,#3) #4 \end{picture} }
                    \label{#1} \\~\nonumber \end{eqnarray} }
\newcommand\Eqpic[4]{\begin{eqnarray}
                    \begin{picture}(#2,#3){}\end{picture}\nonumber\\
                    \raisebox{-#3pt}{ \begin{picture}(#2,#3) #4 \end{picture} }
                    \nonumber \\[3pt]~\label{#1} \end{eqnarray} }
\newtheorem{thm}{Theorem}
\newtheorem{lem}[thm]{Lemma}
\newtheorem{lemma}[thm]{Lemma}
\newtheorem{prop}[thm]{Proposition}
\newtheorem{cor}[thm]{Corollary}
\theoremstyle{definition}
\newtheorem{rem}[thm]{Remark}
\newtheorem{defi}[thm]{Definition}
\begin{document}

\def\cir{\,{\circ}\,} 
\numberwithin{equation}{section}
\numberwithin{thm}{section}

\begin{flushright}
    {\sf ZMP-HH/11-9}\\
    {\sf Hamburger$\;$Beitr\"age$\;$zur$\;$Mathematik$\;$Nr.$\;$408}\\[2mm]
    June 2011
\end{flushright}
\vskip 3.5em
\begin{tabular}c \Large\bf MODULAR INVARIANT FROBENIUS ALGEBRAS \\[2mm]
                  \Large\bf FROM RIBBON HOPF ALGEBRA AUTOMORPHISMS
\end{tabular}\vskip 2.1em
\begin{center}
   ~J\"urgen Fuchs\,$^{\,a}$,~
   ~Christoph Schweigert\,$^{\,b}$,~
   ~Carl Stigner\,$^{\,a}$
\end{center}

\vskip 9mm

\begin{center}\it$^a$
   Teoretisk fysik, \ Karlstads Universitet\\
   Universitetsgatan 21, \ S\,--\,651\,88\, Karlstad
\end{center}
\begin{center}\it$^b$
   Organisationseinheit Mathematik, \ Universit\"at Hamburg\\
   Bereich Algebra und Zahlentheorie\\
   Bundesstra\ss e 55, \ D\,--\,20\,146\, Hamburg
\end{center}
\vskip 5.3em

\noindent{\sc Abstract}
\\[3pt]
For any finite-dimensional factorizable ribbon Hopf algebra $H$ and any ribbon
automorphism of $H$, we establish the existence of the following structure: 
an $H$-bimodule \Hbo\ and a bimodule morphism $Z_\omega$ from Lyubashenko's Hopf
algebra object \HK\ for the bimodule category to \Hbo. This morphism is invariant 
under the natural action of the mapping class group of the one-punctured torus
on the space of bimodule morphisms from \HK\ to \Hbo. 
We further show that the bimodule \Hbo\ can be endowed with a
natural structure of a commutative symmetric Frobenius algebra in the monoidal
category of $H$-bimodules, and that it is a special Frobenius algebra iff $H$
is semisimple.
\\
The bimodules \HK\ and \Hbo\ can both be characterized as coends of suitable
bifunctors. The morphism $Z_\omega$ is obtained by applying a monodromy
operation to the coproduct of \Hbo; a similar construction for the product of
\Hbo\ exists as well.
\\
Our results are motivated by the quest to understand the bulk state space
and the bulk partition function in two-dimensional conformal field theories
with chiral algebras that are not necessarily semisimple.

  \newpage

\section{Introduction}

One remarkable feature of complex Hopf algebras is their intimate connection with
low-dimen\-si\-o\-nal topology, including invariants of knots, links and
three-manifolds.
These connections are particularly well understood for semisimple Hopf algebras.
The representation category of a semisimple factorizable finite-dimensional
(weak) Hopf algebra is a modular tensor category \cite{nitv} and thus allows
one to construct a three-dimensional topological field theory. As a
consequence, it provides finite-dimensional projective
representations of mapping class groups of punctured surfaces.

It has been shown by Lyubashenko \cite{lyub6,lyub11} that such representations of
mapping class groups can be constructed for non-semisimple factorizable Hopf
algebras $H$ as well.
This construction is in fact purely categorical, in the sense that it only
uses the representation category as an abstract ribbon category with certain
non-degeneracy properties. In the present paper we apply this construction
not to the category of left $H$-modules, but rather to the category of
$H$-\emph{bimodules}. To this end we endow this category \HBimod\ with the
structure of a monoidal category using the coproduct of $H$ (rather than by
taking the tensor product $\otimes_H$ over $H$ as an associative algebra). With
this tensor product, the category \HBimod\ can be endowed with further structure
such that it becomes a sovereign braided monoidal category.

For our present purposes we restrict to the case that the punctured surface in
question is a one-punctured torus. Thus in the absence of punctures the mapping 
class group is the modular group \slz; if punctures are present, then the 
mapping class group has additional generators given by Dehn twists around the 
punctures and by braiding homeomorphisms \cite[Sect.\,4.3]{lyub6}. We denote 
the mapping class group of the one-punctured torus by \slze.

Specializing the results of \cite{lyub6}, we obtain a Hopf algebra object
\HK\ in the monoidal category \HBimod. For any $H$-bimodule $X$ the vector space
$\HomHH(\HK,X)$ of bimodule morphisms then carries a projective representation 
of \slze. The main result of this paper is the following assertion:

\medskip\noindent{\bf Theorem}\\[2pt] {\it
Let $H$ be a (not necessarily semisimple) finite-dimensional factorizable
ribbon Hopf algebra over an algebraically closed field of characteristic zero,
and let $\omega\colon H\To H$ be an automorphism of $H$ as a ribbon Hopf
algebra. Then there is an object \Hbo\ in the category \HBimod\ and a morphism
   $$
   Z_\omega \in \HomHH(\HK,\Hbo)
   $$
that is invariant under the natural action 
\cite{lyub6} of the mapping class group \slze\ on $\HomHH(\HK,\Hbo)$.
}

\medskip

The considerations leading to this result are inspired by structure one hopes 
to encounter in certain two-dimensional conformal field theories that are
based on non-semisimple representation categories. More information about
this motivation can be found in appendix \ref{app:cft}; here it suffices to
remark that \Hbo\ is a candidate for what in conformal field theory is called
the \emph{algebra of bulk fields}, and that the morphism $Z_\omega$ is a 
candidate for a modular invariant \emph{partition function}. Such a partition 
function should also enjoy integrality properties. As we will show elsewhere
\cite{fuSs4}, for $\omega \eq \id_H$ the relevant integers 
are closely related to the Cartan matrix of the algebra $H$.

To arrive at our result we show in fact first that the object \Hbo\ actually 
carries a lot more natural structure: \Hbo\ is a commutative symmetric Frobenius 
algebra in \HBimod. Furthermore, the Frobenius algebra \Hbo\ is a special\,%
  \footnote{~A Frobenius is called special iff, up to non-zero scalars, the
  counit is a left inverse of the unit and the coproduct is a right inverse of
  the product, see Def.\ \ref{def-spec}.}
Frobenius algebra if and only if the Hopf algebra $H$ is semisimple.
A Frobenius algebra carries a natural coalgebra structure; the invariant morphism
$Z_\omega$ is obtained by applying a monodromy operation to the coproduct of \Hbo.

\medskip

This paper is organized as follows.
In Section \ref{sec2} we introduce the relevant structure of a
monoidal category on \HBimod\ and construct, for the case $\omega \eq \id_H$,
the bimodule $\Hb \eq \Hbi$ as a Frobenius algebra in \HBimod. In Section
\ref{sec-braid} we endow the monoidal category \HBimod\ with a natural
braiding and show that with respect to this braiding the Frobenius algebra
\Hb\ is commutative. In Section \ref{sec-dual} it is established that
\Hb\ is symmetric, has trivial twist, and is special iff $H$ is semisimple.
Modular invariance of $Z_{\idsm_H}$ is proven in Section \ref{sec-modinv}.
Section \ref{sec-auto} is finally devoted to the case of a general ribbon Hopf
algebra automorphism $\omega$ of $H$, which can actually be treated by modest
modifications of the arguments of Sections \ref{sec2}\,--\,\ref{sec-modinv}.
In appendix \ref{app:A} we gather some notions from category theory and explain
how the bimodules \HK\ and \Hbo\ can be characterized as coends of suitable
bifunctors. The latter shows that the objects in our constructions are
canonically associated with the category of $H$-bimodules as an abstract
category. Appendix \ref{app:cft} contains some motivation from (logarithmic)
conformal field theory.


\section{A Frobenius algebra in the bimodule category}\label{sec2}

\subsection{Finite-dimensional ribbon Hopf algebras}

In this section we collect some basic definitions and notation for Hopf
algebras and recall that \findim\ Hopf algebras admit a canonical
Frobenius algebra structure.

Throughout this paper, \ko\ is an algebraically closed field of
characteristic zero and, unless noted otherwise,
$H$ is a \findim\ factorizable ribbon Hopf algebra over \ko.
We denote by $m$, $\eta$, $\Delta$, $\eps$ and $\apo$ the product,
unit, coproduct, counit and antipode of the Hopf algebra $H$.

There exist plenty of factorizable ribbon Hopf algebras (see e.g.\
\cite{burc}). For instance, the Drinfeld double of a \findim\ Hopf algebra $K$
is factorizable ribbon provided that \cite[Thm.\,3]{kaRad} a certain condition
for the square of the antipode of $K$ is satisfied.
Let us recall what it means that a Hopf algebra is factorizable ribbon.

\begin{defi} ~\nxl1
(a)\, A Hopf algebra $H \,{\equiv}\, (H,m,\eta,\Delta,\eps,\apo)$ is called
\emph{quasitriangular} iff it is endowed with an invertible element
$R \iN H\oti H$ (called the \emph{R-matrix}) that intertwines the coproduct
and opposite coproduct, i.e.\ $\Delta^{\!\rm op} \eq \ad R \circ \Delta$,
and satisfies
   \be
   (\Delta \oti \id_H) \circ R = R_{13}\cdot R_{23} \qquand
   (\id_H \oti \Delta) \circ R = R_{13}\cdot R_{12} \,.
   \labl{deqf-qt}
(b)\, The \emph{monodromy matrix} $Q \iN H\oti H$ of a quasitriangular 
Hopf algebra $(H,R)$ is the invertible element
   \be
   Q := R_{21}\,{\cdot}\, R \equiv (m\oti m)
   \circ (\id_H^{}\oti\tauHH\oti\id_H^{}) \circ ((\tauHH\cir R) \oti R) \,.
   \ee
(c)\, A quasitriangular Hopf algebra $(H,R)$ is called a \emph{ribbon} Hopf 
algebra iff it is endowed with a central invertible element $v\iN H$, called the
\emph{ribbon element}, that satisfies $\apo \cir v \eq v$, $\eps \cir v \eq 1$ 
and $\Delta \cir v \eq (v\oti v) \cdo Q^{-1}$.
 \\[2pt]
(d)\, A quasitriangular Hopf algebra $(H,R)$ is called \emph{factorizable}
iff the monodromy matrix can be written as $Q \eq \sum_\ell h_\ell \oti k_\ell$
with $\{h_\ell\}$ and $\{k_\ell\}$ two vector space bases of $H$.
\end{defi} \smallskip

Here and below, the symbol $\otimes$ denotes the tensor product over \ko, and
for vector spaces $V$ and $W$ the linear map $\tau_{V,W}\colon V\oti W
\,{\stackrel\simeq\to}\, W\oti V$ is the flip map which exchanges the two
tensor factors.  Also, we canonically identify $H$ with $\Homk(\ko,H)$ and think
of elements of (tensor products over \ko\ of) $H$ and $\Hs \eq \Homk(H,\ko)$ as
(multi)linear maps. This has e.g.\ the advantage that many of our considerations
still apply directly in the situation that $H$ is a Hopf algebra, with adequate
additional structure and properties, in an arbitrary \ko-linear ribbon category
instead of \Vectk. Various properties of the R-matrix and of the ribbon element,
as well as of some further distinguished elements of $H$, will be recalled later
on. Note that we do not assume the Hopf algebra $H$ to be semisimple;
in particular, the ribbon element does not need to be semisimple.

We also need a few further ingredients that are available for general \findim\
Hopf algebras, without assuming quasitriangularity, in particular the notions of
(co)integrals and of a Frobenius structure for Hopf algebras.

\begin{defi} ~\nxl1
A \emph{left integral} of a Hopf algebra $H$ is a morphism of left $H$-modules 
from the trivial $H$-mo\-du\-le $(\ko,\eps)$ to the regular $H$-module $(H,m)$,
i.e.\ an element $\Lambda \iN H$ satisfying
$m \cir (\id_H \oti \Lambda) \eq \Lambda \cir \eps$.
 \\
A \emph{right cointegral} of $H$ is a morphism of right $H$-comodules from 
$(\ko,\eta)$ to $(H,\Delta)$, i.e.\ an element $\lambda \iN \Hs$ 
satisfying $(\lambda \oti \id_H) \cir \Delta \eq \eta \cir \lambda$.
 \\
Right integrals and left cointegrals are defined analogously.
\end{defi}

Recall \cite{laSw} that for a \findim\ Hopf \ko-algebra the antipode is
invertible and that $H$ has, up to normalization, a unique non-zero left integral
$\Lambda\iN H$ and a unique non-zero right cointegral $\lambda\iN\Hs$. The number
$\lambda\cir\Lambda \iN \ko$ is invertible. A factorizable ribbon Hopf algebra
is unimodular \cite[Prop.\,3(c)]{radf13}, i.e.\ the left integral
$\Lambda$ is also a right integral, implying that $\apo\cir\Lambda \eq \Lambda$.

\smallskip

The integral and the cointegral allow one to endow Hopf \ko-algebras with more
algebraic structure. The following characterization of Frobenius algebras
will be convenient.

\begin{defi}
A \emph{Frobenius algebra} $A$ in \Vectk\ is a vector space $A$ together with
(bi)linear maps $m_A$, $\eta_A$, $\Delta_A$ and $\eps_A$ such that $(A,m_A,\eta_A)$
is an (associative, unital) algebra, $(A,\Delta_A,\eps_A)$ is a
(coassociative, counital) coalgebra and
   \be
   (m_A \oti \id_A) \circ (\id_A \oti \Delta_A) = \Delta_A \oti m_A
   = (\id_A \oti m_A) \circ (\Delta_A \oti \id_A) \,,
   \ee
i.e.\ the coproduct $\Delta_A$ is a morphism of $A$-bimodules.
\end{defi} \smallskip

We have

\begin{lem}\label{H-A}
A \findim\ Hopf \ko-algebra $(H,m,\eta,\Delta,\eps,\apo)$ carries a canonical
structure of a Frobenius algebra $A$, with the same algebra structure on $A \eq H$,
and with Frobenius coproduct and Frobenius counit given by
   \be
   \Delta_A = (m \oti \apo) \circ (\id_A \oti (\Delta\cir\Lambda) )
   \qquand \eps_A = (\lambda \cir \Lambda)^{-1}\, \lambda \,.
   \labl{DeltaA-epsA}
\end{lem}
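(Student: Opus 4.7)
The plan is to verify the Frobenius algebra axioms directly from the explicit formulae, drawing on two classical Hopf-algebraic identities. The central observation is that the element
\[
\mathsf{C} \df \sum \Lambda_{(1)} \oti \apo(\Lambda_{(2)}) \;\iN\; H \oti H
\]
is a \emph{Casimir}, in the sense that $(a \oti 1) \cdot \mathsf{C} \eq \mathsf{C} \cdot (1 \oti a)$ for every $a \iN H$. I would prove this centrality by combining the left-integral identity $\Delta(a\Lambda) \eq \eps(a)\,\Delta(\Lambda)$ with the antihomomorphism property $\Delta \cir \apo \eq \tauHH \cir (\apo \oti \apo) \cir \Delta$ and the antipode axiom $m \cir (\apo \oti \id) \cir \Delta \eq \eta \cir \eps$; concretely, one multiplies the integral identity by $\apo(a_{(3)})$ in the appropriate slot and collapses using the antipode axiom.

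Given the centrality of $\mathsf{C}$, the two Frobenius compatibility relations $(m \oti \id) \cir (\id \oti \Delta_A) \eq \Delta_A \cir m \eq (\id \oti m) \cir (\Delta_A \oti \id)$ are immediate: by definition $\Delta_A(a) \eq (a \oti 1)\,\mathsf{C}$, and centrality also yields $\Delta_A(a) \eq \mathsf{C}\,(1 \oti a)$, so both sides of each relation agree with $(a \oti 1)\,\mathsf{C}\,(1 \oti b)$ when evaluated on $a \oti b$.

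For counitality, the axiom $(\id_A \oti \eps_A) \cir \Delta_A \eq \id_A$ reduces to the identity $\sum \Lambda_{(1)}\, \lambda(\apo(\Lambda_{(2)})) \eq (\lambda \cir \Lambda)\cdot 1_H$, which follows by applying the right-cointegral axiom $(\lambda \oti \id) \cir \Delta \eq \eta \cir \lambda$ to $\apo(\Lambda)$ and then applying $\apo^{-1}$, using that $\lambda \cir \apo(\Lambda) \eq \lambda \cir \Lambda$ (automatic for the unimodular $H$ of interest here). The other counit equation unfolds to the classical Larson--Sweedler Fourier-inversion formula
\[
\sum \lambda(a\,\Lambda_{(1)})\, \apo(\Lambda_{(2)}) \eq (\lambda \cir \Lambda)\, a \qquad \forall\, a \iN H \,,
\]
which expresses that the Frobenius pairing $(x,y) \mapsto \lambda(xy)$ on $H$ is non-degenerate with explicit inverse encoded by $\mathsf{C}$; it can be proved by showing that the left-hand side is a morphism of left $H$-modules and matching values at $a \eq 1$. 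Coassociativity of $(A,\Delta_A,\eps_A)$ is then a formal consequence of the Frobenius compatibility together with the counit, as for any Frobenius algebra.

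The main obstacle I expect is the centrality of $\mathsf{C}$ (equivalently, the Fourier-inversion formula). Both identities are classical but encode the delicate interplay between $\Lambda$, $\lambda$, and $\apo$ that underlies Larson--Sweedler's theorem that $H$ is Frobenius; they are precisely where the non-trivial Hopf-algebraic content resides. Everything else in the proof reduces to formal bookkeeping once the centrality is in place.
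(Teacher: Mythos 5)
The paper itself gives no proof of Lemma \ref{H-A} --- it only points to the literature (Pareigis, Kadison--Stolin, and appendix A.2 of Fuchs--Schweigert) --- so your self-contained verification cannot be compared to an argument in the text; on its own terms it follows the standard route, and its skeleton is sound: the centrality $(a\oti 1)\,\mathsf{C}=\mathsf{C}\,(1\oti a)$ of $\mathsf{C}=\sum\Lambda_{(1)}\oti\apo(\Lambda_{(2)})$ does follow from $\Delta(a_{(1)}\Lambda)=\eps(a_{(1)})\Delta(\Lambda)$, the anti-(co)multiplicativity of $\apo$ and the antipode axiom, and once it is in place the bimodule-morphism property of $\Delta_A$, coassociativity, and the counit axiom on the side governed by the identity $\sum\lambda(a\Lambda_{(1)})\,\apo(\Lambda_{(2)})=(\lambda\cir\Lambda)\,a$ are indeed formal.

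The one genuine soft spot is the other counit axiom. Your own computation (via the fact that $\lambda\cir\apo$ is a left cointegral) yields $\sum\Lambda_{(1)}\,\lambda(\apo(\Lambda_{(2)}))=\lambda(\apo(\Lambda))\cdot 1$, so $(\id\oti\eps_A)\cir\Delta_A=\id$ holds if and only if $\lambda(\apo(\Lambda))=\lambda(\Lambda)$, and you justify this equality only through unimodularity ($\apo(\Lambda)=\Lambda$). The lemma, however, is stated for an \emph{arbitrary} finite-dimensional Hopf algebra, with no unimodularity hypothesis, so as written your proof covers only a special case of that step (harmless for the factorizable ribbon $H$ used later in the paper, but not for the statement as given). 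The missing equality is true in general and follows from what you have already proved: your identities give $\fmap^{-1}\cir\fmap=(\lambda\cir\Lambda)\,\id_H$, while the opposite composition, computed with the displayed identity, equals $\lambda(\apo(\Lambda))\,\id_{\Hs}$; since $\lambda\cir\Lambda\neq 0$ and the spaces are finite-dimensional, $\fmap$ is invertible and the two scalars must coincide. Equivalently, $(\lambda\cir\Lambda)^{-1}\lambda$ is a one-sided counit for $\Delta_A$ and, once one knows $\lambda(\apo(\Lambda))\neq 0$, so is $(\lambda(\apo(\Lambda)))^{-1}\lambda$ on the other side, and a left and a right counit for the same coproduct agree. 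Adding one such line closes the gap and makes your argument a complete proof in the stated generality.
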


This actually holds more generally for finitely generated projective Hopf algebras
over commutative rings (see e.g.\ \cite{pare7,kaSt2}), as well as for any Hopf
algebra in an additive ribbon category \C\ that has an invertible antipode and
a left integral $\Lambda \iN \Hom(\one,H)$ and right cointegral $\lambda
\iN \Hom(H,\one)$ such that $\lambda \cir \Lambda \in\End_\C(\one)$ is invertible
(see e.g.\ appendix A.2 of \cite{fuSc17}).

The Frobenius algebra structure given by \erf{DeltaA-epsA} is unique up to
rescaling the integral $\Lambda$ by an invertible scalar. In the sequel, for a
given choice of (non-zero) $\Lambda$, we choose the cointegral $\lambda$
such that $\lambda \cir \Lambda \eq 1$.


\subsection{\HBimod\ as a monoidal category}

Our focus in this paper is on natural structures on a distinguished
$H$-bimodule, the \bicom\ to be described below. To formulate these
we need to endow the abelian category \HBimod\ of $H$-bimodules with the structure
of a sovereign braided monoidal category.

The objects of the \ko-linear abelian category \HBimod\ of bimodules over
a Hopf \ko-algebra $H$ are triples $(X,\rho,\ohr)$ such that $(X,\rho)$ is a
left $H$-module and $(X,\ohr)$ is a right $H$-module and the left and right
actions of $H$ commute, $\rho\cir(\id_H\oti\ohr) \eq \ohr\cir(\rho\oti\id_H)$.
Morphisms are \ko-linear maps commuting with both actions.
We denote the morphism spaces of \HMod\ and \HBimod\ by
$\HomH(-{,}-)$ and $\HomHH(-{,}-)$, respectively, while
$\Hom(-{,}-) \,{\equiv}\, \Homk(-{,}-)$ is reserved for \ko-linear maps.

Just like the bimodules over any unital associative algebra, \HBimod\ carries a
monoidal structure for which the tensor product is the one over $H$, for which
the vector space underlying a tensor product bimodule $X \,{\otimes}_H Y$ is a
non-trivial quotient of the vector space tensor product $X \oti Y \,{\equiv}\,
X \otik Y$. But for our purposes, we need instead a different monoidal
structure on \HBimod\ for which also the coalgebra structure of $H$ is relevant.
This is obtained by pulling back the natural $H\oti H$-bi\-module structure on
$X \oti Y$ along the coproduct to the structure of an $H$-bi\-module. Thus if
$(X,\rho_X,\ohr_X)$ and $(Y,\rho_Y,\ohr_Y)$ are $H$-bimodules, then their
tensor product is $X \oti Y$ together with the left and right actions
   \be
   \bearl
   \rho_{X\otimes Y} := (\rho_X \oti \rho_Y) \circ (\id_H \oti \tau_{H,X} \oti
                        \id_Y) \circ (\Delta \oti \id_X \oti \id_Y)  \qquand
   \Nxl3
   \ohr_{X\otimes Y} := (\ohr_X \oti \ohr_Y) \circ (\id_X \oti \tau_{Y,H} \oti
                        \id_H) \circ (\id_X \oti \id_Y \oti \Delta) 
   \eear
   \labl{def-tp}
of $H$. The monoidal unit for this tensor product is the one-dimensional vector
space \ko\ with both left and right $H$-action given by the counit,
$\one_{H\text{-Bimod}} \eq (\ko,\eps,\eps)$.

Obviously, \erf{def-tp} is just the standard tensor product
   \be
   (X,\rho_X) \otimes^\HMod (Y,\rho_Y) = \left(\,X\oti Y ,\,
   (\rho_X \oti \rho_Y) \circ (\id_H \oti \tau_{H,X} \oti
                        \id_Y) \circ (\Delta \oti \id_X \oti \id_Y) \,\right)
   \labl{otimesHMod}
of the category \HMod\ of left $H$-modules together with the corresponding
tensor product of the category of right $H$-modules. For both monoidal structures
the ground field \ko, endowed with a left, respectively right, action via the
counit, is the monoidal unit.

If $H$ is a \emph{ribbon} Hopf algebra, then (see e.g.\ Section XIV.6 of
\cite{KAss}) \HMod\ carries the structure of a ribbon category. Analogous
further structure on \HBimod\ will become relevant later on, and we will
introduce it in due course: a braiding on \HBimod\ in Section \ref{sec-braid},
and left and right dualities and a twist in Section \ref{sec-dual}.


\subsection{The \bicom}

We now identify an object of the monoidal category \HBimod\ that is distinguished
by the fact (see Appendix \ref{bicom-coend}) that it can be determined, up to
unique isomorphism,
by a universal property formulated in \HBimod, and thus may be thought of as being
canonically associated with \HBimod\ as a rigid monoidal category. Afterwards we will
endow this object \Hb\ with the structure of a Frobenius algebra in the monoidal
category defined by the tensor product \erf{def-tp}.
As a vector space, \Hb\ is the dual \Hs\ of $H$.

\begin{defi}
The \emph{\bicom} $\Hb\iN\HBimod$ is the vector space \Hs\ endowed
with the dual of the regular left and right actions of $H$ on itself. Explicitly,
   \be
   \Hb = (\Hs,\brho,\bohr) \,,
   \ee
with $\brho \iN \Hom(H\oti\Hs,\Hs)$ and $\bohr \iN \Hom(\Hs\oti H,\Hs)$ given by
   \be
   \bearl
   \brho:= (d_H\oti\idHs) \cir (\idHs\oti m\oti\idHs) \cir (\idHs\oti\apo\oti b_H)
   \cir\tauHHv \qquand
   \nxl2
   \bohr:= (d_H\oti\idHs)\cir(\idHs\oti m\oti\idHs)
   \cir(\idHs\oti\id_H\oti\tauHvH)\cir(\idHs\oti b_H \oti\apoi) \,.
   \eear
   \labl{rhorho}
\end{defi} \smallskip

Expressions involving maps like $\brho$ and $\bohr$ tend to become unwieldy, at
least for the present authors. It is therefore convenient to resort to a
pictorial description. We depict the structure maps of the Hopf algebra $H$ as\,%
  \footnote{~It is worth stressing that these pictures refer to the category \Vectk\
  of \findim\ \ko-vector spaces. Later on, we will occasionally also work with
  pictures for morphisms in more general monoidal categories \,\C; to
  avoid confusion we will mark pictures of the latter type with the symbol
   \,\raisebox{-.55em}{\catpicsm}\,.}
   \Eqpic{graphics} {430} {18} { \put(-12,0){
   \put(0,18)       {$ m~= $}
   \put(33,7)  { \includepichopf{a21}
   \put(-3.5,-8.8)  {\sse$ H $}
   \put(9.5,37.5)   {\sse$ H $}
   \put(20.2,-8.8)  {\sse$ H $}
   }
   \put(96,18)      {$ \eta~= $}
   \put(124,7) { \includepichopf{a22}
   \put(-1.7,25.8)  {\sse$ H $}
   }
   \put(172,18)     {$ \Delta~= $}
   \put(204,7) { \includepichopf{a23}
   \put(-2.5,37.5)  {\sse$ H $}
   \put(8.1,-8.8)   {\sse$ H $}
   \put(20.8,37.5)  {\sse$ H $}
   }
   \put(268,18)     {$ \eps~= $}
   \put(295,7) { \includepichopf{a24}
   \put(-1.9,-8.8)  {\sse$ H $}
   }
   \put(349,18)     {$ \apo~= $}
   \put(382,9) {\Includepichtft{95a}
   \put(-1.9,-8.8)  {\sse$ H $}
   \put(-1.7,27.2)  {\sse$ H $}
   }
   \put(420,18)     {$ \apoi~= $}
   \put(463,9) {\Includepichtft{95b}
   \put(-1.9,-8.8)  {\sse$ H $}
   \put(-1.7,27.2)  {\sse$ H $}
   } } }
the integral and cointegral as
   \eqpic{pic-int-coint} {130} {12} {
   \put(0,19)       {$ \Lambda ~= $}
   \put(37,0)  {\Includepichtft{92a}
   \put(3,41.1)     {\sse$ H $}
   }
   \put(100,19)     {$ \lambda ~= $}
   \put(137,0)  {\Includepichtft{92b}
   \put(-.7,-8.8)   {\sse$ H $}
   } }
and the evaluation and coevaluation maps, dual maps, and flip maps of \Vectk\ as
   \Eqpic{graphics2} {430} {27} { \put(-14,11){
   \put(0,18)       {$ d_V^{}~= $}
   \put(38,7)  { \includepichopf{a16}
   \put(-3.3,-8.8)  {\sse$ V^*_{} $}
   \put(19.2,-8.8)  {\sse$ V $}
   }
   \put(96,18)      {$ b_V^{}~= $}
   \put(134,-4) { \includepichopf{a15}
   \put(-2.2,38.6)  {\sse$ V $}
   \put(18.9,38.6)  {\sse$ V^*_{} $}
   }
   \put(192,18)     {$ f_{}^\vee~= $}
   \put(230,0) { \includepichopf{a08}
   \put(-3.8,-8.8)  {\sse$ W^*_{} $}
   \put(20.4,25.7)  {\tiny$f$}
   \put(39.4,56.7)  {\sse$ V^*_{} $}
   }
   \put(309,18)     {$ \tau_{V,W}^{}~= $}
   \put(356,0) { \includepichopf{a11}
   \put(-2.8,-8.8)  {\sse$ V $}
   \put(-2.8,45.5)  {\sse$ W $}
   \put(14.7,-8.8)  {\sse$ W $}
   \put(15.8,45.5)  {\sse$ V $}
   }
   \put(390,18)     {$ =~\tau_{W,V}^{-1}~= $}
   \put(456,0) {\includepichopf{a12}
   \put(-2.8,-8.8)  {\sse$ V $}
   \put(-2.8,45.5)  {\sse$ W $}
   \put(14.7,-8.8)  {\sse$ W $}
   \put(15.8,45.5)  {\sse$ V $}
   } } }
The left-pointing arrows in the pictures for the evaluation and
coevaluation indicate that they refer to the \emph{right} duality of \Vectk.
The evaluation and coevaluation $\tilde d$ and $\tilde b$ for the \emph{left}
duality of \Vectk\ are analogously drawn with arrows pointing to the right.
Also, for better readability we indicate the flip by either an over- or
underbraiding, even though in the present context of the \emph{symmetric}
monoidal category \Vectk\ both of them describe the same map.

\smallskip

In this graphical description the left and right actions \erf{rhorho} of $H$
on \Hb\ are given by
   \eqpic{rhoHb,ohrHb} {130} {41} {
   \put(0,0)   {\Includepichtft{67b}
   \put(-4,-8.8)    {\sse$ H $}
   \put(5,56)       {\sse$ \brho$}
   \put(13,-8.8)    {\sse$ \Hss $}
   \put(14.3,84.9)  {\sse$ \Hss $}
   \put(23,43)      {\sse$ \bohr$}
   \put(32,-8.8)    {\sse$ H $}
   }
   \put(58,38)      {$ =$}
   \put(88,0) { \Includepichtft{67a}
   \put(-5,9)       {\sse$ \apo $}
   \put(-3,-8.8)    {\sse$ H $}
   \put(10,-8.8)    {\sse$ \Hss $}
   \put(42.3,93.3)  {\sse$ \Hss $}
   \put(54,-8.8)    {\sse$ H $}
   \put(61.6,16.2)  {\sse$ \apoi $}
   } }

Let us also mention that the \emph{Frobenius map} $\fmap\colon H \To \Hs$
and its inverse $\fmap^{-1}\colon \Hss \To H$ are given by
$\fmap(h) \eq \lambda \,{\leftharpoonup}\, \apo(h)$ and
$\fmap^{-1}(p) \eq \Lambda \,{\leftharpoonup}\, p$, respectively
(see e.g.\ \cite{coWe6}), i.e.
   \eqpic{def_fmap} {350} {47} {
   \put(1,40)     {$ \fmap ~= $}
   \put(39,0) {  \Includepichopf{26}
   \put(-1.8,-8.5){\sse$ H $}
   \put(5.9,32.4) {\sse$ \apo $}
   \put(23.6,77)  {\sse$ \lambda $}
   \put(48.7,109) {\sse$ \Hss $}
   \put(107,40)   {and}
   }
  \put(226,0){
   \put(0,40)     {$ \fmap^{-1} ~= $}
   \put(49,0) {  \Includepichopf{51}
   \put(-4.1,-8.5){\sse$ \Hss $}
   \put(41.5,10.2){\sse$ \Lambda $}
   \put(47.5,109) {\sse$ H $}
   } } }
The statement that $H$ is a Frobenius algebra (see Lemma \ref{H-A}) is equivalent
to the invertibility of $\fmap$. That the two maps \erf{def_fmap} are indeed
each others' inverses means that
   \eqpic{fmap_inv} {165} {38} { \setlength\unitlength{.8pt}
   \put(0,0)   {\includepichopf{39a}}
   \put(-5.2,39) {\sse$\apo$}
   \put(-3,-10.5) {\sse$H$}
   \put(23.1,84) {\sse$\lambda$}
   \put(47.2,17) {\sse$\Lambda$}
   \put(49,116)  {\sse$H$}
   \put(83,50)   {$=$}
   \put(120,0) {\includepichopf{39b}
   \put(-4.5,-10.5){\sse$H$}
   \put(-3.6,116){\sse$H$}
   }
   \put(150,50)  {$=$}
   \put(190,0) {\includepichopf{39c}
   \put(-3.8,116){\sse$H$}
   \put(46,-10.5) {\sse$H$}
   } }


\subsection{Morphisms for algebraic structure on the bimodule \Hb}

We now introduce the morphisms that endow the object \Hb\ with
the structure of a Frobenius algebra in the monoidal category
\HBimod. Very much like the \bicom\ \Hb\ itself, the algebra structure
on \Hb\ is a consequence of the universal properties of the coend
of a functor $\Fbx\colon \HMod\op \Times \HMod \To \HBimod$
(see Appendix \ref{bicom-coend}). Analogous coends with similar properties
can be introduced for any rigid braided monoidal category, so that the Frobenius
algebra \Hb\ can be thought of as being canonically associated with the
(abstract) monoidal category \HMod\ and the functor $\Fbx$.

\begin{defi}
For $H$ a \findim\ Hopf algebra, we introduce the following linear maps
$m\bico \colon \Hs\oti\Hs\to\Hs$, $\eta\bico \colon \ko\to\Hs$,
$\Delta\bico \colon \Hs\to\Hs\oti\Hs$ and $\eps\bico \colon \Hs\to\ko$:
   \be
   \bearl
   m\bico := {\Delta^{\phantom:}}^{\!\!\!\wee_{}} \,, \qquad
   \eta\bico := \eps^\wee \,,
   \nxl4
   \Delta\bico := {[ (\id_H \oti (\lambda\cir m)) \cir (\id_H\oti\apo\oti\id_H)
   \cir (\Delta\oti\id_H) ]}^\wee \,, \qquad
   \eps\bico := {\Lambda^{\phantom:}}^{\!\!\!\wee} \,.
   \eear
   \labl{def-Hb-Frobalgebra}
\end{defi} \smallskip

Again the graphical description appears to be convenient:
   \Eqpic{pic-Hb-Frobalgebra} {440} {45} { \put(0,-3){
   \put(0,45)       {$ m\bico~= $}
   \put(48,0)  {\Includepichtft{79a}
   \put(-5.9,-8.8)  {\sse$\Hss $}
   \put(6.5,-8.8)   {\sse$\Hss $}
   \put(49.7,106.8) {\sse$\Hss $}
   }
   \put(142,45)     {$ \eta\bico~= $}
   \put(182,24) {\Includepichtft{81j}
   \put(10.7,44.1)  {\sse$\Hss $}
   }
   \put(238,45)     {$ \Delta\bico~= $}
   \put(286,0) {\Includepichtft{82a}
   \put(-4.3,-8.8)  {\sse$\Hss $}
   \put(21.7,71)    {\sse$\lambda$}
   \put(48.2,89.4)  {\sse$\Hss $}
   \put(61.1,89.4)  {\sse$\Hss $}
   }
   \put(390,45)     {$ \eps\bico~= $}
   \put(430,24) {\Includepichtft{82b}
   \put(-4.3,-8.8)  {\sse$\Hss $}
   \put(15.6,16.3)  {\sse$\Lambda$}
   } } }

We would like to interpret the maps \erf{pic-Hb-Frobalgebra} as the structural
morphisms of a Frobenius algebra in \HBimod. To this end we must first show that
these maps are actually morphisms of bimodules. We start with a few general
observations.

\begin{lem}
(i)\, For any Hopf algebra $H$ we have
   \eqpic{Hopf_Frob_trick} {430} {47} {
   \put(0,0)   {\Includepichtft{84a}
   \put(-3,-8.5) {\sse$H$}
   \put(4,107)   {\sse$H$}
   \put(15,-8.5) {\sse$H$}
   \put(21,107)  {\sse$H$}
   }
   \put(50,47)   {$=$}
   \put(76,0) { \Includepichtft{84b}
   \put(8,-8.5)  {\sse$H$}
   \put(20,107)  {\sse$H$}
   \put(30,-8.5) {\sse$H$}
   \put(37,107)  {\sse$H$}
   }
   \put(147,47)  {$=$}
   \put(178,0)   {\Includepichtft{84c}
   \put(1,-8.5)  {\sse$H$}
   \put(19.7,107){\sse$H$}
   \put(31,-8.5) {\sse$H$}
   \put(37,107)  {\sse$H$}
   }
   \put(246,47)  {$=$}
   \put(274,0) { \Includepichtft{84d}
   \put(1,-8.5)  {\sse$H$}
   \put(8,107)   {\sse$H$}
   \put(27.7,107){\sse$H$}
   \put(33,-8.5) {\sse$H$}
   }
   \put(347,47)  {$=$}
   \put(375,0) { \Includepichtft{84e}
   \put(1,-8.5)  {\sse$H$}
   \put(8,107)   {\sse$H$}
   \put(19,-8.5) {\sse$H$}
   \put(24,107)  {\sse$H$}
   } }
(ii)\, Further, if $H$ is unimodular with integral $\Lambda$, we have
   \eqpic{Hopf_Frob_trick2} {390} {39} {
   \put(0,0) {\Includepichtft{84f} \put(2.2,0){
   \put(-4.2,-8.5) {\sse$ H $}
   \put(2.2,92)  {\sse$ H $}
   \put(27.7,10) {\sse$ \Lambda $}
   \put(31,92)   {\sse$ H $}
   } }
   \put(60,43)   {$ = $}
   \put(86,0) { \Includepichtft{84g}
   \put(-4.2,-8.5) {\sse$ H $}
   \put(6.6,92)  {\sse$ H $}
   \put(26.6,92) {\sse$ H $}
   \put(27.7,10) {\sse$ \Lambda $}
   }
   \put(174,43)  {and}
   \put(240,0) { \Includepichtft{84h}
   \put(-3.2,92) {\sse$ H $}
   \put(15.7,10) {\sse$ \Lambda $}
   \put(26.1,92) {\sse$ H $}
   \put(29.8,-8.5) {\sse$ H $}
   }
   \put(300,43)  {$ = $}
   \put(326,0) { \Includepichtft{84i}
   \put(1.2,92)  {\sse$ H $}
   \put(15.7,10) {\sse$ \Lambda $}
   \put(21.7,92) {\sse$ H $}
   \put(29.8,-8.5) {\sse$ H $}
   } }
\end{lem}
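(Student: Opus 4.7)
My approach is to prove both chains of equalities by local diagrammatic manipulations that rewrite one configuration into the next using only the standard Hopf-algebra axioms. The tools I expect to need are coassociativity and associativity; the antipode axiom $m \circ (\apo \otimes \id_H) \circ \Delta = \eta \circ \eps = m \circ (\id_H \otimes \apo) \circ \Delta$; the fact that $\Delta$ is an algebra homomorphism (equivalently, $m$ is a coalgebra homomorphism); and the property that $\apo$ is both an anti-algebra and an anti-coalgebra morphism. For part (ii) I also invoke the unimodularity hypothesis that the left integral $\Lambda$ is simultaneously a right integral, so that $\apo \circ \Lambda = \Lambda$.

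For part (i) the five pictures all represent morphisms $H \otimes H \To H \otimes H$ built from products, coproducts and antipodes in different orderings. I would proceed along the chain one equality at a time. At each step the trick is to insert a compensating pair $\apo \circ \apoi = \id_H$ on a suitable strand (or cancel such a pair) and then apply the antipode axiom to contract the resulting antipode-loop to $\eta \circ \eps$; this forces one branch of the diagram to collapse into the adjacent configuration. A Hopf-algebra bialgebra identity of the form $\Delta \circ m = (m \otimes m) \circ (\id_H \otimes \tauHH \otimes \id_H) \circ (\Delta \otimes \Delta)$, together with the anti-morphism property of $\apo$ used to slide antipodes past products and coproducts, is enough to bridge all four intermediate rewrites. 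I would expect the ``most symmetric'' middle picture to serve as a pivot between the two outer forms, with one antipode being absorbed on each side.

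For part (ii) the key input is the two-sided-integral identity $m \circ (\Lambda \otimes \id_H) = \Lambda \circ \eps = m \circ (\id_H \otimes \Lambda)$, which holds because $H$ is unimodular. My plan is to first use part (i) (or the same local rewrites applied with $\Lambda$ grafted onto one of the input strands) to bring the integral into a position where it meets a multiplication node. At that point the two-sided-integral identity lets me replace the product-with-integral by a counit-strand composed with $\Lambda$, which is exactly the configuration on the right-hand side of the displayed equations. The second equality in (ii) follows by the mirror-image argument, interchanging left and right actions, together with the identity $\apo \circ \Lambda = \Lambda$ whenever an antipode has to be pushed through the integral.

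The main obstacle I anticipate is not conceptual but combinatorial: in diagrams involving several product, coproduct, and antipode nodes the individual $H$-strands have to be tracked carefully through each local rewrite, and one must check that the chosen manipulation is genuinely a planar move in the surrounding diagram rather than a merely suggestive one. To mitigate this I would record each intermediate picture explicitly, label its input and output strands, and verify that each equality in the chain is justified by a single clearly identified Hopf-algebra axiom. Once that bookkeeping is in place the proof reduces to a routine sequence of rewrites and no further subtlety arises.
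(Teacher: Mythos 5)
Your proposal is correct and follows essentially the same route as the paper: part (i) by a chain of local rewrites using the antipode/unit/counit axioms, (co)associativity, the anti-(co)algebra morphism property of $\apo$ and the bialgebra connecting axiom, and part (ii) by grafting $\Lambda$ onto an input of (i) (respectively of its left-right mirror image) and invoking the left, respectively right, integral property guaranteed by unimodularity. The only cosmetic difference is your optional insertion of $\apo\cir\apoi$ pairs and the mention of $\apo\cir\Lambda\eq\Lambda$, which the paper's argument for (ii) does not actually need.
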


\begin{proof}
(i)\, The first equality holds by the defining properties of the antipode, unit and
counit of $H$. The second equality follows by associativity and coassociativity,
the third by the anti-coalgebra morphism property of the antipode, and the last
by the connecting axiom for product and coproduct of the bialgebra underlying $H$.
\nxl1
(ii)\, The first equality in \erf{Hopf_Frob_trick2} follows by composing
\erf{Hopf_Frob_trick} with $\id_H\oti\Lambda$ and using that $\Lambda$ is a left
integral. The second equality in \erf{Hopf_Frob_trick2} follows by composing the
left-right-mirrored version of \erf{Hopf_Frob_trick} (which is proven in the same
way as in (i)) with $\Lambda\oti\id_H$ and using that $\Lambda$ is a right integral.
\end{proof}

We will refer to the equality of the left and right hand sides of
\erf{Hopf_Frob_trick} as the \emph{Hopf-Frobenius trick}.

\begin{lem}\label{Delta=delp}
The map $\Delta\bico$ introduced in \erf{pic-Hb-Frobalgebra} can alternatively be
expressed as $\Delta\bico \eq \delp$ with
   \eqpic{Deltaprime} {133} {36} {
   \put(0,35)    {$ \delp ~:=$}
   \put(68,0) {\Includepichtft{87a}
   \put(-5,-8.5) {\sse$\Hss $}
   \put(21,62)   {\sse$\lambda$}
   \put(41,91)   {\sse$\Hss $}
   \put(53,91)   {\sse$\Hss $}
   } }
\end{lem}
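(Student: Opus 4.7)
The plan is to verify the equality of the two linear maps $\Hss \To \Hss\oti\Hss$ by reducing it to a graphical identity between their \emph{dual} maps $H\oti H \To H$ (equivalently, by pairing both sides against two test elements of $H$), where the Hopf-Frobenius trick \erf{Hopf_Frob_trick} directly applies. Since we are working in \Vectk, equality of the two morphisms is equivalent to equality of their transposes, and the transpose of $\Delta\bico$ is given \emph{by construction} as
\[
 (\id_H \oti (\lambda\cir m)) \cir (\id_H\oti\apo\oti\id_H) \cir (\Delta\oti\id_H)
 \colon H\oti H \To H \,.
\]
So the first step is to draw the graphical picture for this composite and, separately, the picture for the transpose of $\delp$ obtained by ``reversing'' the diagram \erf{Deltaprime} (turning each $\Hss$-leg into an $H$-leg, replacing coevaluations by evaluations and vice versa, and reversing the direction of the antipodes).

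The second and main step is to bring these two pictures into coincidence by applying the Hopf-Frobenius trick. Concretely, the picture for the transpose of $\Delta\bico$ contains, near its outputs, the subdiagram $(\id_H\oti m)\cir(\apo\oti\apo\oti\id_H)\cir(\tauHH\oti\id_H)\cir(\Delta\oti\id_H)$ (up to placement of the final $\lambda$); this is precisely the kind of subdiagram that figures in one of the five equivalent forms in \erf{Hopf_Frob_trick}. Rewriting the subdiagram via the appropriate equality in \erf{Hopf_Frob_trick} repositions the antipode and turns the coproduct-then-multiply branch into the form in which $\delp$ is drawn. The final $\lambda$ then sits exactly as in \erf{Deltaprime}.

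The main obstacle will be bookkeeping: identifying \emph{which} of the five equivalent pictures in \erf{Hopf_Frob_trick} matches the subdiagram that actually appears (the natural choice seems to be the step from the second to the fourth picture, which moves an antipode across a (co)product), and keeping track of the left/right action conventions and of the flip maps $\tauHH$ that convert between $\bohr$ and $\brho$. A minor but useful auxiliary observation will be that, since both sides are manifestly built from $\Delta$, $m$, $\apo$ and $\lambda$ alone, no appeal to the integral $\Lambda$ or to unimodularity is needed, so only part (i) of the preceding lemma enters; part (ii) is reserved for later compatibility checks (e.g.\ bimodule property and Frobenius axioms).

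Once the two pictures coincide, equality of $\Delta\bico$ and $\delp$ as morphisms in \Vectk\ follows, and hence \emph{a fortiori} as morphisms in \HBimod\ once one has verified (separately, in the subsequent bimodule-morphism check) that either expression is $H$-bilinear.
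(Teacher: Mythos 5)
There is a genuine gap in your main step. The Hopf--Frobenius trick \erf{Hopf_Frob_trick} is an identity built purely from $m$, $\Delta$, $\apo$, $\eta$, $\eps$; if the two sides of $\Delta\bico \eq \delp$ differed only by such a rewriting, with ``the final $\lambda$'' sitting passively on top, the equality would hold with $\lambda$ replaced by \emph{any} linear form on $H$. It does not: the pre-dual of $\Delta\bico$ is the map $h\oti k \,\mapsto\, h_{(1)}\,\lambda\big(\apo(h_{(2)})\,k\big)$, whose output sits on the first tensor leg, while the pre-dual of $\delp$ has its output on the second leg; testing with $\lambda \eq \eps$ the former becomes $h\,\eps(k)$ and the latter an expression of the form $\eps(h)\,k$, which are different. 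So no amount of bookkeeping among the five pictures in \erf{Hopf_Frob_trick} can close the argument; some special property of $\lambda$ must be invoked. Your auxiliary remark that ``only part (i) of the preceding lemma enters'' and that no appeal to $\Lambda$ is needed is therefore wrong as stated: what is indispensable is either the defining right-cointegral property $(\lambda\oti\id_H)\cir\Delta \eq \eta\cir\lambda$ of $\lambda$, or (as the paper does) the invertibility relations \erf{fmap_inv} of the Frobenius map, which do bring in the integral $\Lambda$ as a catalyst.

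For comparison, the paper's proof dualizes exactly as you propose, but then runs the chain \erf{copr_equal} whose outer equalities insert and remove the composites $\fmap^{-1}\cir\fmap$ and $\fmap\cir\fmap^{-1}$ from \erf{fmap_inv} (so $\Lambda$ appears in the intermediate pictures and disappears again), with coassociativity of $\Delta$ in between. A $\Lambda$-free variant of your plan is in fact salvageable: apply the cointegral property to $\apo(h_{(2)})\,k$, use that $\apo$ is an anti-coalgebra map, and then cancel $h_{(1)}\apo(h_{(2)})$ by the antipode axiom to move the surviving leg from the first to the second factor -- but this is a use of the cointegral property of $\lambda$, not of \erf{Hopf_Frob_trick} alone, and your proposal as written omits it.
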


\begin{proof}
Using the two equalities in \erf{fmap_inv} and coassociativity of $\Delta$ we obtain
   \eqpic{copr_equal} {250} {36} {
   \put(0,0)   {\Includepichtft{87b}
   \put(-3,91)   {\sse$H$}
   \put(2,-8.5)  {\sse$H$}
   \put(19,-8.5) {\sse$H$}
   }
   \put(45,35)   {$=$}
   \put(70,0)  {\Includepichtft{87c}
   \put(-2.6,-8.5) {\sse$H$}
   \put(16,91)   {\sse$H$}
   \put(37,-8.5) {\sse$H$}
   }
   \put(130,35)  {$=$}
   \put(160,0) {\Includepichtft{87d}
   \put(-2.6,-8.5) {\sse$H$}
   \put(21,91)   {\sse$H$}
   \put(37,-8.5) {\sse$H$}
   }
   \put(220,35)  {$=$}
   \put(250,0) {\Includepichtft{87e}
   \put(-2.6,-8.5) {\sse$H$}
   \put(13,-8.5) {\sse$H$}
   \put(21,91)   {\sse$H$}
   } }
Dualizing the expressions on the left and right hand sides of \erf{copr_equal}
establishes the claimed equality.
\end{proof}

\begin{prop}
When \HBimod\ is endowed with the tensor product \erf{def-tp}, \ko\ is given
the structure of the trivial $H$-bimodule $\ko_\eps \eq (\ko,\eps,\eps)$
(the monoidal unit of \HBimod) and \Hs\ the $H$-bimodule structure \erf{rhorho},
then the maps \erf{def-Hb-Frobalgebra} are morphisms of $H$-bimodules.
\end{prop}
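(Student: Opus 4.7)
The plan is to verify, for each of the four structural maps $m\bico$, $\eta\bico$, $\Delta\bico$ and $\eps\bico$, the two compatibility equations expressing that it intertwines the left action $\brho$ and the right action $\bohr$ on source and target. Since the action $\bohr$ is obtained from $\brho$ by left/right mirror-reflection together with the replacement $\apo\leftrightarrow\apoi$ (compare the two halves of \erf{rhoHb,ohrHb}), in each case the right-action check is the mirror of the left-action check, so in practice only four graphical identities have to be established. It is also convenient throughout to dualise once using the (co)evaluation of \Vectk\ and argue directly about maps built from $m,\eta,\Delta,\eps,\apo,\Lambda,\lambda$ on $H$; with the tensor product \erf{def-tp} this turns the bimodule-morphism property into identities of the kind familiar from Hopf-algebra diagrammatics.

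For the unit $\eta\bico \eq \eps^\wee$, I would unfold the bimodule-morphism condition to the requirement that $\eps\iN\Hs$ is $\brho$- and $\bohr$-invariant; this reduces immediately to $\eps$ being a unital algebra morphism together with $\eps\cir\apo \eq \eps$, both of which are Hopf-algebra axioms. For the counit $\eps\bico \eq \Lambda^\wee$, the same dualisation converts the bimodule property to the characterising equations of a two-sided integral, $m\cir(\id_H\oti\Lambda) \eq \Lambda\cir\eps \eq m\cir(\Lambda\oti\id_H)$, which are available because a factorizable ribbon Hopf algebra is unimodular.

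For the product $m\bico \eq \Delta^\wee$, the bimodule-map identity becomes, after dualising, the requirement that $\Delta\colon H\To H\oti H$ intertwines the left regular action on the target with the left action on $H\oti H$ obtained by first applying $\Delta$ and then acting diagonally (and likewise on the right). This is exactly the bialgebra axiom $\Delta\cir m \eq (m\oti m)\cir(\id\oti\tauHH\oti\id)\cir(\Delta\oti\Delta)$, combined with the anti-coalgebra property $\Delta\cir\apo \eq \tauHH\cir(\apo\oti\apo)\cir\Delta$ to handle the occurrences of $\apo$ in \erf{rhorho}; no integrals are needed here.

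The coproduct $\Delta\bico$ is the main obstacle and is the step that really uses all the structure. I would work with the symmetric form $\delp$ of Lemma~\ref{Delta=delp} rather than the one in \erf{pic-Hb-Frobalgebra}, so that left/right symmetry is manifest. Checking that $\delp$ commutes with the left $H$-action is a purely graphical argument: the action line has to be transported through the two coproducts, the $\apo$, the $\lambda$ and the $\Lambda$ appearing in \erf{Deltaprime}. The key moves are the Hopf--Frobenius trick \erf{Hopf_Frob_trick}, which lets one re-route pairs of input strands through a $\Delta$ and a $\apo$, and the integral-decorated forms \erf{Hopf_Frob_trick2}, which use unimodularity of $H$ to slide the integral $\Lambda$ past both ends of an action line. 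After these reshufflings, (co)associativity and the antipode axioms collapse the diagram to the one obtained by acting on the $\Hss$-strand on the left of $\delp$. The right-action check is the mirror argument, using the mirrored form of \erf{Hopf_Frob_trick2}. Putting the four cases together yields the claim.
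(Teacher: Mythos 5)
Your handling of three of the four maps is sound and essentially coincides with the paper's proof: $\eta\bico$ reduces to $\eps\cir m\eq\eps\oti\eps$ and $\eps\cir\apo\eq\eps$, $\eps\bico\eq\Lambda^\wee$ reduces to $\Lambda$ being a two-sided integral (unimodularity), and $m\bico\eq\Delta^\wee$ reduces to the connecting axiom plus the anti-(co)algebra property of the antipode; for these maps the left and right checks are indeed essentially reflections of one another.

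The gap is in your treatment of $\Delta\bico$, precisely at the claim that ``the right-action check is the mirror argument.'' Neither $\Delta\bico$ as defined in \erf{def-Hb-Frobalgebra} nor $\delp$ from \erf{Deltaprime} is invariant under the left-right reflection combined with $\apo\leftrightarrow\apoi$: both contain the cointegral $\lambda$ inserted on one definite side, and $\lambda$ is only a \emph{right} cointegral. The mirror of your left-action computation is therefore a valid identity only after replacing $\lambda$ by a left cointegral (such as $\lambda\cir\apo$), and since the distinguished group-like element $g\eq\uvi^2$ of a unimodular factorizable ribbon Hopf algebra need not be trivial, $\lambda$ is in general not a left cointegral; so the mirrored argument establishes the right-module property of a \emph{different} map, not of $\delp$. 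Unimodularity in the form of \erf{Hopf_Frob_trick2} (two-sidedness of $\Lambda$) does not repair this, because the obstruction sits in $\lambda$, not in $\Lambda$. What is actually needed is the cointegral identity $\lambda\cir m \eq \lambda\cir m\cir\tauHH\cir(\id_H\oti\apo^2)$ of \erf{lambda-Oapoo}, used in the form $\lambda\cir m\cir((\apo\cir m)\oti\id_H) \eq \lambda\cir m\cir[\apo\oti(m\cir\tauHH\cir(\apoi\oti\id_H))]$; this is exactly the ingredient the paper invokes in step (iv) of its proof, where the left-module property is checked on the original form of $\Delta\bico$ via the Hopf--Frobenius trick \erf{Hopf_Frob_trick}, the right-module property is checked on $\delp$ via the chain \erf{Deltab_right} using the identity above, and Lemma \ref{Delta=delp} bridges the two presentations. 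Without this (or an equivalent use of the cointegral's behaviour under $\apo^2$), the right-module property of the coproduct remains unproved in your proposal.
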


\begin{proof}
(i)~\,That $m\bico$ is a morphism of left $H$-modules is seen as
   \Eqpic{mb_left} {450} {47} { \put(0,3){
   \put(0,0)   {\Includepichtft{79b}
   \put(-4,-8.5) {\sse$H$}
   \put(13.8,-8.5) {\sse$\Hss $}
   \put(27.8,-8.5) {\sse$\Hss $}
   \put(70.6,107){\sse$\Hss $}
   }
   \put(88,47)   {$=$}
   \put(93,0) { \Includepichtft{79c}
   \put(-4,-8.5) {\sse$H$}
   \put(9.5,-8.5){\sse$\Hss $}
   \put(23,-8.5) {\sse$\Hss $}
   \put(70.6,107){\sse$\Hss $}
   }
   \put(184,47)  {$=$}
   \put(193,0)   {\Includepichtft{79d}
   \put(-4,-8.5) {\sse$H$}
   \put(9.5,-8.5){\sse$\Hss $}
   \put(23,-8.5) {\sse$\Hss $}
   \put(75.6,107){\sse$\Hss $}
   }
   \put(284,47)   {$=$}
   \put(288,0) { \Includepichtft{79e}
   \put(-4,-8.5) {\sse$H$}
   \put(9.5,-8.5){\sse$\Hss $}
   \put(23,-8.5) {\sse$\Hss $}
   \put(75.8,107){\sse$\Hss $}
   }
   \put(383,47)   {$=$}
   \put(394,0) { \Includepichtft{79f}
   \put(1,-8.5)  {\sse$H$}
   \put(14.5,-8.5) {\sse$\Hss $}
   \put(28,-8.5) {\sse$\Hss $}
   \put(70.2,107){\sse$\Hss $}
   } } }
Here the first and last equalities just implement the definition \erf{rhoHb,ohrHb}
of the $H$-action, the second is the connecting axiom of $H$, and the third the
anti-algebra morphism property of the antipode.
\\
Similarly, that $m\bico$ is also a right module morphism follows as
   \Eqpic{mb_right} {450} {49} { \put(0,8){
   \put(0,0)   {\Includepichtft{80a}
   \put(-7,-8.5) {\sse$\Hss $}
   \put(6.6,-8.5){\sse$\Hss $}
   \put(66,-8.5) {\sse$H$}
   \put(50.2,107){\sse$\Hss $}
   }
   \put(84,47)   {$=$}
   \put(106,0) { \Includepichtft{80b}
   \put(-7,-8.5) {\sse$\Hss $}
   \put(6.6,-8.5){\sse$\Hss $}
   \put(53,-8.5) {\sse$H$}
   \put(55.8,107){\sse$\Hss $}
   }
   \put(181,47)  {$=$}
   \put(207,0)   {\Includepichtft{80c}
   \put(-7,-8.5) {\sse$\Hss $}
   \put(6.6,-8.5){\sse$\Hss $}
   \put(58,-8.5) {\sse$H$}
   \put(65.2,107){\sse$\Hss $}
   }
   \put(286,47)  {$=$}
   \put(305,0) { \Includepichtft{80d}
   \put(-7,-8.5) {\sse$\Hss $}
   \put(6.5,-8.5){\sse$\Hss $}
   \put(58,-8.5) {\sse$H$}
   \put(63.2,107){\sse$\Hss $}
   }
   \put(388,47)  {$=$}
   \put(409,0) { \Includepichtft{80e}
   \put(-7,-8.5) {\sse$\Hss $}
   \put(6.5,-8.5){\sse$\Hss $}
   \put(37,-8.5) {\sse$H$}
   \put(50.2,107){\sse$\Hss $}
   } } }
(ii)~\,That $\eta\bico$ is a left and right module morphism follows
with the help of the properties $\eps\cir m \eq \eps\oti\eps$ and
$\eps\cir\apo\eq\eps$ of the antipode. We have
   \eqpic{etab_left} {270} {33} {
   \put(0,0)   {\Includepichtft{81a}
   \put(-4,-8.5) {\sse$H$}
   \put(11.6,84) {\sse$ \Hss $}
   }
   \put(35,32)   {$=$}
   \put(60,0) { \Includepichtft{81b}
   \put(-3,-8.5) {\sse$H$}
   \put(22,84)   {\sse$ \Hss $}
   }
   \put(112,32)  {$=$}
   \put(135,0)   {\Includepichtft{81c}
   \put(-4,-8.5) {\sse$H$}
   \put(61,84)   {\sse$ \Hss $}
   }
   \put(218,32)   {$=$}
   \put(241,0) { \Includepichtft{81d}
   \put(-4,-8.5) {\sse$H$}
   \put(20,84)   {\sse$ \Hss $}
   \put(25,69)   {\sse$ \brho $}
   } }
and
   \eqpic{etab_right} {340} {32} {
   \put(0,0)   {\Includepichtft{81e}
   \put(11.7,84) {\sse$ \Hss $}
   \put(14,-8.5) {\sse$H$}
   }
   \put(37,32)   {$=$}
   \put(60,0) { \Includepichtft{81f}
   \put(11.7,84) {\sse$ \Hss $}
   \put(14,-8.5) {\sse$H$}
   }
   \put(107,32)  {$=$}
   \put(135,0)   {\Includepichtft{81g}
   \put(14.7,84) {\sse$ \Hss $}
   \put(20.2,-8.5) {\sse$H$}
   }
   \put(181,32)  {$=$}
   \put(205,0) { \Includepichtft{81h}
   \put(36.2,84) {\sse$ \Hss $}
   \put(41.4,-8.5) {\sse$H$}
   }
   \put(279,32)  {$=$}
   \put(305,0) { \Includepichtft{81i}
   \put(11,84)   {\sse$ \Hss $}
   \put(18.7,69) {\sse$ \bohr $}
   \put(24.5,-8.5) {\sse$H$}
   } }
respectively. This uses in particular the homomorphism property of
the counit $\eps$ of $H$ and the fact that $\eps\cir\apo\eq\eps$.
\nxl1
(iii)~\,Next we apply the Hopf-Frobenius trick \erf{Hopf_Frob_trick},
which allows us to write
   \Eqpic{Deltab_left} {450} {45} {
   \put(0,0)   {\Includepichtft{85a}
   \put(-5,-8.5) {\sse$H$}
   \put(7,-8.5)  {\sse$\Hss$}
   \put(67,107)  {\sse$\Hss$}
   \put(78.5,107){\sse$\Hss$}
   }
   \put(105,47)  {$\equ{Hopf_Frob_trick}$}
   \put(130,0) { \Includepichtft{85b}
   \put(-5,-8.5) {\sse$H$}
   \put(7,-8.5)  {\sse$\Hss$}
   \put(61,107)  {\sse$\Hss$}
   \put(74,107)  {\sse$\Hss$}
   }
   \put(234,47)  {$=$}
   \put(255,0)   {\Includepichtft{85c}
   \put(-5,-8.5) {\sse$H$}
   \put(7,-8.5)  {\sse$\Hss$}
   \put(61,107)  {\sse$\Hss$}
   \put(74.6,107){\sse$\Hss$}
   }
   \put(357,47)  {$=$}
   \put(375,0) { \Includepichtft{85d}
   \put(2,-8.5)  {\sse$H$}
   \put(13,-8.5) {\sse$\Hss$}
   \put(73,107)  {\sse$\Hss$}
   \put(85,107)  {\sse$\Hss$}
   } }
This tells us that $\Delta\bico$ is a left module morphism.
\nxl1
(iv)~\,For establishing the right module morphism property of $\Delta\bico$ we
recall from Lemma \ref{Delta=delp} that $\Delta\bico \eq \delp$ with $\delp$
given by \erf{Deltaprime}. The following chain of equalities shows that $\delp$
is a morphism of right $H$-modules:
   \eqpic{Deltab_right} {430} {113} {
   \put(0,128) {\Includepichtft{86a}
   \put(-6,-8.5) {\sse$\Hss $}
   \put(43,107)  {\sse$\Hss $}
   \put(55,107)  {\sse$\Hss $}
   \put(62,-8.5) {\sse$H$}
   }
   \put(88,175)  {$=$}
   \put(120,128) { \Includepichtft{86b}
   \put(-6,-8.5) {\sse$\Hss $}
   \put(62,-8.5) {\sse$H$}
   \put(61,107)  {\sse$\Hss $}
   \put(73,107)  {\sse$\Hss $}
   }
   \put(227,175) {$=$}
   \put(265,128) {\Includepichtft{86c}
   \put(-6,-8.5) {\sse$\Hss $}
   \put(62,-8.5) {\sse$H$}
   \put(61,107)  {\sse$\Hss $}
   \put(73,107)  {\sse$\Hss $}
   } \put(0,-2){
   \put(50,47)   {$=$}
   \put(75,0) { \Includepichtft{86d}
   \put(-6,-8.5) {\sse$\Hss $}
   \put(62,-8.5) {\sse$H$}
   \put(66,107)  {\sse$\Hss $}
   \put(78,107)  {\sse$\Hss $}
   }
   \put(180,47)  {$=$}
   \put(207,0) { \Includepichtft{86e}
   \put(-6,-8.5) {\sse$\Hs $}
   \put(62,-8.5) {\sse$H$}
   \put(66,107)  {\sse$\Hs$}
   \put(78,107)  {\sse$\Hs$}
   }
   \put(314,47)  {$=$}
   \put(340,0) { \Includepichtft{86f}
   \put(-6,-8.5) {\sse$\Hs $}
   \put(35,-8.5) {\sse$H$}
   \put(50.5,107){\sse$\Hs$}
   \put(62.5,107){\sse$\Hs$}
   } } }
Here the first equality combines the anti-coalgebra morphism property of the
antipode and the connecting axiom, while the second equality uses that
$\lambda\cir m \eq \lambda\cir\tauHH\cir(\id_H\oti\apo^2)$ and hence
   \be
   \lambda\circ m\circ ((\apo\cir m)\oti\id_H)
   = \lambda\circ m\circ [\apo\oti(m\cir\tauHH\cir(\apoi\oti\id_H))] \,,
   \ee
which can be shown (see \cite[p.\,4306]{coWe6}) by using that $H$ is unimodular.
\nxl1
(v)~\,Finally, the proof of the bimodule morphism property for $\eps\bico$ is
similar to the one for $\eta\bico$. The sequence of equalities
   \eqpic{epsb_left} {230} {25} {
   \put(0,0)   {\Includepichtft{83a}
   \put(-4,-8)   {\sse$ H $}
   \put(6,-8)    {\sse$ \Hs $}
   }
   \put(40,32)   {$=$}
   \put(60,0) { \Includepichtft{83b}
   \put(-4,-8)   {\sse$ H $}
   \put(6,-8)    {\sse$ \Hs $}
   }
   \put(113,32)  {$=$}
   \put(137,0)   {\Includepichtft{83c}
   \put(-4,-8)   {\sse$ H $}
   \put(6,-8)    {\sse$ \Hs $}
   }
   \put(181,32)  {$=$}
   \put(205,0) { \Includepichtft{83d}
   \put(-4,-8)   {\sse$ H $}
   \put(6,-8)    {\sse$ \Hs $}
   } }
shows that $\eps\bico$ is a morphism of left modules. Here the second equality 
holds because $\Lambda$ is a left integral. Using that $\Lambda$ is also a right 
integral, one shows analogously that $\eps\bico$ is a morphism of right modules. 
\end{proof}


\subsection{The Frobenius algebra structure of \Hb}

\begin{prop}
The morphisms \erf{def-Hb-Frobalgebra} endow the object $\Hb \eq (\Hs,\brho,\bohr)$
with the structure of a Frobenius algebra in \HBimod\ (with tensor product
\erf{def-tp}). That is, $(\Hb,m\bico,\eta\bico)$ is a (unital associative)
algebra, $(\Hb,\Delta\bico,\eps\bico)$ is a (counital coassociative) coalgebra,
and the two structures are connected by
   \be
   (\idHs\oti m\bico) \circ (\Delta\bico\oti\idHs) = \Delta\bico \circ m\bico
   = (m\bico\oti\idHs) \circ (\idHs\oti\Delta\bico) \,.
   \labl{FA}
\end{prop}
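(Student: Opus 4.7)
The proof naturally splits into three parts: algebra axioms, coalgebra axioms, and the Frobenius compatibility \erf{FA}.

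The algebra axioms for $(m\bico,\eta\bico)$ are immediate. By definition $m\bico = \Delta^{\!\wee}$ and $\eta\bico = \eps^\wee$ are the \ko-linear duals of the coassociative counital coalgebra structure $(\Delta,\eps)$ underlying the Hopf algebra $H$, so $(\Hss,\Delta^{\!\wee},\eps^\wee)$ is tautologically a unital associative algebra.

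For the coalgebra axioms I would work throughout with the reformulation $\Delta\bico = \delp$ from Lemma \ref{Delta=delp}, which is better suited to graphical manipulation. Counitality, i.e.\ $(\eps\bico\oti\idHs)\cir\delp = \idHs = (\idHs\oti\eps\bico)\cir\delp$, amounts graphically to attaching a $\Lambda$ to one of the two output strands of \erf{Deltaprime}; using that $(\lambda \cir m) \cir (\apo \oti \Lambda)$ reduces, by the cointegral/integral relation and $\lambda\cir\Lambda \eq 1$, to $\id_H$ paired with $\eps$, this collapses to the identity on $\Hss$ (the other equality is analogous, and uses unimodularity of $H$ via \erf{Hopf_Frob_trick2}). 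Coassociativity $(\delp\oti\idHs)\cir\delp \eq (\idHs\oti\delp)\cir\delp$ follows by writing both sides out graphically, invoking coassociativity of $\Delta$ to move the two comultiplications past one another, and using the Frobenius inverse identity \erf{fmap_inv} to cancel an auxiliary $\Lambda$\,--\,$\lambda$ pair that appears after the manipulation.

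The main step is the Frobenius compatibility \erf{FA}. Here I would compose the pictures \erf{pic-Hb-Frobalgebra} and \erf{Deltaprime} to obtain a graphical representation of $\delp \cir m\bico$, and then transform it by repeated application of the Hopf-Frobenius trick \erf{Hopf_Frob_trick} (to rearrange adjacent $\Delta$'s, $m$'s and antipodes on the $H$-strands underlying \Hss) together with the unimodularity identities \erf{Hopf_Frob_trick2} (to shift the $\Lambda$ coming from $\delp$ across these rearrangements). One such sequence of moves should exhibit $\delp\cir m\bico$ as $(m\bico\oti\idHs)\cir(\idHs\oti\delp)$; the other equality $\delp\cir m\bico \eq (\idHs\oti m\bico)\cir(\delp\oti\idHs)$ follows by the mirror-symmetric chain of moves, using the left-right mirrored version of the Hopf-Frobenius trick and the second half of \erf{Hopf_Frob_trick2}.

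The hard part is the Frobenius compatibility. The combinatorial bookkeeping is delicate because the $\lambda$-loop and the $\Lambda$-insertion in $\delp$ interact non-trivially with the splitting $m\bico \eq \Delta^{\!\wee}$, and the argument relies essentially on the unimodularity of the factorizable Hopf algebra $H$ as well as on the compatibility of $\lambda\cir m$ with the antipode recorded on p.\,4306 of \cite{coWe6} and already used in step (iv) of the preceding proposition.
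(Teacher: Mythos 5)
Your algebra step and your instinct to use Lemma \ref{Delta=delp} are fine, but the part you yourself single out as ``the main step'' --- the Frobenius compatibility \erf{FA} --- is both left unexecuted (``one such sequence of moves should exhibit\dots'') and aimed at the wrong tools, and this is a genuine gap. Note first that neither $\Delta\bico$ nor $\delp$ contains the integral $\Lambda$; both involve only the cointegral $\lambda$ (the $\Lambda$ sits in $\eps\bico$, which does not occur in \erf{FA}). So there is no ``$\Lambda$-insertion in $\delp$'' to shift, the unimodularity identities \erf{Hopf_Frob_trick2} have nothing to act on, and the Cohen--Westreich identity \erf{lambda-Oapoo} from p.\,4306 of \cite{coWe6} is likewise irrelevant here --- that input (together with unimodularity) belongs to the \emph{previous} proposition, where one shows that $\Delta\bico$ is a morphism of right $H$-modules, not to the Frobenius property. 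The idea your plan misses is that having \emph{two} expressions for one and the same map is precisely what trivializes this proposition: written with $\Delta\bico$ from \erf{def-Hb-Frobalgebra}, one of the two equalities in \erf{FA} is a direct consequence of nothing more than coassociativity of $\Delta$; written with $\delp$ from \erf{Deltaprime}, the other equality (this is \erf{235}) is again a direct consequence of coassociativity of $\Delta$; and Lemma \ref{Delta=delp} then converts the latter into the missing half of \erf{FA}. The same device gives coassociativity of the coproduct, via the mixed identity $(\idHs\oti\delp)\circ\Delta\bico = (\Delta\bico\oti\idHs)\circ\delp$, which also follows directly from coassociativity of $\Delta$ --- no Hopf--Frobenius trick, no unimodularity, and certainly no factorizability enter at this stage. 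Your proposed graphical campaign with \erf{Hopf_Frob_trick} and \erf{Hopf_Frob_trick2} could not be completed as described, because the identities you want to apply concern $\Lambda$, which simply does not appear in the morphisms being compared.

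A second, smaller gap concerns counitality. Your argument covers only the easy composite: attaching $\Lambda$ on one side does collapse by the integral property of $\Lambda$ together with $\lambda\cir\Lambda\eq1$. But the other composite is not ``analogous'', and unimodularity via \erf{Hopf_Frob_trick2} is not the relevant input (a naive application of the integral identities there produces stray powers of $\apo$ that you have no means to remove); what is actually needed is the invertibility \erf{fmap_inv} of the Frobenius map, as in the chain \erf{Hb_rightunit} of the paper --- or, equivalently, one proves one counit equality with the form $\Delta\bico$, the other with the form $\delp$, and invokes Lemma \ref{Delta=delp} once more. So the correct inputs are all in your toolkit (you even cite \erf{fmap_inv}, but for coassociativity, where it is not needed if you use the mixed-form argument); as written, however, the proposal neither identifies them at the places where they are required nor carries out the central computation.
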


\begin{proof}
(i)~\,That $(\Hb,m\bico,\eta\bico) \eq (\Hs,\Delta^{\!\wee}_{},\eps^\wee)$ is a
unital associative algebra just follows from (and implies) the fact that
$(H,\Delta,\eps)$ is a counital coassociative coalgebra.
\nxl1
(ii)~\,It follows directly from the coassociativity of $\Delta$ that
   \be
   (\idHs\oti\delp)\circ\Delta\bico = (\Delta\bico\oti\idHs)\circ\delp \,.
   \ee
Since, as seen above, $\delp\eq\Delta\bico$, this shows that $\Delta\bico$ is
a coassociative coproduct.
\nxl1
(iii)~\,The coassociativity of $\Delta$ also implies directly the first of
the Frobenius properties \erf{FA}, as well as
   \be
   (m\bico\oti\idHs) \cir (\idHs\oti\delp) = \delp \cir m\bico \,.
   \labl{235}
In view of $\delp\eq\Delta\bico$, \erf{235} is the second of the equalities \erf{FA}.
\nxl1
(iv)~\,That $\eps\bico \eq \Lambda^*_{}$ is a counit for the coproduct
$\Delta\bico$ follows with the help of the invertibility \erf{fmap_inv} of the
Frobenius map: we have
   \Eqpic{Hb_rightunit} {420} {38} { \put(0,2){
   \put(-16,0)  {\Includepichtft{88a}
   \put(-4,-8.5) {\sse$ \Hs $}
   \put(51,91)   {\sse$ \Hs $}
   }
   \put(59,35)   {$=$}
   \put(89,0)  {\Includepichtft{88b}
   \put(-4,-8.5) {\sse$ \Hs $}
   \put(22.6,91) {\sse$ \Hs $}
   }
   \put(136,35)  {$=$}
   \put(166,0) {\Includepichtft{88e}
   \put(-4,-8.5) {\sse$ \Hs $}
   \put(60.6,91) {\sse$ \Hs $}
   }
   \put(251,35)  {$=$}
   \put(281,0) {\Includepichtft{88d}
   \put(-4,-8.5) {\sse$ \Hs $}
   \put(60.6,91) {\sse$ \Hs $}
   }
   \put(366,35)  {$=$}
   \put(396,0) {\Includepichtft{88c}
   \put(-4,-8.5) {\sse$ \Hs $}
   \put(45.6,91) {\sse$ \Hs $}
   } } }
Here the left hand side is $(\idHs\oti\eps\bico)\cir\Delta\bico$, while the
right hand side is $(\eps\bico\oti\idHs)\cir\Delta\bico$.
\end{proof}

\begin{rem}\label{rem:vac}
The spaces of left- and right-module morphisms, respectively, from \Hb\ to $\one$
are given by $\ko\,\Lambda_{\rm r}$ and by $\ko\,\Lambda_{\rm l}$,  respectively,
with $\Lambda_{\rm r}$ and $\Lambda_{\rm l}$ non-zero left and right integrals
of $H$. Thus a non-zero bimodule morphism from \Hb\ to $\one$ exists iff $H$ is
unimodular, and in this case it is unique up to a non-zero scalar. In particular,
up to a non-zero scalar the Frobenius counit $\eps\bico$ is already
completely determined by the requirement that it is a morphism of bimodules.
In the situation at hand, the algebra \Hb\ being a Frobenius algebra is thus a
\emph{property} rather than the choice of a structure.
\end{rem}

       \newpage


\section{Commutativity}\label{sec-braid}

The conventional tensor product \erf{otimesHMod} of bimodules generically does
not admit a braiding. In contrast, the monoidal category \HBimod, with tensor
product as defined in \erf{def-tp}, over a quasitriangular Hopf algebra admits
braidings, and in fact can generically be endowed with several inequivalent ones.
Among these inequivalent braidings, one is distinguished from the
point of view of full local conformal field theory. We will select this
particular braiding $c$ and then show that with respect to this
braiding $c$ the algebra $(\Hb,m\bico,\eta\bico)$ is commutative.

The R-matrix $R\iN H\oti H$ is equivalent to a braiding $c^\HMod_{}$
on the category \HMod\ of left $H$-mo\-dules, consisting of a natural family of
isomorphisms in $\HomH(X\oti Y,Y\oti X)$ for each pair $(X,\rho_X),(Y,\rho_Y)$
of $H$-mo\-dules. These braiding isomorphisms are given by
   \be
   c^{H\text{-Mod}}_{X,Y} = \tau^{}_{X,Y} \circ (\rho_X \oti \rho_Y)
   \circ (\id_H \oti \tau^{}_{H,X} \oti \id_Y) \cir (R\oti\id_X\oti \id_Y) \,,
   \ee
where $\tau$ is the flip map. (When written in terms of elements $x\iN X$ and
$y\iN Y$, this amounts to
$x\oti y \,{\mapsto} \sum_i s_i\,y \oti r_i\,x$ for $R \eq \sum_i r_i\oti s_i$,
but recall that we largely refrain from working with elements.)
The inverse braiding is given by a similar formula, with $R$ replaced
by $R_{21}^{-1} \,{\equiv}\, \tauHH\cir R^{-1}_{}$.
Besides $R$, also the inverse $R_{21}^{-1}$ endows the category \HMod\ with the
structures of a braided tensor category; the two braidings are inequivalent
unless $R_{21}^{-1}$ equals $R$, in which case the category is symmetric.

Likewise one can act with $R$ and with $R_{21}^{-1}$ from the right to obtain
two different braidings on the category of right $H$-modules. As a consequence,
with respect to the chosen tensor product on \HBimod\ there are two inequivalent
natural braidings obtained by either using $R$ both on the left and on the right,
or else using (say) $R_{21}^{-1}$ on the left and $R$ on the right.
For our present purposes (compare Lemma \ref{lem-equiv}(iii)\hspace{.3pt}) the
second of these possibilities turns out to be the relevant braiding $c$.
Pictorially, describing the R-matrix and its inverse by
   \eqpic{def_R}{220}{7} { \put(0,-28){
   \put(0,36)    {$ R ~= $}
   \put(36,18) {\includepichopf{16}
   \put(4.5,44)  {\sse$ H $}
   \put(16,44)   {\sse$ H $}
   }
   \put(84,36)   {$ \qquand~~ R^{-1} ~= $}
   \put(207,18) {\includepichopf{16a}
   \put(4.5,44)  {\sse$ H $}
   \put(16,44)   {\sse$ H $}
   } } }
the braiding on \HBimod\ looks as follows:
   \eqpic{bibraid} {120} {43} {
   \put(0,45)    {$ c_{X,Y}^{} ~= $}
   \put(60,0)  {\Includepichtft{90}
   \put(24,-8.5) {\sse$ X $}
   \put(24.8,107){\sse$ X $}
   \put(42.3,-8.5) {\sse$ Y $}
   \put(43.5,107){\sse$ Y $}
   \put(-17,9)   {\sse$ R^{-1} $}
   \put(75,68)   {\sse$ R $}
   \put(42,56)   {\sse$ \tau_{X,Y}^{} $}
   } }
We are now in a position to state

\begin{prop}
The product $m\bico$ of the Frobenius algebra \Hb\ in \HBimod\ is commutative
with respect to the braiding \mbox{\erf{bibraid}}:
   \be
   m\bico \circ c^{}_{\Hb,\Hb} = m\bico \,.
   \ee
\end{prop}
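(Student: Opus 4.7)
The plan is to reduce commutativity of $m\bico$ with respect to the braiding $c$ to the quasi-cocommutativity of the R-matrix, i.e.\ the defining relation $\Delta^{\mathrm{op}}\eq\ad R\cir\Delta$ from \erf{deqf-qt}, together with the standard identity $(\apo\oti\apo)(R)\eq R$ valid for any quasitriangular Hopf algebra.

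First I unfold the left-hand side diagrammatically. Substituting the graphical definitions \erf{pic-Hb-Frobalgebra}, \erf{bibraid} and \erf{rhoHb,ohrHb} represents $m\bico\cir c^{}_{\Hb,\Hb}$ as a picture in \Vectk\ whose top is the dualized coproduct $\Delta^\vee$ and whose body contains the flip $\tau^{}_{\Hss,\Hss}$, an $R^{-1}$ fed into the two \Hss-strands through the coregular left action $\brho$ (which itself carries an antipode $\apo$), and an $R$ fed in through the coregular right action $\bohr$ (which carries an $\apoi$).

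Second I read the diagram upside down, converting the \Hss-strands into $H$-strands. The claim $m\bico\cir c^{}_{\Hb,\Hb}\eq m\bico$ is thereby equivalent to an equality of morphisms $H\To H\oti H$: namely that $\Delta$ composed with a certain $H\oti H$-endomorphism built from $R^{\pm 1}$, antipodes and the flip equals $\Delta$. Using $\apo\cir\apoi\eq\id_H$ and $(\apo\oti\apo)(R^{\pm 1})\eq R^{\pm 1}$, I push the antipodes past the R-matrices and cancel them, reducing the claim to an identity in which $R$ and $R^{-1}$ flank $\tauHH\cir\Delta\eq\Delta^{\mathrm{op}}$. This is precisely the quasi-cocommutativity relation: applying $\Delta^{\mathrm{op}}\eq\ad R\cir\Delta$ moves $R$ through $\Delta$ and lets it cancel against the $R^{-1}$ on the other side, after which the flip is absorbed and the picture collapses to the transpose of $m\bico$, establishing the proposition.

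The main subtlety is diagrammatic bookkeeping: one must verify that the asymmetric braiding \erf{bibraid} singled out in the paper (with $R_{21}^{-1}$ on the left and $R$ on the right) meshes precisely with the antipode pattern coming from the coregular bimodule structure of \Hb. It is exactly this matching that allows the antipodes to be eliminated and expose the bare quasi-cocommutativity; were one to use the other natural braiding (with $R$ on both sides), the corresponding cancellation would fail, which is why this particular braiding is distinguished.
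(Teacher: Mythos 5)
Your proposal is correct and follows essentially the same route as the paper: after inserting the definitions of $m\bico$, the braiding \erf{bibraid} and the coregular actions \erf{rhorho}, the paper likewise eliminates the antipodes using $(\apo\oti\apo)\cir R^{-1}\eq R^{-1}$ and $(\apoi\oti\apoi)\cir R\eq R$ (consequences of $(\apo\oti\id_H)\cir R\eq R^{-1}\eq(\id_H\oti\apoi)\cir R$) and then cancels $R$ against $R^{-1}$ via the intertwining property $\Delta^{\rm op}\eq\ad R\cir\Delta$. Your closing observation about why the asymmetric braiding (with $R_{21}^{-1}$ on the left and $R$ on the right) is exactly what makes the cancellation work matches the role this choice plays in the paper's chain of equalities \erf{Hb_comm}.
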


\begin{proof}
We have
   \Eqpic{Hb_comm} {420} {106} { \put(-17,-6) {
   \put(0,181)   {$m\bico\circ \cHbHb ~= $}
   \put(88,134) { \Includepichtft{91a}
   \put(5,-8.5)  {\sse$\Hs $}
   \put(19,-8.5) {\sse$\Hs $}
   \put(62,107)  {\sse$\Hs $}
   }
   \put(184,181)  {$=$}
   \put(221,134) {\Includepichtft{91b}
   \put(-6,-8.5) {\sse$\Hss $}
   \put(8,-8.5)  {\sse$\Hss $}
   \put(70,107)  {\sse$\Hss $}
   }
   \put(59,0){
   \put(0,47)    {$=$}
   \put(26,0) { \Includepichtft{91c}
   \put(-6,-8.5) {\sse$\Hss $}
   \put(8,-8.5)  {\sse$\Hss $}
   \put(68.7,107){\sse$\Hss $}
   }
   \put(125,47)  {$=$}
   \put(151,0) { \Includepichtft{91d}
   \put(-6,-8.5) {\sse$\Hss $}
   \put(8,-8.5)  {\sse$\Hss $}
   \put(68.7,107){\sse$\Hss $}
   }
   \put(250,47)  {$=$}
   \put(279,0) { \Includepichtft{79a}
   \put(-6,-8.5) {\sse$\Hss $}
   \put(8,-8.5)  {\sse$\Hss $}
   \put(50.7,107){\sse$\Hss $}
   }
   \put(359,47)  {$ = ~m\bico \,.$}
   } } }
Here in the second equality the definition \erf{rhorho} of the $H$-actions on
\Hb\ is inserted. The third equality holds because the R-matrix satisfies
   \be
   (\apo\oti\id_H)\circ R = R^{-1} = (\id_H\oti\apoi)\circ R \,,
   \ee
which implies $(\apo\oti\apo)\cir R^{-1} \eq R^{-1}$
as well as $(\apoi\oti\apoi)\cir R \eq R$. The fourth equality follows by the
defining property of $R$ to intertwine the coproduct and opposite coproduct of $H$.
\end{proof}


\section{Symmetry, specialness and twist}\label{sec-dual}

By combining the dualities of \Vectk\ with the antipode or its inverse, one
obtains left and right dualities on the category \HMod\ of left modules over
a \findim\ Hopf algebra $H$, and likewise for right $H$-modules.
In the same way we can define left and right dualities on \HBimod.
Since the monoidal unit of \HBimod\ (with our choice of tensor product) is
the ground field \ko, we can actually take for the evaluation and
coevaluation morphisms (and thus for the action of the functors on morphisms)
just the evaluation and coevaluation maps \erf{graphics2} of \Vectk, and
choose to define the action on objects $X \eq (X,\rho,\ohr)\iN\HBimod$ by
   \be
   X^\vee := (X^*,\rhov,\ohrv)  \qquand {}^{\vee}\!X:= (X^*,\rhoV,\ohrV\,)
   \labl{def-duals}
with
   \Eqpic{Hbim_dualactions} {420} {42} { \put(-22,8){
   \put(0,35)    {$\rhov ~:= $}
   \put(34,0)  {\Includepichtft{96c}
   \put(-2.5,-9.2) {\sse$ H $}
   \put(10,-8.5) {\sse$ X^*_{} $}
   \put(31.3,49.8) {\sse$ \rho $}
   \put(42.2,94) {\sse$ X^*_{\phantom|} $}
   }
   \put(126,35)  {$\ohrv ~:= $}
   \put(175,0) {\Includepichtft{96d}
   \put(-5,-8.5) {\sse$ X^*_{} $}
   \put(11,60.7) {\sse$ \ohr $}
   \put(29.8,94) {\sse$ X^*_{\phantom|} $}
   \put(41,-9.2) {\sse$ H $}
   }
   \put(259,35)  {$\rhoV ~:= $}
   \put(298,0) {\Includepichtft{96e}
   \put(-2.4,-9.2) {\sse$ H $}
   \put(13.6,94) {\sse$ X^*_{\phantom|} $}
   \put(35,54.7) {\sse$ \rho $}
   \put(44,-8.5) {\sse$ X^*_{} $}
   }
   \put(390,35)  {$\ohrV ~:= $}
   \put(429,0) {\Includepichtft{96f}
   \put(11,54.1) {\sse$ \ohr $}
   \put(27,-8.5) {\sse$ X^*_{\phantom|} $}
   \put(-2.9,94) {\sse$ X^*_{\phantom|} $}
   \put(45,-9.2) {\sse$ H $}
   } } }
That the morphisms \erf{Hbim_dualactions} are (left respectively right) $H$-actions
follows from the fact that the antipode is an anti-algebra morphism, and that the
evaluations and coevaluations are bimodule morphisms follows from the defining property
$m\cir(\apo\oti\id_H) \cir \Delta \eq \eta\cir\eps \eq m\cir(\id_H\oti\apo) \cir
\Delta$ of the antipode. Note that with our definition of dualities
  \footnote{~The left and right duals of any object in a category with dualities are
  unique up to distinguished isomorphism. Our choice does not make use of the fact that
  $H$ is a ribbon Hopf algebra. Another realization of the dualities on \HMod\ (and
  analogously on \HBimod), which involves the special group-like element of $H$ and
  hence does use the ribbon structure, is described e.g.\ in \cite[Lemma\,4.2]{vire4}.}
we have
   \be
   {}^{\vee\!}_{}(X^\vee) = X = {({}^{\vee\!}X)}^\vee
   \ee
as equalities (not just isomorphisms) of $H$-bimodules.

\smallskip

The \emph{canonical element} (also called \emph{Drinfeld element}) $u\iN H$
of a quasitriangular Hopf algebra $(H,R)$ with invertible antipode is the element
   \be
   u := m \circ (\apo\oti\id_H) \circ \tauHH \circ R \,.
   \labl{u-R}
$u$ is invertible and satisfies $\apo^2 \eq \ad u$ \cite[Prop\,VIII.4.1]{KAss}.
We denote by $\uvi\iN H$ the inverse of the so-called special group-like element,
i.e.\ the product
   \be
   \uvi := u\,v^{-1} \equiv m \circ (u\oti v^{-1})
   \ee
of the Drinfeld element and the inverse of the ribbon element $v$. Since $v$
is invertible and central, we have $\ad{\uvi} \eq \apo^2$ and, as a consequence,
   \be
   m \circ (\apo\oti \uvi) = m \circ (\uvi\oti\apoi) \qquand
   m \cir (\apoi\oti \uvi^{-1}) \eq m\cir(\uvi^{-1}\oti\apo) \,.
   \labl{msu=musi}
Also, since $\uvi$ is group-like we have $\eps\cir \uvi \eq 1$ and
   \be
   \apo \circ \uvi = \uvi^{-1} = \apoi \circ \uvi \,.
   \labl{apo-t}

A \emph{sovereign structure} on a category with left and right dualities is a
choice of monoidal natural isomorphism $\piv$ between the left and right duality 
functors \cite[Def.\,2.7]{drab3}.\,%
  \footnote{~Equivalently \cite[Prop.\,2.11]{yett2} one may require the existence of
  monoidal natural isomorphisms between the (left or right) double dual functors
  and the identity functor. The latter is called a balanced structure (see e.g.\
  Section 1.7 of \cite{davy4}), or sometimes also
  a pivotal structure (see e.g.\ Section 3 of \cite{schau22}). }
The category \HMod\ of left modules over a ribbon Hopf algebra $H$ is sovereign iff
the square of the antipode of $H$ is inner \cite{bich2,drab3}. Similarly, we have

\begin{lemma}\label{lem-sovereign}
For a ribbon Hopf algebra $H$ with invertible antipode, the family
$\piv_X$ that is defined by 
   \eqpic{pivX} {190} {37} {
   \put(0,39)    {$ \piv_X ~:= $}
   \put(50,0)  {\Includepichtft{97a}
   \put(-4.5,-8) {\sse$ \Xs $}
   \put(-3,88)   {\sse$ \Xs $}
   \put(28,28)   {\sse$ \uvi $}
   \put(58,20)   {\sse$ \uvi $}
   }
   \put(127,39)  {$\in \Homk(X^\wee_{\phantom|},X^\wee_{\phantom|}) $}
   }
for $X\iN\HBimod$ (with $X^*$ the vector space dual to $X$)
furnishes a sovereign structure on the category \HBimod\ of bimodules over $H$.
\end{lemma}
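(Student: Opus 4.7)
The plan is to verify the three ingredients that make the family $\{\piv_X\}$ a sovereign structure: (i) each $\piv_X$, regarded as a linear map $X^* \To X^*$, is a bimodule isomorphism from the right dual $X^\vee$ to the left dual ${}^{\vee\!}X$; (ii) the family $\{\piv_X\}$ is natural in $X$; and (iii) it is monoidal, i.e.\ compatible with the tensor product and with the monoidal unit.

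For (i), invertibility of $\piv_X$ is immediate from the invertibility of $\uvi$: the inverse is given by the diagram \erf{pivX} with $\uvi$ replaced by $\uvi^{-1}$. To show that $\piv_X$ intertwines the bimodule structures given in \erf{Hbim_dualactions}, I would redraw the equation $\piv_X \cir \rhov \eq \rhoV \cir (\id_H \oti \piv_X)$ as a string diagram, and then transform one side into the other using the identities \erf{msu=musi} to slide the $\uvi$ factors past the antipodes, together with \erf{apo-t} to re-express $\apo \cir \uvi$ as $\uvi^{-1}$ where needed. The combined effect implements $\ad{\uvi} \eq \apo^2$, which converts the $\apo$ appearing in $\rhov$ into the $\apoi$ appearing in $\rhoV$ (and vice versa for the right-action equation). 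Naturality (ii) is essentially automatic, since any bimodule morphism $f \colon X \To Y$ induces the same underlying linear map $f^*$ as both $f^\vee$ and ${}^{\vee\!}f$, and $\piv$ acts only through the $H$-actions on $X^*$, which $f^*$ by hypothesis intertwines.

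For (iii), the crucial ingredient is that $\uvi$ is group-like, so $\Delta \cir \uvi \eq \uvi \oti \uvi$ and $\eps \cir \uvi \eq 1$. Under the tensor-product bimodule structure \erf{def-tp} the two insertions of $\uvi$ in the diagram for $\piv_{X \oti Y}$ factor through $\Delta(\uvi)$, which is exactly what is needed to identify $\piv_{X \oti Y}$ with $\piv_X \oti \piv_Y$ under the canonical isomorphisms $(X \oti Y)^\vee \cong Y^\vee \oti X^\vee$ and ${}^{\vee\!}(X \oti Y) \cong {}^{\vee\!}Y \oti {}^{\vee\!}X$; compatibility with the monoidal unit $\ko_\eps$ reduces to $\eps \cir \uvi \eq 1$. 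The main obstacle will be step (i), where the pictorial identity requires carefully tracking all four occurrences of $\uvi^{\pm 1}$ and $\apo^{\pm 1}$ and applying \erf{msu=musi} in the right sequence; once this is in place, (ii) and (iii) follow cleanly from naturality of the construction and from the group-like nature of $\uvi$.
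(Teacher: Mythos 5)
Your proposal is correct and follows essentially the same route as the paper's proof: the bimodule intertwiner property via \erf{msu=musi} (i.e.\ $\ad{\uvi}\eq\apo^2$) together with the representation properties, invertibility by replacing $\uvi$ with $\uvi^{-1}$, naturality from the fact that $f^\vee$ and ${}^{\vee\!}f$ share the same underlying linear map while $f$ intertwines the $H$-actions, and monoidality from $\uvi$ being group-like. The only differences are cosmetic (e.g.\ invoking \erf{apo-t} explicitly), so nothing further is needed.
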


\begin{proof}
We must show that $\piv_X$ is an invertible bimodule intertwiner from $X^\vee$ to
$^{\vee\!}X$ (i.e.\ the dual bimodules as defined in \erf{def-duals}), that the family
$\{\piv_X\}$ is
natural, and that it is monoidal, i.e.\ $\piv_{X\otimes Y}\eq \piv_X\oti \piv_Y$.
\nxl1
(i)\,~That $\piv_X$ is a morphism in $\HomHH(X^\vee,{}^{\vee\!}X)$ is equivalent to
   \eqpic{pivX_lact_ract} {366} {40} {
   \put(0,0) {\Includepichtft{98f}
   \put(-3.5,-8.5) {\sse$ H $}
   \put(16.6,21) {\sse$\uvi$}
   \put(25,-8.5) {\sse$ X $}
   \put(27.5,95) {\sse$ X $}
   }
   \put(57,38)   {$=$}
   \put(84,0) {\Includepichtft{98g}
   \put(-4.5,-8.5) {\sse$ H $}
   \put(16,52.5) {\sse$\uvi$}
   \put(26,-8.5) {\sse$ X $}
   \put(27.5,95) {\sse$ X $}
   }
   \put(163,38)  {and}
   \put(231,0)  {\Includepichtft{100a}
   \put(-3.5,-8.5) {\sse$ X $}
   \put(-3.1,95) {\sse$ X $}
   \put(11.5,22) {\sse$\uvi$}
   \put(27,-8.5) {\sse$ H $}
   }
   \put(282,38)   {$=$}
   \put(316,0)  {\Includepichtft{100b}
   \put(-3.5,-8.5) {\sse$ X $}
   \put(-3.1,95) {\sse$ X $}
   \put(12,47)   {\sse$\uvi$}
   \put(27,-8.5) {\sse$ H $}
   } }
This in turn follows directly by combining \erf{msu=musi} and the (left,
respectively right) \rep\ properties.
\nxl1
(ii)\,~With the help of the defining properties of the evaluation
and coevaluation maps it is easily checked that
   \eqpic{pivX_inv} {190} {35} {
   \put(-6,38)   {$ \piv_X^{-1} ~:= $}
   \put(52,0)  {\Includepichtft{97b}
   \put(53,-8.5) {\sse$ X^*_{\phantom|} $}
   \put(54,88)   {\sse$ X^*_{\phantom|} $}
   \put(24.9,30.7) {\sse$t^{-1}$}
   \put(-14,18.5){\sse$t^{-1}$}
   }
   \put(128,38)  {$\in \Homk(X^\wee_{\phantom|},X^\wee_{\phantom|})$}
   }
is a linear two-sided inverse of $\piv_X$. That $\piv_X^{-1}$ is a bimodule
morphism is then automatic.
\nxl1
(iii)\,~That the family $\{ \piv_X \}$ of isomorphisms furnishes a natural
transformation from the right to the left duality functor means that for
any morphism $f\iN\HomHH(X,Y)$ one has
   \be
   {}^{\vee\!\!}f \circ \piv_Y = \piv_X \circ f^\vee
   \labl{vf.piY=piX.fv}
as morphisms in $\HomHH(Y^\vee\!,{}^{\vee\!\!}X)$. Now by sovereignty of \Vectk\
we know that $^{\vee\!\!}f \eq f^\vee$ as linear maps from $Y^\wee$ to $X^\wee$,
and as a consequence \erf{vf.piY=piX.fv} is equivalent to
   \be
   f \circ \varphi_X = \varphi_Y \circ f
   \labl{f.vpX=vpY.f}
as morphisms in $\HomHH(X,Y)$, where $\varphi_X$ is the left action on $X$ with
$\uvi\iN H$ composed with the right action on $X$ with $\uvi$.
\erf{f.vpX=vpY.f}, in turn, is a direct consequence of the fact that $f$ is
a bimodule morphism.
\nxl1
(iv)\,~That $\piv_X$ is monoidal follows from the fact that $\uvi$ is group-like.
\end{proof}

\begin{defi} ~\nxl1
(a) An \emph{invariant pairing} on an algebra $A \eq (A,m,\eta)$ in a monoidal
category $(\C,\otimes,\one)$ is a morphism $\kappa\iN\HomC(A\oti A,\one)$ satisfying
$\kappa \cir (m \oti \idA) \eq \kappa \cir (\idA \oti m)$.
\nxl1
b) A \emph{symmetric} algebra $(A,\kappa)$ in a sovereign category \C\ is an algebra
$A$ in \C\ together with an invariant pairing $\kappa$ that is symmetric, i.e.\ satisfies
   \eqpic{pic_csp_14} {230} {22} {
   \put(0,0)   {\Includepichtft{99a}}
   \put(74,23)   {$ = $}
   \put(25.4,-8.5) {\sse$A$}
   \put(30.7,38.5) {\sse$\kappa$}
   \put(43.8,-8.5) {\sse$A$}
   \put(-13.5,40.9){\sse$\piv_{\!A}^{-1}$}
   \put(104,10) {\Includepichtft{99b}
   \put(2.4,-8.5)  {\sse$A$}
   \put(12.7,-8.5) {\sse$A$}
   \put(24.2,31.3) {\sse$\kappa$}
   }
   \put(148,23)  {$ = $}
   \put(179,0) {\Includepichtft{99c}
   \put(-3.3,-8.5) {\sse$A$}
   \put(12.7,38.5) {\sse$\kappa$}
   \put(16,-8.5)   {\sse$A$}
   \put(50,39)     {\sse$\piv_{\!A}^{}$}
   }
   \put(260,44)    {\catpic}
   }
\end{defi}

\begin{rem}
(i)\,~Unlike the pictures used so far (and most of the pictures below), which
describe morphisms in \Vect, \erf{pic_csp_14} refers to morphisms in the category
\C\ rather than in \Vect; to emphasize this we have added the box
\,\raisebox{-.3em}{\catpicsm}\, to the picture. Also note that the morphisms
\erf{pic_csp_14} involve the left and right dualities of \C, but do not assume a
braiding. Thus the natural setting for the notion of symmetry of an algebra is
the one of sovereign categories \C; a braiding on \C\ is not needed.
\nxl1
(ii)\,~An algebra with an invariant pairing $\kappa$
is Frobenius iff $\kappa$ is non-degenerate, see e.g.\ \cite[Sect.\,3]{fuSt}.
\nxl1
(iii)\,~The two equalities in \erf{pic_csp_14} actually imply each other.
\end{rem}

In the case of the category \HBimod\ with sovereign structure $\pi$ as defined
in \erf{pivX}, the equalities \erf{pic_csp_14} read
   \eqpic{sym_Hbimod} {200} {20} { \put(12,0){
   \put(0,0)   {\Includepichtft{99d}}
   \put(9.4,-8.5){\sse$A$}
   \put(31,55.5) {\sse$\kappa$}
   \put(19.8,-8.5) {\sse$A$}
   \put(-14,11)  {\sse$t^{-1}$}
   \put(12,24)   {\sse$t^{-1}$}
   }
   \put(68,23)   {$ = $}
   \put(100,10)  {\Includepichtft{99b}
   \put(2.1,-8.5){\sse$A$}
   \put(12.8,-8.5) {\sse$A$}
   \put(24.2,31.3) {\sse$\kappa$}
   }
   \put(146,23)  {$ = $}
   \put(179,0) {\Includepichtft{99e}
   \put(2.1,-8.5){\sse$A$}
   \put(25,55.5) {\sse$\kappa$}
   \put(12.8,-8.5) {\sse$A$}
   \put(32,11)   {\sse$t$}
   \put(12,24)   {\sse$t$}
   } }

\begin{thm}
For any unimodular \findim\ ribbon Hopf algebra $H$ the pair $(\Hb,\kappa\bico)$
with \Hb\ the \bicom\ (with Frobenius algebra structure as defined above) and
   \be
   \kappa\bico := \eps\bico \circ m\bico = {(\Delta\cir\Lambda)}^\wee
   \ee
is a symmetric Frobenius algebra in \HBimod.
\end{thm}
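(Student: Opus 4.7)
The Frobenius algebra structure on \Hb\ has already been established, so the morphism $\kappa\bico = \eps\bico \cir m\bico$ is automatically an invariant pairing --- by associativity of $m\bico$ and counitality of $\eps\bico$ --- which is non-degenerate because the general Frobenius yoga supplies $\Delta\bico \cir \eta\bico$ as its copairing. The explicit equality $\kappa\bico = (\Delta \cir \Lambda)^\vee$ is then immediate from $m\bico = \Delta^\vee$ and $\eps\bico = \Lambda^\vee$ as declared in \erf{def-Hb-Frobalgebra}. What remains to verify is the sovereign symmetry condition \erf{pic_csp_14}, specialised to \HBimod\ via the pivotal family $\piv_\Hb$ supplied by Lemma \ref{lem-sovereign}.

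The plan is to insert the definition \erf{pivX} of $\piv_\Hb$ into \erf{sym_Hbimod}, to describe the left and right duals using \erf{def-duals}, and to strip off the evaluation and coevaluation maps of \Vectk. The graphical condition then boils down to a single identity for the element $\Delta \cir \Lambda \iN H \oti H$, roughly of the shape
\begin{equation}
\tauHH \cir (\Delta \cir \Lambda) = \bigl(\ad{\uvi} \oti \id_H\bigr) \cir (\Delta \cir \Lambda)
\end{equation}
(with the exact side and placement of the $\uvi$-conjugations dictated by the two legs of $\piv_\Hb$, one of which acts on \Hb\ from the left and the other from the right). The group-like property of $\uvi$, together with $\eps \cir \uvi = 1$ and \erf{apo-t}, will be used to split and absorb the $\uvi$-insertions appropriately.

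The key identity on $\Delta \cir \Lambda$ is proved by combining two ingredients. First, unimodularity of $H$ gives $\apo \cir \Lambda = \Lambda$, so that, since the antipode is an anti-coalgebra morphism,
\begin{equation}
(\apo \oti \apo) \cir (\Delta \cir \Lambda) = \tauHH \cir \Delta \cir \apo \cir \Lambda = \tauHH \cir (\Delta \cir \Lambda).
\end{equation}
Second, Radford's identity $\apo^2 = \ad u$ together with $\uvi = u\,v^{-1}$ and centrality of $v$ yields $\apo^2 = \ad{\uvi}$; applying this to the preceding equality recasts the two antipodes on the left-hand side as conjugations by $\uvi$. I expect the only real obstacle to be the bookkeeping at this last stage, because the pivot $\piv_\Hb$ treats the two tensor factors \emph{asymmetrically} (left action on one, right action on the other) whereas Radford's identity produces $\uvi$-factors \emph{symmetrically}. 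Moving them into the positions required by \erf{sym_Hbimod} will rely on $\Delta(\uvi) = \uvi \oti \uvi$ together with the unimodularity identities of Lemma in \erf{Hopf_Frob_trick2}. Once the identity is in hand, symmetry of $\kappa\bico$ follows at once, completing the proof.
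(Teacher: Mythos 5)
Your reduction of the problem is the right one --- the invariance of $\kappa\bico$ is indeed immediate from (co)associativity, and symmetry does boil down, after dualizing and stripping off the \Vectk-dualities, to an identity relating $\tauHH\cir\Delta\cir\Lambda$ to $\Delta\cir\Lambda$ with suitable $\uvi$-insertions; this is exactly how the paper proceeds. However, your proposed derivation of that key identity has a genuine gap. From $\apo\cir\Lambda\eq\Lambda$ and the anti-coalgebra property of the antipode you correctly obtain $(\apo\oti\apo)\cir(\Delta\cir\Lambda)\eq\tauHH\cir(\Delta\cir\Lambda)$, but the left-hand side carries \emph{single} antipodes, and $\apo$ itself is not inner --- only $\apo^2\eq\ad{\uvi}$ is. So the step ``recast the two antipodes as conjugations by $\uvi$'' is not available. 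What you can legitimately extract from your two ingredients is only $(\apo^2\oti\apo^2)\cir(\Delta\cir\Lambda)\eq\Delta\cir\Lambda$, i.e.\ $(\ad{\uvi}\oti\ad{\uvi})\cir(\Delta\cir\Lambda)\eq\Delta\cir\Lambda$, which does not relate the flip of $\Delta\cir\Lambda$ to $\Delta\cir\Lambda$ at all and is too weak for the symmetry condition \erf{sym_Hbimod}.

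Moreover, the identity you guess, $\tauHH\cir(\Delta\cir\Lambda)\eq(\ad{\uvi}\oti\id_H)\cir(\Delta\cir\Lambda)$, is not merely imprecisely placed but false in general: the correct statement (Theorem 3(d) of \cite{radf12}) reads, in Sweedler-type notation, $\Lambda_{(2)}\oti\Lambda_{(1)}\eq \apo^2(\Lambda_{(1)})\,g\oti\Lambda_{(2)}$ with $g$ the distinguished (right modular) group-like element defined by $g\cir\lambda\eq(\id_H\oti\lambda)\cir\Delta$; combined with the nontrivial relation $g\eq\uvi^2$ for unimodular ribbon Hopf algebras \cite{radf10} and $\apo^2\eq\ad{\uvi}$ this gives two-sided multiplication $\uvi\,\Lambda_{(1)}\uvi$ on one leg, not a conjugation $\uvi\,\Lambda_{(1)}\uvi^{-1}$. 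The two differ by the group-like factor $g\eq\uvi^2$, so your version would only hold when $g\eq 1$, i.e.\ when the cointegral is also two-sided. Since your manipulation uses nothing beyond $\apo(\Lambda)\eq\Lambda$ and $\apo^2\eq\ad{\uvi}$, it cannot produce this net group-like factor; the missing inputs are precisely Radford's theorem on the flipped coproduct of the integral and the identification $g\eq\uvi^2$, which is where the actual content of the symmetry statement lies. The final translation step you sketch (matching the $\uvi$-insertions against the pivot $\piv_\Hb$ via the dual of left multiplication by $\uvi$) is fine and is also how the paper concludes.
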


\begin{proof}
That the pairing $\kappa\bico$ is invariant follows directly from the coassociativity of
$\Delta$. To establish that $\kappa\bico$ is symmetric, consider the following equalities:
   \eqpic{symm_Hb} {190} {37} { \put(0,-3){
   \put(0,13)  {\Includepichtft{101a}
   \put(-3,67)   {\sse$H$}
   \put(9,67)    {\sse$H$}
   \put(10,0)    {\sse$\Lambda$}
   }
   \put(40,42)   {$=$}
   \put(80,0) {\Includepichtft{102a}
   \put(2,93)    {\sse$H$}
   \put(33,93)   {\sse$H$}
   \put(13,50)   {\sse$g$}
   \put(24,0)    {\sse$\Lambda$}
   }
   \put(140,42)  {$=$}
   \put(180,0) {\Includepichtft{101c}
   \put(7,93)    {\sse$H$}
   \put(38,93)   {\sse$H$}
   \put(-4.6,35) {\sse$\uvi$}
   \put(18.4,49) {\sse$\uvi$}
   \put(28,0)    {\sse$\Lambda$}
   } } }
The first equality is Theorem 3(d) of \cite{radf12}, and involves the right modular
element (also known as distinguished group-like element) $g$ of $H$, which by
definition satisfies $g\cir\lambda \eq (\id_H\oti\lambda)\cir\Delta$.  The second
equality uses that $g \eq \uvi^2$ (which holds by Theorem 2(a) and Corollary 1 of
\cite{radf10}, specialized to unimodular $H$) and $\apo^2\eq\ad t$.
\\
Using also the identity $\brho\cir(\uvi^{-1}\oti\idHs) \eq \big(m\cir(\uvi\oti\id_H)
{\big)}^*$ (which, in turn, uses \erf{apo-t}), it follows that the equality of the
left and right sides of \erf{symm_Hb} is nothing but the dualized version of the
first of the equalities \erf{sym_Hbimod} for the case $A \eq \Hb$ and
$\kappa\eq\kappa\bico$.
\end{proof}

Next we observe:

\begin{lemma}
The morphisms \erf{def-Hb-Frobalgebra} satisfy
   \be
   \eps\bico \cir \eta\bico = \eps\cir\Lambda \qquand
   m\bico \cir \Delta\bico = (\lambda\cir\eps)\, \idHs \,.
   \ee
\end{lemma}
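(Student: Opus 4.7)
The plan is to prove both identities by dualizing the structure morphisms in Definition \ref{def-Hb-Frobalgebra} and then invoking the standard Hopf-algebra axioms. Read off from \eqref{def-Hb-Frobalgebra} that $\eta\bico = \eps^\vee$, $\eps\bico = \Lambda^\vee$, $m\bico = \Delta^\vee$, and $\Delta\bico = f^\vee$ for the \ko-linear map $f := (\id_H \oti (\lambda \cir m)) \cir (\id_H \oti \apo \oti \id_H) \cir (\Delta \oti \id_H)$. Contravariance of $(-)^\vee$ then lets me convert every composition on the $\Hs$ side into the dual of the reversed composition on the $H$ side.

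For the first identity this gives $\eps\bico \cir \eta\bico = \Lambda^\vee \cir \eps^\vee = (\eps \cir \Lambda)^\vee$. Since $\eps \cir \Lambda$ lies in $\End_\ko(\ko) \cong \ko$, and the dual of a scalar map $\ko \to \ko$ is that same scalar under the canonical identification, I conclude $\eps\bico \cir \eta\bico = \eps \cir \Lambda$.

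For the second identity I aim to show that $f \cir \Delta \colon H \to H$ is a scalar multiple of $\id_H$, which on dualizing yields $m\bico \cir \Delta\bico = (f \cir \Delta)^\vee$ as the same scalar multiple of $\id_{\Hs}$. Using coassociativity $(\Delta \oti \id_H) \cir \Delta = (\id_H \oti \Delta) \cir \Delta$ I rewrite
\[ f \cir \Delta = (\id_H \oti [\lambda \cir m \cir (\apo \oti \id_H) \cir \Delta]) \cir \Delta. \]
The antipode axiom $m \cir (\apo \oti \id_H) \cir \Delta = \eta \cir \eps$ collapses the bracketed morphism to a scalar multiple of $\eps$; pulling that scalar out and applying the counit axiom $(\id_H \oti \eps) \cir \Delta = \id_H$ leaves $f \cir \Delta$ equal to a scalar times $\id_H$, and dualizing produces the stated scalar multiple of $\id_{\Hs}$.

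No serious obstacle is anticipated; the entire argument is an exercise in combining duality with the antipode and counit axioms in the correct order. The only subtlety worth flagging is to correctly match the scalar arising from the antipode-then-counit collapse with the scalar stated in the lemma.
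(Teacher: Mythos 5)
Your proof is correct and is essentially the paper's own argument in algebraic rather than graphical form: dualize the defining maps, use coassociativity to isolate $m\circ(\apo\oti\id_H)\circ\Delta=\eta\circ\eps$, and finish with the counit axiom, exactly as in the paper's pictorial computation \erf{special}. Concerning the scalar you flagged: your computation yields $f\circ\Delta=(\lambda\circ\eta)\,\id_H=\lambda(1_H)\,\id_H$, and this is precisely what the lemma's symbol $(\lambda\cir\eps)$ denotes (read literally that composition does not typecheck; the intended number is $\lambda(1_H)$, consistent with the cosemisimplicity criterion invoked right after the lemma), so your argument does establish the stated identity $m\bico\cir\Delta\bico=(\lambda\cir\eps)\,\idHs$.
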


\begin{proof}
We have
   \Eqpic{special} {420} {36} {
    \put(-15,30){
   \put(0,17)    {$\eps\bico\cir\eta\bico ~= $}
   \put(63,0) {\Includepichtft{89a}
   }
   \put(103,17)  {$=$}
   \put(123,9)   {\Includepichtft{89b}
   } }
    \put(208,12){
   \put(-55,35)  {and}
   \put(0,35)    {$m\bico\cir\Delta\bico ~= $}
   \put(74,0) {\Includepichtft{89c}
   \put(-4,-8.5) {\sse$ \Hss $}
   \put(40,89.5) {\sse$ \Hss $}
   }
   \put(132,35)  {$=$}
   \put(155,0) {\Includepichtft{89d}
   \put(-4,-8.5) {\sse$ \Hss $}
   \put(33,89.5) {\sse$ \Hss $}
   }
   \put(209,35)  {$=$}
   \put(231,0) {\Includepichtft{89e}
   \put(8.5,-8.5){\sse$ \Hss $}
   \put(10,89.5) {\sse$ \Hss $}
   } } }
Here the last equality uses the defining property of the antipode $\apo$ of $H$.
\end{proof}

\begin{defi}\label{def-spec}
A Frobenius algebra $(A,m_A,\eta_A,\Delta_A,\eps_A)$ in a \ko-linear monoidal category
is called \emph{special} \cite{fuRs4,evpi} (or strongly separable \cite{muge8})
iff $\eps_A\cir\eta_A \eq \xi\,\id_\one$
and $m_A\cir\Delta_A \eq \zeta\,\id_A$ with $\xi,\zeta\iN\ko^\times$.
\end{defi}

It is known that a \findim\ Hopf algebra $H$ is semisimple iff the Maschke number
$\eps\cir\Lambda \iN \ko$ is non-zero, and it is cosemisimple iff
$\lambda\cir\eps \iN \ko$ is non-zero \cite{laSw}; also, in characteristic zero
cosemisimplicity is implied by semisimplicity \cite[Thm.\,3.3]{laRad2}. Thus we have

\begin{cor}
The Frobenius algebra \Hb\ in \HBimod\ is special iff $H$ is semisimple.
\end{cor}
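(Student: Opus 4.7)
The plan is to reduce the statement to the preceding lemma together with the standard (co)semisimplicity criteria for finite-dimensional Hopf algebras. By that lemma, the compositions appearing in Definition \ref{def-spec} take the explicit form
\[
\eps\bico \circ \eta\bico = \eps \circ \Lambda
\qquand
m\bico \circ \Delta\bico = (\lambda \circ \eps)\, \idHs \, ,
\]
so by Definition \ref{def-spec} the Frobenius algebra $\Hb$ is special if and only if \emph{both} scalars $\eps \circ \Lambda \in \ko$ and $\lambda \circ \eps \in \ko$ are non-zero.

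Next I would invoke the two facts recalled just before the corollary: $H$ is semisimple iff the Maschke number $\eps \circ \Lambda$ is non-zero, and $H$ is cosemisimple iff $\lambda \circ \eps$ is non-zero. Each of the two non-vanishing conditions separately therefore translates into a (co)semisimplicity statement for $H$.

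Finally I would combine these with the Larson--Radford theorem, already cited in the excerpt, which asserts that over a field of characteristic zero a finite-dimensional Hopf algebra is semisimple iff it is cosemisimple. Consequently the two conditions $\eps \circ \Lambda \nE 0$ and $\lambda \circ \eps \nE 0$ are equivalent, and both amount to $H$ being semisimple; this gives the claim.

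There is no real obstacle here: the work has all been done in the lemma and in the preceding paragraph that assembles the Hopf-algebraic input. The only point that deserves care is that specialness requires \emph{both} scalars to be invertible, which is why the char-zero equivalence of semisimplicity and cosemisimplicity is indispensable; without it one would a priori only get the weaker equivalence between specialness of $\Hb$ and the conjunction \emph{semisimple and cosemisimple}.
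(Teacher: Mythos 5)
Your proposal is correct and follows essentially the same route as the paper: the lemma identifying $\eps\bico\cir\eta\bico \eq \eps\cir\Lambda$ and $m\bico\cir\Delta\bico \eq (\lambda\cir\eps)\,\idHs$, combined with the Maschke-type (co)semisimplicity criteria and the Larson--Radford characteristic-zero result, is exactly how the paper deduces the corollary. The only (harmless) difference is that you invoke the full semisimple-iff-cosemisimple equivalence, whereas the paper only needs the implication from semisimplicity to cosemisimplicity.
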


\medskip

As already pointed out we do \emph{not}, however, assume that $H$ is semisimple.
We now note a consequence of the fact that \Hb\ is commutative and symmetric,
irrespective of whether $H$ is semisimple or not. We first observe:

\begin{lemma}
The braided monoidal category \HBimod\ of bimodules over a \findim\ ribbon Hopf
\ko-algebra $H$ is balanced. The twist endomorphisms are given by
   \eqpic{bimod-twist} {90} {31} {
     \put(0,35)    {$ \theta_X ~= $}
   \put(50,0) {\Includepichtft{119b}
   \put(6.1,-8.5) {\sse$X$}
   \put(6.9,76)   {\sse$X$}
   \put(-5.4,25.7){\sse$v$}
   \put(24.3,17)  {\sse$v^{-1}$}
   } }
with $v$ the ribbon element of $H$.
\end{lemma}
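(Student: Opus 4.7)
A balanced structure on a braided monoidal category consists of a natural family of automorphisms $\{\theta_X\}$ satisfying $\theta_\one \eq \id_\one$ together with the compatibility $\theta_{X\oti Y} \eq c^{}_{Y,X}\cir c^{}_{X,Y}\cir(\theta_X\oti\theta_Y)$. For the morphisms displayed in \erf{bimod-twist} I would verify, in order, four points: (i) each $\theta_X$ lies in $\HomHH(X,X)$; (ii) the family is natural; (iii) the balancing axiom holds with respect to the braiding \erf{bibraid}; and (iv) as a trivial consistency check, $\theta_\ko\eq\id_\ko$ on the monoidal unit.

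Points (i), (ii) and (iv) are essentially formal. Since $v\iN H$ is central, left multiplication by $v$ and right multiplication by $v^{-1}$ each commute with both the left and the right $H$-action on $X$; hence $\theta_X$ is a bimodule morphism, and the endomorphism defined by swapping the roles of $v$ and $v^{-1}$ is a two-sided inverse that also lies in $\HomHH(X,X)$. Naturality is automatic because any morphism $f\iN\HomHH(X,Y)$ intertwines the $H$-actions and $\theta$ is built out of them. Finally $\theta_\ko\eq\id_\ko$ follows from $\eps\cir v\eq 1$.

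The core of the argument is point (iii). By the definition \erf{def-tp} of the tensor product in \HBimod, the left and right $H$-actions on $X\oti Y$ are pulled back along $\Delta$, so $\theta_{X\oti Y}$ acts by $\Delta(v)$ from the left and by $\Delta(v^{-1})$ from the right. The ribbon axiom $\Delta\cir v\eq(v\oti v)\cdo Q^{-1}$ gives $\Delta(v)\eq(v\oti v)\cdo Q^{-1}$ and hence, by inversion, $\Delta(v^{-1})\eq Q\cdo(v^{-1}\oti v^{-1})$. Inspection of \erf{bibraid} shows that the double braiding $c^{}_{Y,X}\cir c^{}_{X,Y}$ acts on $X\oti Y$ from the left by $R^{-1}_{}\cdo R_{21}^{-1}\eq Q^{-1}$ and from the right by $R_{21}\cdo R\eq Q$; composed with $\theta_X\oti\theta_Y$, whose left and right $H$-actions are by $v\oti v$ and $v^{-1}\oti v^{-1}$ respectively, the net effect is $Q^{-1}\cdo(v\oti v)$ on the left and $Q\cdo(v^{-1}\oti v^{-1})$ on the right. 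Centrality of $v$ ensures that $v\oti v$ commutes with $Q^{\pm 1}$, so these two expressions match $\Delta(v)$ and $\Delta(v^{-1})$ exactly, proving the balancing axiom.

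The only genuinely delicate step is bookkeeping around the asymmetric braiding \erf{bibraid}, in which $R$ enters inversely-twisted on the left and straight on the right; once that convention is fixed, the balancing axiom decouples into two independent applications of the ribbon axiom, one for $v$ and one for $v^{-1}$, tied together solely by the centrality of the ribbon element.
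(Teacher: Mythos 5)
Your proof is correct, but it takes a genuinely different route from the paper. You verify the balanced-structure axioms directly: bimodule-morphism property, naturality and $\theta_\one\eq\id$ from centrality of $v$ and $\eps\cir v\eq1$, and the balancing axiom $\theta_{X\otimes Y}\eq c_{Y,X}\cir c_{X,Y}\cir(\theta_X\oti\theta_Y)$ by pulling back along $\Delta$ and invoking the ribbon axiom $\Delta\cir v\eq(v\oti v)\cdo Q^{-1}$ twice, once per module structure, with $Q^{-1}\eq R^{-1}R_{21}^{-1}$ for the left structure (braided with $R_{21}^{-1}$) and $Q$ for the right one. The paper instead first establishes that \HBimod\ is sovereign (Lemma \ref{lem-sovereign}), quotes the general fact that a braided category with duality is sovereign iff it is balanced \cite[Prop.\,2.11]{yett2}, and then \emph{computes} the twist from the general expression \erf{twist} combining braiding, dualities and the sovereign structure \erf{pivX}, simplifying \erf{twist_Hbimod} via $\uvi\eq v^{-1}u$, $\apo^2\eq\ad\uvi$ and the relation \erf{u-R} between $u$ and $R$. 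Your argument is more elementary and self-contained, needing neither the dualities nor the Drinfeld element; what the paper's derivation buys is that the twist \erf{bimod-twist} is automatically the one induced by the sovereign structure $\piv$, hence compatible with the dualities ($\theta_{X^\vee}\eq(\theta_X)^\vee$), which is what makes \HBimod\ a ribbon category as needed later for Lyubashenko's mapping class group construction and for the symmetry discussion — with your route that compatibility would still have to be checked separately. Also be aware that the sign of your "right monodromy $\eq R_{21}R$" claim is exactly the leg-ordering convention encoded in \erf{bibraid} that you flag; under the paper's conventions it comes out as you state, but with the mirrored convention one would instead get $Q_{21}$, so this bookkeeping does need the picture, not just the formulas.
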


\begin{proof}
We have seen that \HBimod\ is braided, and according to Lemma
\ref{lem-sovereign} it is sovereign. Now a braided monoidal category with a (left or
right) duality is sovereign iff it is balanced, see e.g.\ Prop.\,2.11 of \cite{yett2}.
\\
For any sovereign braided monoidal category \C\ the twist endomorphisms $\theta_X$ can 
be obtained by combining the braiding, dualities and sovereign structure according to
   \eqpic{twist} {120} {37} {
   \put(0,40)      {$ \theta_X ~= $}
   \put(52,0)   {\Includepichtft{119a}
   \put(-4.1,-8.5) {\sse$X$}
   \put(-2.5,88)   {\sse$X$}
   \put(15.6,34)   {\begin{turn}{90}\sse$c_{X,X}^{}$\end{turn}}
   \put(47.9,44.1) {\sse$\piv_{\!X}$}
   }
   \put(126,69)    {\catpic}
   }
With the explicit form \erf{bibraid} of the braiding and \erf{pivX} of the
sovereign structure, this results in
   \eqpic{twist_Hbimod} {105} {41} {
   \put(0,39)      {$\theta_X ~=$}
   \put(52,0)   {\Includepichtft{120}
   \put(17.8,-8.5) {\sse$ X $}
   \put(18.4,94.8) {\sse$ X $}
   \put(45,23.8)   {\sse$ R $}
   \put(-14.6,15.2){\sse$ R^{-1} $}
   \put(10.4,25.7) {\sse$ \uvi $}
   \put(36.2,38.9) {\sse$ \uvi $}
   } }
Using the relations $\uvi \eq v^{-1} u$ and $\apo^2\eq\ad t$, the fact
that $v$ is central and the relation \erf{u-R} between the canonical
element $u$ and the R-matrix then gives the formula \erf{bimod-twist}.
\end{proof}

\begin{rem}\label{rem-trivtwist}
(i)\,~By using that $v\iN H$ is central and satisfies $\apo\cir v \eq v$.
it follows immediately from \erf{bimod-twist} that
the Frobenius algebra \Hb\ has trivial twist,
   \be
   \theta_\Hb = \idHs \,.
   \labl{trivtwist}
(ii)\,~In fact, a commutative symmetric Frobenius algebra in any sovereign braided
category has trivial twist. This was shown in Prop.\,2.25(i) of \cite{ffrs}
for the case that the category is strictly sovereign (i.e.\ that
the sovereign structure is trivial in the sense that $\piv_X\eq\id_X$ for all
$X$), and the proof easily carries over to general sovereign categories.
Conversely, the fact that \Hb\ is a symmetric algebra can be derived by combining
the triviality \erf{trivtwist} of the twist with commutativity.
\nxl1
(iii)~\,That \Hb\ is commutative and symmetric implies \cite[Prop.\,2.25(iii)]{ffrs}
that it is cocommutative as well.
\end{rem}


\section{Modular invariance}\label{sec-modinv}

Our focus so far has been on a natural object in the sovereign braided finite
tensor category \HBimod, the symmetric Frobenius algebra \Hb. But in any such
category there exists another natural object \HK, which has been studied
by Lyubashenko. \HK\ is a Hopf algebra, and it plays a crucial role in the
construction of mapping class group actions.
The construction of these mapping class group actions relies on the presence of
several distinguished endomorphisms of \HK. The existence of these endomorphisms 
is a consequence of universal properties characterizing the Hopf algebra object \HK. 
More precisely, apart from the antipode $\apo_\HK$ of \HK, Lyubashenko obtains
invertible endomorphisms $\SK,\TK\iN \EndC(\HK)$ that obey the relations 
\cite[Thm\,2.1.9]{lyub6}
   \be
   (\SK\,\TK)^3 = \lambda\, \SK^2 \qquad\mbox{ and }\qquad \SK^2 = \apo_\HK^{-1}
   \labl{SK,TK}
with some scalar $\lambda$ that depends on the category \C\ in question.

These endomorphisms are the central ingredient for the construction of
representations of mapping class groups of punctured Riemann surfaces on
morphism spaces of \C.  In particular, Lyubashenko \cite[Sect.\,4.3]{lyub6} 
constructed a projective representation of the mapping class group $\Gamma_{\!1;m}$
of the $m$-punctured torus on morphism spaces of the form 
$\HomC(\HK,X_1 \oti X_2 \oti \cdots \oti X_m)$, for $(X_1,X_2,...\,,X_m)$ any $m$-tuple of 
objects of \C. Specializing to the case of one puncture, $m\eq1$, there is, for any object 
$X$ of \C, a projective \slze-action $\slze \Times \HomC(\HK,X) \To \HomC(\HK,X)$.
The mapping class group $\slze$ is generated by three generators $S$, $T$ and $D$,
where $D$  is the Dehn twist around the puncture. The representation of $\slze$ satisfies
   \be
   (S,f) \,\mapsto\, f \circ \SK^{-1}\,, ~~\quad (T,f) \,\mapsto\, f \circ \TK^{-1}
   ~\quad\text{ and }~\quad (D,f) \,\mapsto\, \theta_X \circ f
   \labl{slze-rep}
with $\theta$ the twist of \C.

These general constructions apply in particular to the finite tensor category
\HBimod. The main goal in this section is to use the coproduct of the Frobenius
algebra \Hb\ to construct an element in $\HomHH(\HK,\Hb)$ that is invariant
under the action of \slze. A similar construction allows one to derive an
invariant element in $\HomHH(\HK\oti\Hb,\one)$ from the product of \Hb.
For the motivation to detect such elements and for possible applications
in full local conformal field theory we refer to Appendix \ref{app:cft}.


  \subsection{Distinguished endomorphisms of coends}

A finite tensor category \cite{etos} is a \ko-linear abelian rigid monoidal
category with enough projectives and with finitely many simple objects up to
isomorphism, with simple tensor unit, and with every object having a composition
series of finite length. Both \HMod\ and \HBimod, for $H$ a \findim\ unimodular
ribbon Hopf algebra, belong to this class of categories, see \cite{lyma,lyub6}.

Let \C\ be a sovereign braided finite tensor category. As shown in \cite{lyub6,kerl5}
(compare also \cite{vire4} or, as a review, Sections 4.3 and 4.5 of \cite{fuSc17}),
there exists an object \HK\ in \C\ that carries a natural structure of a Hopf algebra
in \C. Moreover, there is a two-sided integral as well as a Hopf pairing for this
Hopf algebra \HK. Combining the integral of \HK\ and other structure of the category, 
one constructs \cite{lyma,lyub6,lyub8} distinguished morphisms $\SK$ and $\TK$ 
satisfying \erf{SK,TK} in $\EndC(\HK)$.

The Hopf algebra \HK\ can be characterized as the \emph{coend}\,%
  \footnote{~%
  The definition of the coend $\HK$ of a functor $G\colon\, \COPC \To \C$, including
  the associated dinatural family $\iHK$ of morphisms $\iHK_X \iN \HomC(F(X,X),\HK)$,
  will be recalled at the beginning of Appendix \ref{bicom-coend}.}    
   \be
   \HK = \Coend FX = \coend X
   \labl{def-HK}
of the functor $F$ that acts on objects as $(X,Y) \,{\mapsto}\, X^\vee{\otimes}\,Y$.
As described in some detail in Appendix \ref{bicoad-coend},
in the case $\C \eq \HBimod$ the object \HK\ is the \emph{\bicoad} \Haa. That is,
the underlying vector space is the tensor product $\Hs\otik\Hs$, and this space
is endowed with a left $H$-action by the coadjoint left action \erf{def_lads} on
the first factor, and with a right $H$-action by the coadjoint right action on the
second factor.

In the case of a general braided finite tensor category \C\ the
morphisms $\SK$ and $\TK$ in $\EndC(\HK)$ are 
defined with the help of the braiding $c$ and the twist $\theta$ of \C, respectively
\cite{lyma,lyub6,lyub8}. $\TK$ is given by the dinatural family
   \eqpic{p9} {92}{24} {
   \put(0,0)   {\Includepichtft{26b}
   \put(-11.1,39.7) {\small$\TK$}
   \put(-5,-8.5)  {\sse$ X^{\!\vee} $}
   \put(17.4,25.7){\sse$ \iHK_X $}
   \put(15.3,-8.5){\sse$ X $}
   \put(6.8,63.3) {\sse$ \HK $}
   }
   \put(47,28) {$=$}
   \put(83,0)  {\Includepichtft{26a}
   \put(-11.9,14) {\sse$ \theta_{\!X^{\!\vee}_{}}^{}$}
   \put(-3,-8.5)  {\sse$ X^{\!\vee} $}
   \put(8.4,63.3) {\sse$ \HK $}
   \put(18.9,37.7){\sse$ \iHK_X $}
   \put(17.4,-8.5){\sse$ X $}
   }
   \put(133,46)    {\catpic}
   }
Here it is used that a morphism $f$ with domain the coend \HK\ is uniquely
determined by the dinatural family $\{ f\cir \iHK_X\}$ of morphisms.
For $S$ one defines
   \be
   \SK := (\eps_\HK \oti \id_\HK) \circ \QQ_{\HK,\HK}
   \circ (\id_\HK \oti \Lambda_\HK) \,,
   \labl{S-HK}
where $\eps_\HK$ and $\Lambda_\HK$ are the counit and the two-sided integral of
the Hopf algebra \HK, respectively, while the morphism
$\QQ_{\HK,\HK} \iN \EndC(\HK\oti\HK)$ is determined through monodromies
$c_{Y^\vee_{\phantom,}\!,X} \cir c_{X,Y^\vee_{\phantom,}}$ according to
   \eqpic{QHH} {135} {47} {
   \put(0,0)  {\Includepichtft{103e}
   \put(-15.9,60.7){\small$ \QQ_{\HK,\HK} $}
   \put(-9.8,21)  {\sse$ \iHK_X $}
   \put(-4,-8.5)  {\sse$ X^{\!\vee} $}
   \put(5.3,109)  {\sse$ \HK $}
   \put(7,-8.5)   {\sse$ X $}
   \put(26.5,-8.5){\sse$ Y^{\!\vee} $}
   \put(28.8,109) {\sse$ \HK $}
   \put(38,-8.5)  {\sse$ Y $}
   \put(42.4,21)  {\sse$ \iHK_Y $}
   }
   \put(62,50)    {$ = $}
   \put(97,0) {\Includepichtft{103f}
   \put(-6,-8.5)  {\sse$ X^{\!\vee} $}
   \put(-5.8,89)  {\sse$ \iHK_X $}
   \put(8,-8.5)   {\sse$ X $}
   \put(6.1,109)  {\sse$ \HK $}
   \put(22.8,27.3){\sse$ c $}
   \put(22.8,58.4){\sse$ c $}
   \put(32,-8.5)  {\sse$ Y^{\!\vee} $}
   \put(36,109)   {\sse$ \HK $}
   \put(46.2,89)  {\sse$ \iHK_Y $}
   \put(46,-8.5)  {\sse$ Y $}
   }
   \put(186,91)    {\catpic}
   }

We also note that the Hopf algebra \HK\ is endowed with a Hopf pairing
$\omega_\HK$, given by \cite{lyub6}
   \be
   \omega_\HK^{} = (\eps_\HK \oti \eps_\HK ) \circ \QQ_{\HK,\HK} \,.
   \labl{HK-Hopa}


\subsection{The Drinfeld map}

Let us now specialize the latter formulas to the case of our interest, i.e.\
$\C \eq \HBimod$. Then the coend is $\HK \eq \Haa$, with dinatural family
$\iHK \eq \iHaa$ given by \erf{def_iHaa_X}, while the twist is given by
\erf{bimod-twist} and the braiding by \erf{bibraid}. Further, the structure
morphisms of the categorical Hopf algebra \Haa\ can be expressed through those
of the algebraic Hopf algebra $H$; in particular, the counit and integral are
$\eps\bicoa \eq \eta^\vee \oti \eta^\vee$ (see \erf{Haa-Hopf}) and
$\Lambda\bicoa \eq\lambda^\vee \oti \lambda^\vee$ (see \erf{Haa-integral}).
The monodromy morphism $\QQ_{\Haasm,\Haasm} \iN \EndHH(\Haa\oti\Haa)$ that was
introduced in \erf{QHH} then reads
   \eqpic{QQ_Haa} {230}{63} {
   \put(0,57)     {$ \QQ_{\Haasm,\Haasm} ~= $}
   \put(81,0)  { \Includepichtft{121b}
   \put(-4.3,-8.5) {\sse$ \Hss $}
   \put(20.7,-8.5) {\sse$ \Hss $}
   \put(29.4,139)  {\sse$ \Hss $}
   \put(44.5,4.3)  {\sse$ Q $}
   \put(54.8,139)  {\sse$ \Hss $}
   \put(62.8,13.1) {\sse$ \apoi $}
   \put(78.7,-8.5) {\sse$ \Hss $}
   \put(86.6,53.5) {\sse$ Q^{-1} $}
   \put(87.6,77.9) {\sse$ \apo $}
   \put(102.9,-8.5){\sse$ \Hss $}
   \put(112.2,139) {\sse$ \Hss $}
   \put(137.7,139) {\sse$ \Hss $}
   } }
while the general formulas for $\TK$ and $\SK$ specialize to the morphisms
   \Eqpic{rho_Haa_S,T} {420}{39} {
   \put(4,36)    {$ T\bicoa ~= $}
   \put(55,0)  { \Includepichtft{121a}
   \put(-4.4,-8.5) {\sse$ \Hss $}
   \put(12,15)     {\sse$ v $}
   \put(39.6,94)   {\sse$ \Hss $}
   \put(61.2,-8.5) {\sse$ \Hss $}
   \put(89.7,34.5) {\sse$ v^{-1} $}
   \put(105.3,94)  {\sse$ \Hss $}
   }
   \put(215,36)    {and}
   \put(276,36)    {$ S\bicoa ~= $}
   \put(327,-6) { \Includepichtft{121c}
   \put(-4.5,-8.5) {\sse$ \Hss $}
   \put(13,51.8)   {\sse$ Q^{-1} $}
   \put(26.5,-8.5) {\sse$ \Hss $}
   \put(32.5,87)   {\sse$ \lambda $}
   \put(44.6,106)  {\sse$ \Hss $}
   \put(47.4,12.1) {\sse$ Q $}
   \put(63.5,56)   {\sse$ \lambda $}
   \put(75.6,106)  {\sse$ \Hss $}
   } }
in $\EndHH(\Haa)$ .
Note that the morphism $S\bicoa$ is composed of (variants of) the
Frobenius map \erf{def_fmap} and the \emph{Drinfeld map}
   \be
   \drin := (d_H\oti \id_H) \circ (\idHs\oti Q) \,\in \Hom(\Hs,H) \,.
   \labl{def-drin}

In order that $S\bicoa$ is invertible, which is necessary for having projective
mapping class group \rep s, it is necessary and sufficient that the Drinfeld map
$\drin$ is invertible.

\begin{rem}
By the results for general \C, $S\bicoa$ is indeed a morphism in \HBimod.
But this is also easily checked directly: One just has to use that
the Drinfeld map intertwines the left coadjoint action $\rho\coa$ (see
\erf{def_lads}) of $H$ on \Hs\ and the left adjoint action $\rho_{\rm ad}$ 
of $H$ on itself \cite[Prop.\,2.5(5)]{coWe4}, i.e.\ that
   \be
   \drin \in \HomH(\Ha,H_{\rm ad}) \,,
   \labl{drin-intw}
together with the fact that the cointegral $\lambda$
satisfies (since $H$ is unimodular) \cite[Thm.\,3]{radf12}
   \be
   \lambda \circ m = \lambda \circ m \circ \tauHH \circ (\id_H\oti\apo^2) \,.
   \labl{lambda-Oapoo}
\end{rem}

\begin{rem}
The Drinfeld map $\drin$ of a \findim\ quasitriangular Hopf algebra $H$ is
invertible iff $H$ is factorizable. In the semisimple case, factorizability is
the essential ingredient for a Hopf algebra to be modular
\cite[Lemma\,8.2]{nitv}. Here, without any assumption of semisimplicity, we
see again a direct link between invertibility of $S$ and factorizability.
\end{rem}

\begin{rem}
The coadjoint $H$-module \Ha\ is actually the coend \erf{def-HK} for the
case that \C\ is the category \HMod\ of left $H$-modules. In this case the
endomorphism \erf{S-HK} is precisely the composition
   \be
   S\coa = \fmap \circ \drin
   \labl{S-coa}
of the Drinfeld and Frobenius maps (also called the quantum Fourier transform),
see e.g.\ \cite{lyma,fgst}. Further, the Drinfeld map $\drin$ is related to the
Hopf pairing $\omega_\Ha^{}$ from \erf{HK-Hopa} for the coend \Ha\ in \HMod\ by
   \be
   \drin = (\apo\oti\omega_\Ha^{}) \circ (\id_H \oti \tau_{\Hss,\Hss})
   \circ (b_H \oti \idHs) \,.
   \ee
In particular, since the antipode is invertible, the Drinfeld map of $H$ is
invertible iff the Hopf pairing of \Ha\ is non-degenerate.
\end{rem}

\begin{rem}
For factorizable $H$, the Drinfeld map $\drin$ maps any non-zero cointegral
$\lambda$ of $H$ to a non-zero integral $\Lambda$. Thus we may (and do) choose
$\lambda$ and $\Lambda$ (uniquely, up to a common factor $\pm1$) such that
besides $\lambda\cir\Lambda \eq 1$ we also have
   \be
   f_Q(\lambda) = \Lambda
   \labl{fQl=L}
(see \cite[Thm.\,2.3.2]{geWe} and \cite[Rem.\,2.4]{coWe5}). Together with
$\apo\cir\Lambda \eq \Lambda$ and the property \erf{lambda-Oapoo} of the cointegral
it then also follows that
   \be
   f_{Q^{-1}_{}}(\lambda) = \Lambda
   \qquad \mbox{as well as} \qquad f_{Q^{-1}_{}} \in \HomH(\Ha,H_{\rm ad}) \,,
   \labl{fQinv_intertw}
where $f_{Q^{-1}_{}}$ is the morphism \erf{def-drin} with the monodromy matrix
$Q$ replaced by its inverse $Q^{-1}$. Further, one has \cite[Lemma\,2.5]{coWe5}
   \be
   f_Q \circ \Psi \circ f_{Q^{-1}_{}} \circ \Psi = \id_H \,,
   \labl{fQ-Psi-fQi-Psi}
which in turn by comparison with \erf{S-coa} shows that
$\Psi \cir f_{Q^{-1}_{}} \eq S\coa^{-1}$. Further, the latter identity and
\erf{S-coa} are equivalent to the relations
   \Eqpic{fQS_Psi} {430} {41} { \put(0,-2){
   \put(0,0) { \Includepichtft{97e}
   \put(-2.5,100){\sse$ H $}
   \put(14.5,3.4){\sse$ Q $}
   \put(43.6,35) {\sse$ \apo $}
   \put(73.1,75.5) {\sse$ \lambda $}
   \put(128.3,3.2) {\sse$ Q^{-1} $}
   \put(130,100) {\sse$ H $}
   }
   \put(167,46)  {$ = $}
   \put(193,13) { \Includepichtft{97g}
   \put(-2.2,79) {\sse$ H $}
   \put(7.7,-1)  {\sse$ \Lambda $}
   \put(30.8,79) {\sse$ H $}
   }
   \put(256,46)  {$ = $}
   \put(294,0) { \Includepichtft{97f}
   \put(-2.5,100){\sse$ H $}
   \put(31.1,3.2){\sse$ Q^{-1} $}
   \put(39.5,32.6) {\sse$ \apoi $}
   \put(73.7,85.4) {\sse$ \lambda $}
   \put(112.5,3.4) {\sse$ Q $}
   \put(130,100) {\sse$ H $}
   } } }
\end{rem}

\begin{rem}
According to the first of the formulas \erf{rho_Haa_S,T} we have $T\bicoa^{} 
\eq T\coa^{-1} \oti T\coa^{}$. Using \erf{lambda-Oapoo} and \erf{S-coa} it 
follows that $S\bicoa^{} \eq S\coa^{-1} \oti S\coa^{}$ as well.
As a consequence, the first of the relations \erf{SK,TK} is realized
with $\lambda \eq 1$. Thus in the case of the category \HBimod\ of our interest,
with coend \HK, the projective representation \erf{slze-rep} of the mapping
class group \slze\ of the one-punctured torus is actually a \emph{genuine}
\slze-representation.
\end{rem}


\subsection{Action of \slze\ on morphism spaces}

Consider now the representation $\rho_{\HK,Y}$
of \slze\ on the spaces $\HomC(\HK,Y)$ just mentioned. The group \slze\
is generated by three elements $S$, $T$ and $D$, where $D$ is the
Dehn twist around the puncture, while $S$ and $T$ are modular transformations
that act on the surface in the same way as in the absence of the puncture.
The generators are subject to the relations $(ST)^3 \eq S^2$ (like for the modular
group) and $S^4 \eq D$. Of particular interest to us are \emph{invariants} of the
\slze-action on the spaces $\HomC(\HK,Y)$, i.e.\ morphisms $g$ satisfying
   \be
   g \circ \rho_{\HK,Y}^{}(\gamma) = g
   \labl{morph_K_F}
for all $\gamma\iN\slze$.

Morphisms in $\HomC(\HK,Y)$ can in particular be obtained by defining a linear map
$\Wiha$ from $\HomC(X,Y\oti X)$ to $\HomC(\HK,Y)$ as a universal partial trace,
to which we refer as the \emph{\pqt}. Thus for $f \iN \HomC(X,Y\oti X)$ we set
   \eqpic{def_wiha} {210} {67} {
   \put(-6,70)   {$ \wiha f ~:= $}
   \put(75,0)  {\Includepichtft{107c}
   \put(-23.2,50.5){\sse$ \Qq_{\HK,X} $}
   \put(-8.4,77) {\sse$ \eps_\HK^{} $}
   \put(1.1,-8.5){\sse$ \HK $}
   \put(18.8,149){\sse$ Y $}
   \put(23.1,97) {\sse$ f $}
   \put(24,76)   {\sse$ X $}
   \put(29.5,109){\sse$ X $}
   \put(61.3,75) {\sse$ \pi_X $}
   }
   \put(156,130) {\catpic}
   \put(165,70)  {$ \in ~\HomC(\HK,Y)~, $}
   }
where $\Qq_{\H,Y} \iN \EndC(\HK\oti Y)$ is defined by
   \eqpic{QHX} {110} {46} {
   \put(0,0)  {\Includepichtft{103a}
   \put(-15,60)  {$ \Qq_{\HK,Y} $}
   \put(-4,-8.5) {\sse$ X^{\!\vee} $}
   \put(5.3,109) {\sse$ \HK $}
   \put(7,-8.5)  {\sse$ X $}
   \put(31.3,-8.5){\sse$ Y $}
   \put(30.4,109){\sse$ Y $}
   }
   \put(60,50)   {$ := $}
   \put(94,0) {\Includepichtft{103b}
   \put(-6,-8.5) {\sse$ X^{\!\vee} $}
   \put(6.3,109) {\sse$ \HK $}
   \put(8,-8.5)  {\sse$ X $}
   \put(22.5,27.3) {\sse$ c $}
   \put(22.5,58.4) {\sse$ c $}
   \put(34,-8.5) {\sse$ Y $}
   \put(34.3,109){\sse$ Y $}
   }
   \put(167,91)    {\catpic}
   }
Note that, by the naturality of the braiding, the morphisms \erf{QHX} are
natural in $Y$.

\begin{rem}\label{rem:rep}
(i)~\,It follows from elementary properties of the braiding that for any
object $X$ of \C\ the morphism $(\eps_\HK^{} \oti \id_X) \cir \Qq_{\HK,X} $
endows $X$ with the structure of a \HK-module internal to \C.
\nxl1
(ii)~\,If \C\ is a (semisimple, strictly sovereign) modular tensor category, then
the invariance property \erf{morph_K_F} for $\gamma \eq S$ and $g \eq \wiha f$ is
equivalent to the definition of $S$-invariance of morphisms in $\HomC(Y\oti X,X)$
that is given in \cite[Def.\,3.1(i)]{koRu2}.
\end{rem}

\smallskip

Specializing now to $\,\C\eq\HBimod$ and $X \eq Y$ being the Frobenius algebra 
\Hb\ in \HBimod, we can state one of the main results of this paper:

\begin{thm}
The \pqt\ of the coproduct $\Delta\bico$ is \slze-invariant.
\end{thm}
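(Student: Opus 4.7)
The plan is to check invariance of $\Wiha(\Delta\bico)$ separately under each of the three generators $S$, $T$ and $D$ of \slze, by unfolding the definitions and manipulating the resulting graphical expressions using the structure of \Hb\ established in Sections \ref{sec2}\,--\,\ref{sec-dual}. The $D$-invariance is immediate: by \erf{slze-rep} the generator $D$ acts on $\HomHH(\HK,\Hb)$ as post-composition with $\theta_\Hb$, and Remark \ref{rem-trivtwist}(i) states that $\theta_\Hb \eq \idHs$, so the equality $\theta_\Hb \cir \Wiha(\Delta\bico) \eq \Wiha(\Delta\bico)$ holds automatically.

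For the $T$-generator one needs $\Wiha(\Delta\bico) \cir T\bicoa \eq \Wiha(\Delta\bico)$, with $T\bicoa$ given explicitly by the first formula in \erf{rho_Haa_S,T}. I would expand both sides pictorially. The insertions of $v$ and $v^{-1}$ contributed by $T\bicoa$ can be pushed through the partial monodromy trace onto the two \Hb-legs in the image of $\Delta\bico$, thereby producing an insertion of $\theta_\Hb \oti \theta_\Hb^{-1}$ after matching up sides. Triviality of $\theta_\Hb$ (Remark \ref{rem-trivtwist}(i)) then yields the desired equality. Sliding $v$ past the $Q$-contribution of the partial monodromy uses $\Delta\cir v \eq (v\oti v)\cdo Q^{-1}$ and centrality of $v$, while sliding past the $\Lambda$-insertion in $\Delta\bico$ uses $\apo\cir v \eq v$ together with unimodularity of $H$.

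The main obstacle is $S$-invariance, $\Wiha(\Delta\bico) \cir S\bicoa \eq \Wiha(\Delta\bico)$. My strategy is to draw the composite $\Wiha(\Delta\bico) \cir S\bicoa$ using the second formula in \erf{rho_Haa_S,T} (two copies of $Q$ and two copies of $\lambda$) and \erf{Deltaprime} (one copy of $\Lambda$), and then to simplify it in three stages. First, using the identities \erf{fQS_Psi} together with $f_Q(\lambda)\eq\Lambda$ from \erf{fQl=L}, the two $Q$--$\lambda$ pairs contributed by $S\bicoa$ can be collapsed against the $\Lambda$ coming from $\Delta\bico$, leaving an expression in which essentially only the Frobenius structure of \Hb\ together with the monodromy produced by the partial trace remains. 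Second, by commutativity of $m\bico$ with respect to the braiding \erf{bibraid}, symmetry of $\kappa\bico$, and the Frobenius relation \erf{FA}, the remaining tangle of coproducts, products, duality caps and cups can be disentangled into the shape of $\Wiha(\Delta\bico)$. Third, an application of the Hopf-Frobenius trick \erf{Hopf_Frob_trick} recognizes the outcome as the partial monodromy trace of $\Delta\bico$. The trickiest step will be to align the antipode and $\uvi$-insertions introduced by the right duality in \erf{def_wiha} and by the sovereign structure \erf{pivX} with the $Q$-contributions of the monodromy \erf{QHX}; this match should succeed thanks to $\uvi \eq u v^{-1}$, the formula \erf{u-R} expressing $u$ through the R-matrix, and the identity \erf{lambda-Oapoo} satisfied by the cointegral of a unimodular Hopf algebra.
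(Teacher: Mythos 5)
Your treatment of the generator $D$ is fine, and your $T$-argument, though it takes a different route from the paper, is plausible: the paper does not convert the $v$-insertions into twists, but rather first rewrites $\wiha{\Delta\bico}$ in the $Q$-free form \erf{Sinv-5} and then cancels $v$ against $v^{-1}$ by centrality and the Hopf--Frobenius trick \erf{Hopf_Frob_trick2}. If you want to keep your twist-based version, you still owe a computation: $T\bicoa$ supplies only one $v$ and one $v^{-1}$, so the claimed insertion of $\theta_\Hb\oti\theta_\Hb^{-1}$ on the two $\Hb$-legs (which involves two $v$'s and two $v^{-1}$'s) has to be produced by working the ribbon relation $\Delta\cir v\eq(v\oti v)\cdot Q^{-1}$ against the monodromy of the partial trace; ``matching up sides'' is not an argument.

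The genuine gap is in the $S$-invariance, which is the heart of the theorem. First, a bookkeeping problem: \erf{Deltaprime} contains the cointegral $\lambda$, not an integral $\Lambda$, and $S\bicoa$ contains one $Q$ and one $Q^{-1}$ (not two $Q$'s), so there is no ``collapse of the two $Q$--$\lambda$ pairs against the $\Lambda$ coming from $\Delta\bico$''. What actually happens in the paper is that the $Q^{\pm1}$ coming from the partial-trace monodromy $\Qq_{\HK,\Hb}$ are absorbed together with the $\lambda$ of $\Delta\bico$, using the intertwining property \erf{fQinv_intertw} of $f_{Q^{-1}}$ and the identities \erf{fQS_Psi}, to reach the $Q$-free expression \erf{Sinv-5}; only then is $S\bicoa^{-1}$ composed in and its own $Q^{\pm1}$--$\lambda$ combinations converted into integrals via \erf{fQS_Psi}. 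Second, and more importantly, your stages two and three are assertions rather than arguments: commutativity of $m\bico$, symmetry of $\kappa\bico$ and the Frobenius relation \erf{FA} play no role in the proof, and nothing in your outline shows that the leftover diagram really ``disentangles'' into $\wiha{\Delta\bico}$. The step you yourself flag as the trickiest is precisely where the content lies; in the paper it is carried by the concrete identities $\apo^2\eq\ad{\uvi}$, the left-cointegral relation $\lambda\cir m\cir(g\oti\id_H)\eq\lambda\cir\apo$, \erf{lambda-Oapoo}, $\apo^{\pm2}\cir\Lambda\eq\Lambda$ and the Frobenius-map inversion \erf{fmap_inv}, which reduce the $\Lambda$--$\lambda$ subdiagram in \erf{Sinv-7} and let \erf{Sinv-8} be recognized as \erf{Sinv-5}. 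Without these computations (or an equivalent substitute) the $S$-invariance remains unproven, so the proposal as it stands does not establish the theorem.
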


We will prove this statement by establishing, in Lemma \ref{lemmaT} and Lemma 
\ref{lemmaS} below, separately invariance under the two generators $T$ and $S$ of
\slze. Note that this implies in particular invariance under the Dehn twist $D$;
the latter can also be directly deduced from the fact that $D\bicoa \eq \theta_\Hb$
together with the result \erf{trivtwist} that \Hb\ has trivial twist.
Before investigating the action of $S$ and $T$, let us first present the
\pqt\ $\wiha {\Delta\bico}$ in a convenient form. To this end
we first note that, invoking the explicit form \erf{bibraid} of the braiding
and \erf{pivX} of the sovereign structure of \HBimod, we have
   \Eqpic{Sinv-1-2} {420} {49} { \put(-10,0){
   \put(29,8)  {\Includepichtft{123a}
   \put(-23,35.5){\sse$ \Qq_{\HK,\Hb} $}
   \put(-3.6,-8.5) {\sse$ \HK $}
   \put(7.8,-8.5){\sse$ \Hb $}
   \put(8.1,81.5){\sse$ \Hb $}
   }
   \put(63,43)   {$=$}
   \put(91,0)  {\Includepichtft{123b_1}
   \put(-5,-8.5) {\sse$ \Hss $}
   \put(7.8,-8.5){\sse$ \Hss $}
   \put(24,32)   {\sse$ Q^{-1} $}
   \put(26.5,7.2){\sse$ Q $}
   \put(40,-8.5) {\sse$ \Hss $}
   \put(73.5,104){\sse$ \Hss $}
   }
   \put(202,43)  {and hence}
   \put(280,43)  {$\wiha f~= $}
   \put(342,0) {\Includepichtft{123d_1}
   \put(-6,-8.5) {\sse$ \Hss $}
   \put(7,-8.5)  {\sse$ \Hss $}
   \put(25,39)   {\sse$ Q^{-1} $}
   \put(26.6,14.2) {\sse$ Q $}
   \put(82,78)   {\sse$ f $}
   \put(87.1,44.5) {\sse$ t^{-1} $}
   \put(71.4,116){\sse$ \Hss $}
   \put(112.2,58){\sse$ t^{-1} $}
   } } }
for any $f \iN \HomHH(\Hb,\Hb\oti\Hb)$. Specializing \erf{Sinv-1-2} to the \pqt\
of the coproduct $\Delta\bico$, i.e.\ inserting $\Delta\bico$ from
\erf{Deltaprime}, yields
   \eqpic{Sinv-3} {160} {63} {
   \put(0,62)    {$ \wiha {\Delta\bico}~= $}
   \put(72,0)  {\Includepichtft{123e_1}
   \put(-6,-8.5) {\sse$ \Hss $}
   \put(7,-8.5)  {\sse$ \Hss $}
   \put(24.9,78) {\sse$ Q^{-1} $}
   \put(26.6,51.6) {\sse$ Q $}
   \put(65,74)   {\sse$ \lambda $}
   \put(36.3,2.5){\sse$ t^{-1} $}
   \put(64.3,22) {\sse$ t^{-1} $}
   \put(90,149)  {\sse$ \Hss $}
   } }
This can be rewritten as follows:
   \eqpic{Sinv-4} {390} {61} {
   \put(-5,59)   { $\wiha {\Delta\bico}~=$ }
   \put(73,0)  {\Includepichtft{123f_1}
   \put(-5.5,-8.5) {\sse$ \Hss $}
   \put(7.4,-8.5){\sse$ \Hss $}
   \put(25.2,49) {\sse$ Q^{-1} $}
   \put(26.5,23) {\sse$ Q $}
   \put(72,118.7){\sse$ \lambda $}
   \put(81,139)  {\sse$ \Hss $}
   }
   \put(186,59)  {$=$}
   \put(219,0) {\Includepichtft{123g_1}
   \put(-4.5,-8.5) {\sse$ \Hss $}
   \put(10,-8.5) {\sse$ \Hss $}
   \put(11.5,33) {\sse$ Q^{-1} $}
   \put(29.5,10.8) {\sse$ Q $}
   \put(34.1,115){\sse$ \lambda $}
   \put(44,139)  {\sse$ \Hss $}
   }
   \put(284,59)  {$=$}
   \put(317,0) {\Includepichtft{123h_1}
   \put(-4.5,-8.5) {\sse$ \Hss $}
   \put(10,-8.5) {\sse$ \Hss $}
   \put(11.5,33) {\sse$ Q^{-1} $}
   \put(29.5,10.8) {\sse$ Q $}
   \put(34.1,115){\sse$ \lambda $}
   \put(44,139)  {\sse$ \Hss $}
   } }
Here in the first step it is used that $\apo^2\eq\ad t$
and that $\lambda \cir m \cir (g \oti \id_H) \eq \lambda \cir \apo$ (which is
a \emph{left} cointegral), while the second equality follows by the fact that
the antipode is an anti-algebra morphism and by associativity of $m$.
We can now use the fact (see \erf{fQinv_intertw}) that the morphism 
$f_{Q^{-1}_{}}$ intertwines the coadjoint and adjoint actions; we then have
   \eqpic{Sinv-5} {360} {49} {
   \put(-3,49)    {$ \wiha {\Delta\bico}~= $}
   \put(73,0)  {\Includepichtft{123i_1}
   \put(-5,-8.5) {\sse$ \Hss $}
   \put(32,-8.5) {\sse$ \Hss $}
   \put(12,11)   {\sse$ Q^{-1} $}
   \put(51.6,11) {\sse$ Q $}
   \put(49.7,69.5) {\sse$ \lambda $}
   \put(23,107)  {\sse$ \Hss $}
   }
   \put(153,49)  {$=$}
   \put(186,0) {\Includepichtft{123j_1}
   \put(-5,-8.5) {\sse$ \Hss $}
   \put(31,-8.5) {\sse$ \Hss $}
   \put(12,11)   {\sse$ Q^{-1}$}
   \put(45.6,11) {\sse$ Q $}
   \put(48.9,69.2) {\sse$ \lambda $}
   \put(23,107)  {\sse$ \Hss $}
   }
   \put(262,49)  {$=$}
       \put(295,0) {\Includepichtft{123k}
   \put(-5,-8.5) {\sse$ \Hss $}
   \put(8.2,-8.5){\sse$ \Hss $}
   \put(36,3.5)  {\sse$ \Lambda $}
   \put(46,107)  {\sse$ \Hss $}
   } }
where the last equality uses the first of the identities \erf{fQS_Psi}
together with the fact that the antipode is an anti-coalgebra morphism
and that $\apo\cir\Lambda \eq \Lambda$.

\begin{lemma}\label{lemmaT}
The morphism $\wiha{\Delta\bico}$ is $T$-invariant, i.e.\ satisfies
$\wiha{\Delta\bico} \circ T\bicoa \eq \wiha{\Delta\bico}$.
\end{lemma}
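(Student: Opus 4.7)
The plan is to prove the identity $\wiha{\Delta\bico} \cir T\bicoa = \wiha{\Delta\bico}$ by a direct diagrammatic computation, starting from the compact representation of $\wiha{\Delta\bico}$ established in the last equality of \erf{Sinv-5}. By the first formula in \erf{rho_Haa_S,T}, $T\bicoa\iN\EndHH(\Haa)$ is the endomorphism that inserts the ribbon element $v$ into the strand carrying the first $\Hss$ input and $v^{-1}$ into the strand carrying the second; pre-composing $\wiha{\Delta\bico}$ with $T\bicoa$ therefore modifies the diagram only by these two insertions at the bottom. The goal is to show that they may be removed.

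The key ingredients will be the standard properties of the ribbon element: $v$ is central, $\apo \cir v \eq v$, $\eps \cir v \eq 1$, and $\Delta \cir v \eq (v\oti v)\cdo Q^{-1}$ (equivalently, $\Delta \cir v^{-1} \eq Q \cdo (v^{-1}\oti v^{-1})$). Centrality lets us slide the two ribbon elements along any $H$-strand and through any product node. Whenever a factor encounters the antipode $\apo$ appearing in the final picture of \erf{Sinv-5}, the identity $\apo \cir v \eq v$ lets us push it through unchanged. Using these moves both factors can be transported onto the single strand that carries $\Lambda$, where they combine into $v^{-1}\cdot \Lambda \cdot v$; but centrality of $v$ gives $v^{-1}\cdot \Lambda \cdot v \eq \Lambda$, so the insertions cancel and we recover $\wiha{\Delta\bico}$.

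The main obstacle is bookkeeping: one must verify that after the sliding the two factors really meet on a common strand with opposite powers. This is not quite automatic, because the two ribbon insertions enter on strands that are separated by the coproduct of $\Lambda$ and by the antipode appearing on one branch. If a direct cancellation in the form \erf{Sinv-5} turns out to be awkward, the natural fallback is to work instead with the intermediate form \erf{Sinv-4}, where the $Q$-matrices are still explicit: the insertions $v\oti v^{-1}$ can then be combined with these $Q$'s via $\Delta(v^{\pm1})\eq (v^{\pm1}\oti v^{\pm1}) Q^{\mp1}$, producing a single factor $\Delta(v)$ (or $\Delta(v^{-1})$) that migrates onto the $\Lambda$-strand and cancels by centrality as before. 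Either route yields the claimed equality.
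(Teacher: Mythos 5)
Your starting point is the same as the paper's: take the compact form \erf{Sinv-5} of $\wiha{\Delta\bico}$, observe that precomposing with $T\bicoa$ merely inserts $v$ and $v^{-1}$ on the two input strands, and try to cancel them using centrality of $v$ and $\apo\cir v\eq v$. The gap sits exactly at the point you yourself flag as ``the main obstacle''. After sliding by centrality, the two ribbon insertions end up multiplied onto the \emph{two different legs} of $\Delta\cir\Lambda$, and centrality cannot take them any further: it lets you move an element along a strand and through product nodes, but not through the coproduct node that splits $\Lambda$ into its two legs. In particular they cannot be ``transported onto the single strand that carries $\Lambda$'': since $\Delta\cir v^{\pm1}\eq(v^{\pm1}\oti v^{\pm1})\cdot Q^{\mp1}$, the element $v\oti v^{-1}$ is not of the form $\Delta(c)$ for any $c\iN H$, so there is no way to push the pair below the coproduct and read it as $v^{-1}\Lambda\,v$. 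What actually brings the two factors together is the integral identity \erf{Hopf_Frob_trick2} (valid because $\Lambda$ is a two-sided integral of the unimodular algebra $H$), which transfers a factor multiplied onto one leg of $\Delta\cir\Lambda$ to the other leg at the cost of an antipode; with $\apo\cir v\eq v$ and centrality the transferred $v$ then cancels the $v^{-1}$. This is precisely the one ingredient, beyond centrality, that the paper's proof invokes, and it is absent from your argument.

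Your fallback via \erf{Sinv-4} does not close the gap either, as described. There the insertions are a $v$ attached to the $Q^{-1}$-factor and a $v^{-1}$ attached to the $Q$-factor, i.e.\ opposite powers on different monodromy factors, so the ribbon relation $\Delta\cir v^{\pm1}\eq(v^{\pm1}\oti v^{\pm1})\cdot Q^{\mp1}$, which combines \emph{equal} powers with a \emph{single} $Q^{\mp1}$, does not directly assemble them into one $\Delta(v^{\pm1})$; moreover in that form of the diagram only the cointegral $\lambda$ appears, so there is no ``$\Lambda$-strand'' for anything to migrate onto, and the same objection about passing through coproduct/evaluation nodes recurs. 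The shortest correct completion of your argument is: slide $v$ and $v^{-1}$ by centrality onto the two legs of $\Delta\cir\Lambda$ in \erf{Sinv-5}, then apply \erf{Hopf_Frob_trick2} together with $\apo\cir v\eq v$ to cancel them, which is the paper's proof.
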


\begin{proof}
Invoking the expressions for $T\bicoa$ given in \erf{rho_Haa_S,T}
and for $\wiha{\Delta\bico}$ given on the right hand side of \erf{Sinv-5}, and
using the centrality of the ribbon element $v\iN H$, we have
   \eqpic{T-inv} {190} {47} {
   \put(10,52)   {$ \wiha{\Delta\bico} \circ T\bicoa ~= $}
   \put(118,0) {\Includepichtft{123l}
   \put(-5,-8.5) {\sse$ \Hss $}
   \put(8.5,-8.5){\sse$ \Hss $}
   \put(23.5,13.5) {\sse$ v $}
   \put(49,13.5) {\sse$ v^{-1} $}
   \put(36,2.5)  {\sse$ \Lambda $}
   \put(45.2,112){\sse$ \Hss $}
   } }
Recalling now the identity \erf{Hopf_Frob_trick2}, the central elements $v$ and
$v^{-1}$ cancel each other, hence \erf{T-inv} equals $\wiha{\Delta\bico}$.
\end{proof}

\begin{lemma}\label{lemmaS}
The morphism $\wiha{\Delta\bico}$ is $S$-invariant, i.e.\ satisfies
$\wiha{\Delta\bico} \circ S\bicoa \eq \wiha{\Delta\bico}$.
\end{lemma}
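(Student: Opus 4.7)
The plan is to start from the simplified expression for $\wiha{\Delta\bico}$ on the right hand side of \erf{Sinv-5} and exploit the factorization $S\bicoa = S\coa^{-1} \oti S\coa$ noted in the remark preceding this subsection, where $S\coa = \fmap \cir \drin$ by \erf{S-coa}. Precomposing with $S\bicoa$ therefore inserts the operator $\fmap \cir f_{Q^{-1}}$ on the first $\Hs$ input of the diagram and $\fmap \cir \drin$ on the second, reducing the statement to an identity about how these compositions interact with the central $\Lambda$ in \erf{Sinv-5}.

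The next step is to push these operators into the interior of the diagram and eliminate them. The Drinfeld maps $\drin$ and $f_{Q^{-1}}$ intertwine the coadjoint action on $\Hs$ with the adjoint action on $H$ by \erf{drin-intw} and \erf{fQinv_intertw}, so they commute in the appropriate sense with the dualized multiplication; correspondingly, $\fmap$ converts algebra data on $H$ into coalgebra data on $\Hs$. After this reorganization a composite of the form $\fmap \cir \drin \cir \fmap \cir f_{Q^{-1}}$ should emerge, which by \erf{fQ-Psi-fQi-Psi} is the identity on $H$ and can be deleted. What remains must then be identified with $\wiha{\Delta\bico}$ using unimodularity ($\apo \cir \Lambda = \Lambda$), coassociativity of $\Delta$, the anti-(co)algebra morphism property of $\apo$, and the consequence \erf{Hopf_Frob_trick2} of the Hopf-Frobenius trick for $\Lambda$. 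The normalization \erf{fQl=L} ensures that no stray scalar factor is picked up in this reduction.

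The main obstacle is the asymmetric appearance of $\drin$ on one factor and $f_{Q^{-1}}$ on the other: cancellation is not automatic from symmetric-looking cointegral identities, and one must carefully track which monodromy ends up on which side of the central integral in the final picture, making essential use of the two parallel identities \erf{fQS_Psi}. In contrast to the $T$-invariance of Lemma \ref{lemmaT}, which reduces to a local cancellation of the central element $v$ by means of \erf{Hopf_Frob_trick2}, the $S$-invariance is a genuinely nonlocal statement and relies on the full force of factorizability of $H$ via the invertibility of $\drin$.
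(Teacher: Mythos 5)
Your proposal is correct and follows essentially the same route as the paper: the paper also proves the statement by expressing $S\bicoa$ (in fact its inverse) through the Drinfeld and Frobenius maps -- the identities \erf{fQS_Psi} are precisely the pictorial form of $S\coa^{}\eq\fmap\cir\drin$ and $S\coa^{-1}\eq\fmap\cir f_{Q^{-1}_{}}$ together with the normalization \erf{fQl=L} -- and then cancels the resulting monodromy/cointegral composites against the integral $\Lambda$ using unimodularity, $\apo^{\pm2}\cir\Lambda\eq\Lambda$, \erf{lambda-Oapoo} and the Frobenius inversion \erf{fmap_inv}, exactly the toolbox you invoke via \erf{fQ-Psi-fQi-Psi} and the factorization $S\bicoa\eq S\coa^{-1}\oti S\coa$. (Only a cosmetic slip: the composite $\fmap\cir\drin\cir\fmap\cir f_{Q^{-1}_{}}$ is the identity on $\Hs$, not on $H$.)
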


\begin{proof}
We will show that $\wiha{\Delta\bico}$ invariant under $S^{-1}$.
Applying definition \erf{rho_Haa_S,T}, we can use the identities \erf{fQS_Psi} to
obtain
   \eqpic{Sinv-6} {340} {64} {
   \put(7,63)  {$\wiha{\Delta\bico} \circ S\bicoa^{-1} ~=$}
       \put(117,0) {\Includepichtft{124a_1}
   \put(-5.4,-8.5) {\sse$ \Hss $}
   \put(7,-8.5)  {\sse$ \Hss $}
   \put(16.3,52.8) {\sse$ Q $}
   \put(31.9,53.2) {\sse$ Q^{-1} $}
   \put(23.7,2.2){\sse$ Q^{-1} $}
   \put(47.1,2.3){\sse$ Q $}
   \put(32.3,39) {\sse$ \lambda $}
   \put(34.5,91) {\sse$ \lambda $}
   \put(72,126)  {\sse$ \lambda $}
   \put(81,147)  {\sse$ \Hss $}
   }
   \put(225,63)  {$=$}
       \put(257,0) {\Includepichtft{124b}
   \put(-5,-8.5) {\sse$ \Hss $}
   \put(9,-8.5)  {\sse$ \Hss $}
   \put(26.5,48) {\sse$ \Lambda $}
   \put(40,9)    {\sse$ \Lambda $}
   \put(60.4,126){\sse$ \lambda $}
   \put(70.5,147){\sse$ \Hss $}
   } }
Further we note that owing to the identities \erf{lambda-Oapoo},
$\apo^{-2}\cir\Lambda \eq \Lambda$ and \erf{fmap_inv} we can write
   \eqpic{Sinv-7} {360} {46} {
   \put(0,0) {\Includepichtft{124c}
   \put(18,-8)   {\sse$ \Hss $}
   \put(30,-8)   {\sse$ \Hss $}
   \put(11,12)   {\sse$ \Lambda $}
   \put(31.5,86) {\sse$ \lambda $}
   \put(-3,106)  {\sse$ \Hss $}
   }
   \put(53,49)   {$ = $}
   \put(85,0)  {\Includepichtft{124d}
   \put(18,-8.5) {\sse$ \Hss $}
   \put(30,-8.5) {\sse$ \Hss $}
   \put(11,12)   {\sse$ \Lambda $}
   \put(31.5,86) {\sse$ \lambda $}
   \put(-3,106)  {\sse$ \Hss $}
   }
   \put(138,49)  {$=$}
   \put(170,0) {\Includepichtft{124e}
   \put(18,-8.5) {\sse$ \Hss $}
   \put(30,-8.5) {\sse$ \Hss $}
   \put(11,12)   {\sse$ \Lambda $}
   \put(31.5,86) {\sse$ \lambda $}
   \put(-3,106)  {\sse$ \Hss $}
   }
   \put(223,49)  {$=$}
   \put(255,0) {\Includepichtft{124f}
   \put(17.5,-8.5) {\sse$ \Hss $}
   \put(30,-8.5) {\sse$ \Hss $}
   \put(11.4,11) {\sse$ \Lambda $}
   \put(26.5,74) {\sse$ \lambda $}
   \put(-1.4,106){\sse$ \Hss $}
   }
   \put(307,49)  {$ = $}
   \put(338,16) {\Includepichtft{124g}
   \put(-5,-8.5) {\sse$ \Hss $}
   \put(8,-8)    {\sse$ \Hss $}
   \put(3,75)    {\sse$ \Hss $}
   } }
It follows that
   \eqpic{Sinv-8} {340} {63} {
   \put(4,67)  {$ \wiha{\Delta\bico} \circ S\bicoa^{-1} ~= $}
   \put(113,0) {\Includepichtft{124h}
   \put(-5,-8.5) {\sse$ \Hss $}
   \put(9,-8.5)  {\sse$ \Hss $}
   \put(40,9)    {\sse$ \Lambda $}
   \put(70.5,147){\sse$ \Hss $}
   }
   \put(220,67)  {$ = $}
   \put(251,0)   {\Includepichtft{124i}
   \put(-5,-8.5) {\sse$ \Hss $}
   \put(9,-8.5)  {\sse$ \Hss $}
   \put(40,9)    {\sse$ \Lambda $}
   \put(70.5,147){\sse$ \Hss $}
   } }
This coincides with the right hand side of \erf{Sinv-5} and thus with
$\wiha{\Delta\bico}$.
\end{proof}

To neatly summarize the results above we state

\begin{defi}\label{def:Fmodinv}
A coalgebra $(C,\Delta_C,\eps_C)$ in \HBimod\ is called \emph{modular invariant} iff
the morphism $\wiha{\Delta_C} \iN \HomHH(\HK,C)$ is \slze-invariant.
\end{defi}

Thus what we have shown can be rephrased as

\begin{cor}
The Frobenius algebra $\Hb\iN\HBimod$ introduced in \erf{def-Hb-Frobalgebra}
is modular invariant.
\end{cor}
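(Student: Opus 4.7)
The plan is to verify that the conditions of Definition \ref{def:Fmodinv} are satisfied for the Frobenius algebra \Hb, namely that the morphism $\wiha{\Delta\bico} \in \HomHH(\HK,\Hb)$ is invariant under the entire mapping class group \slze. Since \slze\ is generated by the three elements $S$, $T$ and $D$ subject to the relations $(ST)^3 \eq S^2$ and $S^4 \eq D$, it suffices to check invariance under each of these generators.

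First I would invoke Lemma \ref{lemmaT} directly to obtain $T$-invariance, $\wiha{\Delta\bico} \cir T\bicoa \eq \wiha{\Delta\bico}$, and Lemma \ref{lemmaS} to obtain $S$-invariance, $\wiha{\Delta\bico} \cir S\bicoa \eq \wiha{\Delta\bico}$. These are precisely the two hard computations, and they have already been carried out; no additional work is required for these generators.

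It then remains to handle the Dehn twist generator $D$. According to \erf{slze-rep}, the action of $D$ on a morphism $g \iN \HomHH(\HK,\Hb)$ is given by post-composition with the twist $\theta_\Hb$, i.e.\ $(D,g) \,{\mapsto}\, \theta_\Hb \cir g$. By Remark \ref{rem-trivtwist}(i) the twist of \Hb\ is trivial, $\theta_\Hb \eq \idHs$, so $\theta_\Hb \cir \wiha{\Delta\bico} \eq \wiha{\Delta\bico}$, giving $D$-invariance. (Alternatively, the relation $D \eq S^4$ shows that $D$-invariance is automatic once $S$-invariance is established.)

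Combining the invariances under the generators $S$, $T$ and $D$, we conclude that $\wiha{\Delta\bico}$ is invariant under every element of \slze. Hence by Definition \ref{def:Fmodinv} the coalgebra structure $(\Hb,\Delta\bico,\eps\bico)$, and thus the Frobenius algebra \Hb, is modular invariant. There is no real obstacle here; the corollary is a packaging of Lemmas \ref{lemmaT} and \ref{lemmaS} together with the triviality of the twist noted in Remark \ref{rem-trivtwist}(i), so the proof reduces to assembling these ingredients and citing the presentation of \slze.
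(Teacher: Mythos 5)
Your proposal is correct and follows essentially the same route as the paper: invariance under $T$ and $S$ is cited from Lemmas \ref{lemmaT} and \ref{lemmaS}, and invariance under the Dehn twist $D$ is deduced from the triviality of the twist of \Hb\ (the paper also notes, as you do, that $D$-invariance alternatively follows from $S$-invariance via the relation $S^4 \eq D$). No gaps.
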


\begin{proof}
Invariance of $\wiha{\Delta_C}$ under the action of the generators $T$ and $S$
of \slze\ has been shown in Lemma \ref{lemmaT} and \ref{lemmaS}. Invariance
under the action of the generator $D$ of \slze\ follows immediately from
the fact that \Hb\ has trivial twist.
\end{proof}

\begin{rem}\label{Cmatrix}
(i)\,~ The morphism \,~$\wiha{\Delta\bico}$ is non-zero. Indeed one can show that 
$\eps\bico \cir \wiha{\Delta\bico}$ can be expanded as a bilinear form in
the simple characters of Lyubashenko's Hopf algebra $\H\iN\HMod$, with coefficients 
given by the Cartan matrix of $H$, i.e.\ by the matrix that describes the
composition series of indecomposable projectives, and these \H-characters are non-zero. 
In conformal field theory terms, this means that the Cartan matrix -- which is a 
quantity directly associated to the category -- is the right substitute of the 
charge conjugation matrix in the non-semisimple case. However, establishing this 
result requires methods different from those on which our focus is in this paper.
\nxl1
(ii)\,~In the same way as the \pqt\ \erf{def_wiha} associates a morphism in $\HomC(\HK,Y)$
to a morphism in $\HomC(X,Y\oti X)$, one may introduce another \pqt\ $\tauQ$ that
maps morphisms in $\HomC(X\oti Y,X)$ linearly to morphism in $\HomC(\HK\oti Y,\one)$,
say as
   \eqpic{pic128} {155} {33} {
   \put(0,37)    {$ \tauQ(f) ~:= $}
   \put(87,0)  { \Includepichtft{128}
   \put(-2,-8.5) {\sse$ \HK $}
   \put(39,-8.5) {\sse$ Y $}
   \put(-23,58)  {\sse$ \Qq_{\HK,Y} $}
   \put(15.3,38) {\sse$ \pi_X^{-1} $}
   \put(37.7,57.4) {\sse$ f $}
   \put(41,69.4) {\sse$ X $}
   } }
It is not difficult to check that the morphism $\tauQ(m\bico)$ obtained this way
from the product of the Frobenius algebra \Hb\ is modular invariant in the sense that
$\tauQ(m\bico) \cir \rho_{\HK\otimes F,\one}(\gamma) \eq \tauQ(m\bico)$
for all $\gamma\iN\slze$.
Indeed, $\tauQ(m\bico)$ is related to $\wiha{\Delta\bico}$ by
   \eqpic{tauQm} {150} {43} {
   \put(0,45)    {$ \tauQ(m_{\bico}) ~= $}
   \put(77,0)  {\Includepichtft{127}
   \put(11,-8.5) {\sse$ \Hss $}
   \put(27.5,-8.5){\sse$ \Hss $}
   \put(51,-8.5) {\sse$ \Hss $}
   \put(-4.5,53) {\sse$ \wiha{\Delta_{\bico}} $}
   \put(35,75.7) {\sse$ \fmap^{-1} $}
   } }
with $\fmap$ the Frobenius map, and as a consequence (using that
$(\apo^{-2})^\wee\oti\idHs$ commutes with the action of \slze\ and that
$\tau_{\Hss,\Hss}\cir S\bicoa^{} \eq S\bicoa^{-1}$
and $\tau_{\Hss,\Hss}\cir T\bicoa^{} \eq T\bicoa^{-1}$)
modular invariance of $\tauQ(m\bico)$ is equivalent
to modular invariance of $\wiha{\Delta\bico}$. Accordingly, from the perspective
of \HBimod\ alone we could as well have referred to algebras rather than coalgebras
in Def.\ \ref{def:Fmodinv}. Indeed, this is the option that was chosen for
the semisimple case in \cite[Def.\,3.1(ii)]{koRu2}. Our preference for coalgebras
derives from the fact that, as described in Appendix \ref{app:cft}, the morphism
space $\HomC(\HK,\Hb)$ plays a more direct role than $\HomC(\HK\oti\Hb,\one)$ in
the motivating context of modular functors and conformal field theory.
\nxl1
(iii)\,~More generally, for any non-negative integers $m$ and $n$, the mapping class
group \slzmn\ of the $(m{+}n)$-punctured torus acts on the morphism space
$\HomC(K,\Hb^{\otimes m+n})$ \cite[Sect.\,4.3]{lyub6} and thus, using
the canonical isomorphism of the Frobenius algebra \Hb\ with its dual,
also on $\HomC(K\oti \Hb^{\otimes m}, \Hb^{\otimes n})$. By suitably composing
the morphisms \erf{Sinv-3} and \erf{pic128} with product and coproduct morphisms
of \Hb, one easily constructs a morphism in
$\HomC(K\oti \Hb^{\otimes m}, \Hb^{\otimes n})$ that, owing to commutativity
and cocommutativity of \Hb\ and to the triviality of its twist,
is invariant under this action of \slzmn.
\end{rem}

         \newpage


\section{The case of non-trivial Hopf algebra automorphisms}\label{sec-auto}

A \emph{Hopf algebra automorphism} of a Hopf algebra $H$ is a linear map from
$H$ to $H$ that is both an algebra and a coalgebra automorphism and commutes with
the antipode. For $H$ a ribbon Hopf algebra with R-matrix $R$ and ribbon
element $v$, an automorphism $\omega$ of $H$ is said to be a \emph{ribbon Hopf
algebra automorphism} iff $(\omega\oti\omega)(R) \eq R$ and $\omega(v) \eq v$.
For any $H$-bi\-mo\-dule $(X,\rho,\ohr)$ and any pair of algebra automorphisms
$\omega,\omega'$ of $H$ there is a corresponding
$(\omega,\omega')$-\emph{twisted bimodule} ${}^{\omega\!}X^{\omega'}
\eq (X,\rho\cir(\omega\oti\id_X),\ohr\cir(\id_X\oti\omega'))$. If $\omega$ and
$\omega'$ are \emph{Hopf} algebra automorphisms, then the twisting is compatible
with the monoidal structure of \HBimod, and if they are even \emph{ribbon} Hopf
algebra automorphisms, then it is compatible with the ribbon structure of \HBimod.

In this section we observe that to any \findim\ factorizable ribbon Hopf algebra
$H$ and any ribbon Hopf algebra automorphism of $H$ there is again associated a
Frobenius algebra in \HBimod, which moreover shares all the properties, in particular
modular invariance, of the Frobenius algebra \Hb\ that we obtained in the previous
sections. The arguments needed to establish this result are simple modifications
of those used previously. Accordingly we will be quite brief.

\begin{prop}
{\rm (i)}\,~For $H$ a \findim\ factorizable ribbon Hopf algebra over \ko\ and $\omega$
a Hopf algebra automorphism of $H$, the bimodule $\Hbo \,{:=}\,\, {}^{\idsm_H\!}
(\Hb)^{\omega}_{}$ carries the structure of a Frobenius algebra.
The structure morphisms of \Hbo\ as a Frobenius algebra are given by the formulas
\erf{def-Hb-Frobalgebra} (thus as linear maps they are the same as for
$\Hb \,{\equiv}\, \Hbi$).
\nxl1
{\rm (ii)}\,~\Hbo\ is commutative and symmetric, and it is special iff $H$ is semisimple.
\nxl1
{\rm (iii)}\,~If $\omega$ is a ribbon Hopf algebra automorphism, then \Hbo\ is
modular invariant.
\end{prop}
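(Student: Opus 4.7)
The proof strategy is a systematic ``$\omega$-chase'' through the arguments of Sections \ref{sec2}--\ref{sec-modinv}. Since $\Hbo$ differs from $\Hb$ only in that its right action is $\bohr_\omega \df \bohr \cir (\idHs \oti \omega)$, while the left action, the underlying vector space, and the defining linear maps $m\bico,\eta\bico,\Delta\bico,\eps\bico$ are identical, every property that holds in the untwisted case lifts to $\Hbo$ once one shows that the occurrences of $\omega$ either cancel or can be pushed past the relevant structural maps of $H$. The inputs that drive these cancellations are the defining equations of a Hopf algebra automorphism, $\Delta \cir \omega \eq (\omega\oti\omega) \cir \Delta$, $\eps \cir \omega \eq \eps$, $\omega \cir \apo \eq \apo \cir \omega$, $\omega\cir\eta\eq\eta$, $m \cir (\omega\oti\omega) \eq \omega \cir m$, and, in the ribbon case, the additional identities $(\omega\oti\omega)(R) \eq R$ and $\omega(v) \eq v$ (which in turn force $\omega(u)\eq u$ and $\omega(\uvi)\eq\uvi$, as well as $\omega(\Lambda)\eq\Lambda$ and $\lambda\cir\omega\eq\lambda$ after suitable normalization, since integrals and cointegrals are unique up to scalar).

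For part~(i), the algebra, coalgebra, and Frobenius axioms \erf{FA} as identities among linear maps are independent of the bimodule structure and have already been verified for $\Hb$. The only thing to recheck is that the four structural maps are morphisms in \HBimod\ relative to the twisted right action. Using $\Delta\cir\omega \eq (\omega\oti\omega)\cir\Delta$ one obtains $\bohr_{\Hbo\oti\Hbo} \eq \bohr_{\Hb\oti\Hb} \cir (\idHs\oti\idHs\oti\omega)$, and analogously the right $H$-action on the monoidal unit $\ko_\eps$ is unchanged because $\eps\cir\omega\eq\eps$. Hence each right-module-morphism diagram for $\Hbo$ is obtained from the corresponding diagram for $\Hb$ by precomposition with $\omega$ on the $H$-input, and thus follows at once from the proofs of \erf{mb_right}, \erf{etab_right}, \erf{Deltab_right} and \erf{epsb_left}. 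The left-module-morphism properties are literally unchanged.

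For part~(ii), commutativity $m\bico \cir c_{\Hbo,\Hbo} \eq m\bico$ is established by repeating the manipulations in \erf{Hb_comm}: the R-matrix now appears composed with $\omega$ on the strands corresponding to the twisted right action, but since $\omega$ commutes with $\apo$, the relations $(\apo\oti\apo)\cir R^{-1}\eq R^{-1}$ and the intertwining property of $R$ with $\Delta$ continue to do their work (when $\omega$ is ribbon, one can additionally absorb $\omega$ via $(\omega\oti\omega)(R)\eq R$; otherwise, the $\omega$ factor is simply carried along and pops back out on the $m\bico$ side). Symmetry reduces to verifying \erf{symm_Hb} with $\kappa\bico$; the only ingredients used are Radford's formula $g \eq \uvi^2$ for the distinguished group-like element, the intertwining properties of integrals, and $\apo^2 \eq \ad\uvi$, all of which are intrinsic to $H$ and therefore automatically $\omega$-compatible since $\omega(\uvi) \eq \uvi$ (this is where the ribbon condition $\omega(v)\eq v$ would be used; for a general Hopf automorphism one uses that $g$ and $\uvi^2$ are intrinsic and satisfies $\omega(g)\eq g$). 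Finally, specialness amounts to the two scalar equations $\eps\bico\cir\eta\bico\eq\eps\cir\Lambda$ and $m\bico\cir\Delta\bico \eq (\lambda\cir\eps)\,\idHs$ of \erf{special}, both of which are unchanged and therefore yield the equivalence with semisimplicity of $H$ exactly as before.

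For part~(iii) one has to show that, with $\omega$ now a ribbon Hopf algebra automorphism, the partial monodromy trace $\wiha{\Delta\bico} \iN \HomHH(\HK,\Hbo)$ is still \slze-invariant. Since the braiding, twist, and Drinfeld map are defined in terms of $R$, $v$ and the antipode, the ribbon conditions on $\omega$ ensure that the endomorphisms $S\bicoa$ and $T\bicoa$ of $\HK$ are unaffected by twisting and that the computation \erf{Sinv-5} expressing $\wiha{\Delta\bico}$ in closed form goes through verbatim, with $\omega$ appearing only at the final $\bohr$-leg and being annihilated by the properties of $\Lambda$ and $\lambda$. The proofs of Lemmas \ref{lemmaT} and \ref{lemmaS} then apply, using centrality of $v$ with $\omega(v)\eq v$ for $T$-invariance, and the identities \erf{fQS_Psi} together with $(\omega\oti\omega)(Q)\eq Q$ for $S$-invariance; invariance under the Dehn twist $D$ follows from $\theta_\Hbo \eq \idHs$, which is an immediate consequence of \erf{bimod-twist} and $\omega(v)\eq v$. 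The main technical nuisance -- hence the step I expect to be the real bookkeeping obstacle -- is to verify that in the $S$-invariance computation the various insertions of $\omega$ arising from twisting the right actions really do cancel against each other via the ribbon-automorphism identities; but this is a direct, if tedious, diagrammatic check and introduces no new ideas beyond those already used for $\omega\eq\id_H$.
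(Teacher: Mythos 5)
Your overall route is the same as the paper's: since the structure maps of \Hbo\ coincide with those of \Hb\ as linear maps, everything reduces to rechecking intertwining with the twisted right actions, and your part (i) (precompose each right-module diagram with $\omega$ on the $H$-input, using $\Delta\cir\omega \eq (\omega\oti\omega)\cir\Delta$ and $\eps\cir\omega \eq \eps$) is exactly the paper's two facts that $(\Hb\oti\Hb)^\omega \eq \Hbo\oti\Hbo$ and that a linear map lies in $\HomHH(X,Y)$ iff it lies in $\HomHH(X^\omega,Y^\omega)$; in particular no computation from Sections \ref{sec2}--\ref{sec-modinv} actually has to be re-run for the Frobenius and specialness statements, and your treatment of those points is fine.

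Two of your specific claims do not hold as stated, and they matter precisely where the braiding, sovereign structure and coend data are evaluated at the twisted object. First, you cannot ``normalize'' $\omega(\Lambda)\eq\Lambda$ and $\lambda\cir\omega\eq\lambda$: uniqueness of (co)integrals only gives $\omega(\Lambda)\eq c\,\Lambda$, and rescaling $\Lambda$ does not change $c$; with the normalization \erf{fQl=L} and $(\omega\oti\omega)(Q)\eq Q$ one can show $c^2\eq1$, but not $c\eq1$ (luckily this identity is not needed if one argues as the paper does). Second, your escape for commutativity under a general, non-ribbon Hopf automorphism -- ``the $\omega$ factor is simply carried along and pops back out on the $m\bico$ side'' -- is not an argument and cannot work: in $c_{\Hbo,\Hbo}$ only the right-action legs are twisted, so the half of the braiding \erf{bibraid} fed into the right actions becomes $(\omega\oti\omega)$ of the corresponding element built from $R$, while the other half is unchanged; evaluating the desired identity $m\bico\cir c_{\Hbo,\Hbo}\eq m\bico$ on the unit of $H$ (where $\Delta(1)\eq1\oti1$) forces the product in $H\oti H$ of the untwisted half with the twisted half to equal $1\oti1$, and since the two halves are mutually inverse and built from $R^{\pm1}$ this amounts to $(\omega\oti\omega)(R)\eq R$. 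Similarly, the symmetry check at \Hbo\ involves $\uvi$ through the twisted right action, i.e.\ effectively $\omega(\uvi)$, and $\omega(g)\eq g$ only yields $\omega(\uvi)^2\eq\uvi^2$, not $\omega(\uvi)\eq\uvi$. So for commutativity and symmetry (as for (iii)) you genuinely need compatibility of $\omega$ with the quasitriangular/ribbon data -- this is exactly the point at which the paper's own proof inserts the parenthesis that $\omega$ is compatible with the ribbon structure of \HBimod; your write-up should invoke that hypothesis there rather than suggest the $\omega$'s cancel for free.
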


\begin{proof}
The proofs of all statements are completely parallel to those in the case
$\omega \eq \id_H$. The only difference is that the various morphisms one
deals with, albeit coinciding as linear maps with those encountered before,
are now morphisms between different $H$-bimodules than previously.
That they do intertwine the relevant bimodule structures follows by
combining the simple facts that (since $\omega$ is compatible with the ribbon
structure of \HBimod) $(\Hb\oti\Hb)^\omega \eq \Hbo \oti \Hbo$ as a bimodule and
that a linear map $f \iN \Hom(X,Y)$ for $X,Y\iN\HBimod$ lies in the subspace
$\HomHH(X,Y)$ iff it lies in the subspace $\HomHH(X^\omega,Y^\omega)$.
\end{proof}

Furthermore, again the Frobenius algebra \Hbo\ is canonically associated with
\HBimod\ as an abstract category. Indeed, analogously as in Proposition
\ref{prop-F=coend}, one sees that \Hbo\ can be constructed as a coend, namely
the one of the functor $\Fbxo \colon \HMod\op \Times \HMod \To \HBimod$
that acts on morphisms as $f\Times g\mapsto f^\vee\otik g$ and on objects
by mapping $(X,\rho_X) \Times (Y,\rho_Y)$ to
   \be
   \big( X^\wee\otik Y\,,\, [\rho_{X^\vee_{\phantom:}}^{}
   \cir (\omega^{-1}\oti\idXs)] \oti\id_Y\,,\,
   \idXs \oti(\rho_Y\cir \tau_{Y,H}^{} \cir (\id_Y\oti\apo^{-1})) \big)
   \ee
(or, in other words, $\Fbxo \eq (?^{\omega^{-1}\!}_{} {\times}\, \Id) \cir \Fbx$
with the functor $\Fbx$ (whose coend is \Hb) given by \erf{Fbx}):

\begin{prop}
The $H$-bimodule \Hbo\ together with the dinatural family of morphisms
   \be
   \iHbo_X := (\omega^{-1})^* \circ \iHb_X \,,
   \ee
with $\iHb_X$ as defined in \erf{def-iHbX}, is the coend of the functor $\Fbxo$.
\end{prop}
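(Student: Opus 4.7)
The plan is to reduce this proposition to Proposition~\ref{prop-F=coend} (the $\omega\eq\id_H$ case) by noting that the ribbon Hopf algebra automorphism $\omega$ induces an auto-equivalence of \HBimod. Concretely, the functor $T_\omega\colon\HBimod\To\HBimod$ defined on objects by $(X,\rho,\ohr)\,{\mapsto}\,(X,\rho,\ohr\cir(\id_X\oti\omega))$ is an auto-equivalence with inverse $T_{\omega^{-1}}$; in these terms one has $\Hbo\eq T_\omega(\Hb)$, and $\Fbxo$ is related to $\Fbx$ by an $\omega$-twist whose effect on the dinatural family is precisely postcomposition with $(\omega^{-1})^*$.

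First I would verify that $\iHbo_X\,{:=}\,(\omega^{-1})^*\cir\iHb_X$ is a morphism in $\HomHH(\Fbxo(X,X),\Hbo)$. Since $\omega$ is an algebra automorphism, the dual map $(\omega^{-1})^*\colon\Hss\To\Hss$ intertwines the standard right action $\bohr$ on \Hb\ with the $\omega$-twisted action $\bohr\cir(\id\oti\omega)$ defining \Hbo, and likewise converts the $\omega^{-1}$-twisted left action on the first tensorand of $\Fbxo(X,X)$ back into the standard left action on \Hb\ under which $\iHb_X$ is equivariant. Dinaturality of the family $\{\iHbo_X\}$ with respect to $\Fbxo$ then follows by postcomposing the known dinaturality squares for $\{\iHb_X\}$ with the fixed map $(\omega^{-1})^*$.

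For the universal property, consider any $Z\iN\HBimod$ and any dinatural family $\phi_X\iN\HomHH(\Fbxo(X,X),Z)$. The composites $\phi_X\cir(\omega^*\oti\id_X)$, reinterpreted via the twist as morphisms $\Fbx(X,X)\To T_{\omega^{-1}}(Z)$, form a dinatural family with respect to $\Fbx$. By Proposition~\ref{prop-F=coend} this family factors uniquely as $\hat\psi\cir\iHb_X$ for some $\hat\psi\iN\HomHH(\Hb,T_{\omega^{-1}}(Z))$. Setting $\hat\phi\,{:=}\,T_\omega(\hat\psi)\iN\HomHH(\Hbo,Z)$ then yields the required factorization $\phi_X\eq\hat\phi\cir\iHbo_X$, with uniqueness transporting across the equivalence.

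The main obstacle is essentially bookkeeping: tracking how $\omega$ distributes across the various twisted bimodule structures on $\Fbxo(X,X)$, \Hbo, and their tensor products and duals. The only algebraic input required is that $\omega$ is both an algebra and a coalgebra automorphism; the ribbon conditions $(\omega\oti\omega)(R)\eq R$ and $\omega(v)\eq v$ do not enter at this stage, as they pertain to compatibility with the braiding and twist of \HBimod\ rather than with the coend property itself.
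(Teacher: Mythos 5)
Your first paragraph and the dinaturality remark are correct and are in substance what the paper does: its proof simply repeats the direct argument for Proposition \ref{prop-F=coend} with $\omega^{\pm1}$ inserted at the appropriate places, the displayed identities \erf{biregIToml} and \erf{biregITomr} being exactly your two intertwining claims for $\iHbo_X\eq(\omega^{-1})^*\cir\iHb_X$. Your closing observation that only the Hopf-automorphism properties of $\omega$ (compatibility with $m$, $\Delta$ and $\apo$) enter, and not the ribbon conditions, is also right.

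The universal-property step, however, has a genuine gap. First, the composite $\phi_X\cir(\omega^*\oti\id_X)$ is not defined: the first tensor factor of $\Fbxo(X,X)$ is $X^\vee$, not $\Hs$, so $\omega^*$ has nowhere to act. Behind this is a misidentification of the twist: from the definition of $\Fbxo$ one reads off $\Fbxo(X,Y)\eq{}^{\omega^{-1}\!}\big(\Fbx(X,Y)\big){}^{\idsm_H}\eq\Fbx\big((X,\rho_X\cir(\omega^{-1}\oti\id_X)),Y\big)$, i.e.\ $\Fbxo$ differs from $\Fbx$ by the \emph{left} twist by $\omega^{-1}$, not by your right-twist functor $T_\omega$; for a general bimodule $M$ the objects ${}^{\omega^{-1}\!}(M)^{\idsm_H}$ and ${}^{\idsm_H\!}(M)^{\omega}$ are not isomorphic, so the two twists cannot be interchanged silently. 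Consequently a dinatural family $\phi_X\iN\HomHH(\Fbxo(X,X),Z)$ is, with the same underlying linear maps, a dinatural family $\Fbx(X,X)\To{}^{\omega\!}(Z)^{\idsm_H}$, and \emph{not} one with target $T_{\omega^{-1}}(Z)$ as you claim; so the factorization $\phi_X\eq\hat\phi\cir\iHbo_X$ does not follow as written. The strategy is repairable: since the left-twist functor is an isomorphism of categories acting as the identity on morphisms, Proposition \ref{prop-F=coend} immediately gives that $\big({}^{\omega^{-1}\!}(\Hb)^{\idsm_H},\{\iHb_X\}\big)$ is a coend of $\Fbxo$; to arrive at the statement one must then transport along the bimodule isomorphism $(\omega^{-1})^*\colon{}^{\omega^{-1}\!}(\Hb)^{\idsm_H}\To\Hbo$ (alternatively, reindex the dinatural family along $X\mapsto(X,\rho_X\cir(\omega\oti\id_X))$ in \HMod\ and use $\iHb_{(X,\rho_X\circ(\omega^{-1}\otimes\,\idsm_X))}\eq(\omega^{-1})^*\cir\iHb_X$). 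This isomorphism is precisely where the factor $(\omega^{-1})^*$ in the stated dinatural family comes from, it requires a (small) computation of the same kind as your first paragraph, and your proposal never establishes it.
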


\begin{proof}
Again the proof is parallel to the one for the case $\omega\eq\id_H$, the
difference being that the automorphisms $\omega^{\pm1}$ need to be inserted at
appropriate places. For instance, the equalities
   \eqpic{biregIToml} {420} {46} { \put(-86,4){
   \put(121,0)  {\Includepichtft{125a}
   \put(-5,-8.5) {\sse$ H $}
   \put(-2.1,38.7) {\sse$ \rho_{\!X^{\!\vee}_{}}^{} $}
   \put(16,48.5) {\sse$ \rho_{\!X}^{} $}
   \put(6,-8.5)  {\sse$\Xs $}
   \put(24,-8.5) {\sse$ X $}
   \put(36.5,95) {\sse$ \Hss $}
   \put(-10.5,12){\sse$\omegi$}
   \put(45.3,72) {\sse$\omegis$}
   }
   \put(184,44)  {$=$}
   \put(208,0)   {\Includepichtft{125b}
   \put(-4,-8.5) {\sse$ H $}
   \put(8,-8.5)  {\sse$ \Xs $}
   \put(34,-8.5) {\sse$ X $}
   \put(47,101)  {\sse$ \Hss $}
   \put(16,44.5) {\sse$\omegi$}
   \put(19.5,19) {\sse$\omegi$}
   }
   \put(279,44)   {$=$}
   \put(297,0) { \Includepichtft{125c}
   \put(-4,-8.5) {\sse$ H $}
   \put(10.6,-8.5) {\sse$ \Xs $}
   \put(51,-8.5) {\sse$ X $}
   \put(71,109)  {\sse$ \Hss $}
   \put(26,29)   {\sse\begin{turn}{90}$\omegis$\end{turn}}
   }
   \put(395,44)   {$=$}
   \put(420,0) { \Includepichtft{125d}
   \put(-4,-8)   {\sse$H $}
   \put(8,-8.5)  {\sse$\Xs$}
   \put(16.3,36.5) {\sse$ \rho_{\!X}^{} $}
   \put(24,-8.5) {\sse$X$}
   \put(37,109)  {\sse$ \Hss $}
   \put(42.5,86.7) {\sse$ \brho $}
   \put(45.3,62) {\sse$\omegis$}
   } } }
and
   \eqpic{biregITomr} {340} {42} {
   \put(0,0)   {\Includepichtft{126a}
   \put(-5,-8.5) {\sse$\Xs $}
   \put(13,-8.5) {\sse$X$}
   \put(25,-8.5) {\sse$H$}
   \put(25,100)  {\sse$ \Hss $}
   \put(33.9,80) {\sse$\omegis$}
   }
   \put(52,38)   {$=$}
   \put(85,0) { \Includepichtft{126b}
   \put(-5,-8.5) {\sse$\Xs $}
   \put(18,-8.5) {\sse$X$}
   \put(30,-8.5) {\sse$H$}
   \put(31,100)  {\sse$ \Hss $}
   \put(3.7,55)  {\sse$\omegi$}
   }
   \put(149,38)  {$=$}
   \put(188,0)   {\Includepichtft{126c}
   \put(-5,-8.5) {\sse$\Xs $}
   \put(24,-8.5) {\sse$X$}
   \put(36,-8.5) {\sse$H$}
   \put(36,100)  {\sse$ \Hss $}
   \put(3.5,37.1){\sse$\omegi$}
   }
   \put(252,38)   {$=$}
   \put(281,0) { \Includepichtft{126d}
   \put(-5,-8.5) {\sse$\Xs $}
   \put(13,-8.5) {\sse$ X $}
   \put(44,-8.5) {\sse$ H $}
   \put(35,79.1) {\sse$ \bohr $}
   \put(28,100)  {\sse$ \Hss $}
   \put(34.4,45) {\sse\sse\begin{turn}{90}$\omegis$\end{turn}}
   \put(53.2,23.5) {\sse$\omega$}
   } }
which generalize the relations \erf{biregITl} and \erf{biregITr}, respectively,
demonstrate that the linear maps $\iHbo_X \iN \Hom(\Xs\otik X,\Hs)$ are indeed
bimodule morphisms in $\HomHH(\Fbxo(X,X),\Hbo)$.
\end{proof}

\begin{rem}\label{rem:vaco}
(i)\,~When discussing twists of \Hb\ we can restrict to the case that only the,
say, right module structure is twisted, because the bimodule ${}^{\omega\!}
H^{\omega'}$ is isomorphic to ${}^{\idsm_H}\!H^{\omega^{-1}\circ\omega'}\!$.
\nxl2
(ii)\,~It follows from the automorphism property of $\omega$ that together with
$\Lambda$ also $\omega(\Lambda)$ is a non-zero two-sided integral of $H$. As a
consequence, just like in the case $\omega\eq\id_H$ considered in Remark
\ref{rem:vac}, the counit $\eps\bico$
of \Hbo\ is uniquely determined up to a non-zero scalar.
\end{rem}

      \newpage
\appendix

\section{Coend constructions}\label{app:A}

\subsection{The \bicom\ as a coend}\label{bicom-coend}

A \emph{dinatural transformation} $F\,{\Rightarrow}\,B$ from a functor
$F\colon\, \CopC\To\D$, to an object $B\iN\D$ is a family of morphisms $\varphi
\eq \{ \varphi_X\colon F(X,X)\To B \}_{\!X\in\C}^{}$ such that the diagram
   \bee4010{
   \xymatrix @R+8pt{
   F(Y,X)\ \ar^{F(\idsm_Y,f)}[rr]\ar_{F(f,\idsm_X)}[d]&&\ F(Y,Y)\ar^{\varphi_Y^{}}[d]
   \\ F(X,X)\ \ar^{\varphi_X^{}}[rr] && \, B
   } }
commutes for all $f\iN\Hom(X,Y)$.
For instance, the family $\{d_X\}$ of evaluation morphisms of a rigid monoidal
category \C\ forms a dinatural transformation from the functor that acts as
$X\Times Y \,{\mapsto}\,X^\vee\oti Y$ to the monoidal unit $\one\iN\C$.

Dinatural transformations from a given functor $F$ to an object of $\D$
form a category, with the morphisms from $(F\,{\Rightarrow}\,B,\varphi)$ to
$(F\,{\Rightarrow}\,B',\varphi')$ being given by morphisms $f\iN\Hom_\D(B,B')$
satisfying $f \cir \varphi_X \eq \varphi'_X$ for all $X\iN\C$. A \emph{coend} 
$(A,\iota)$ for the functor $F$ is an initial object in this category.
If the coend of $F$ exists, then it is unique up to unique isomorphism; one 
denotes it by $\Coend FX $. A morphism with domain $\Coend FX$ and codomain $Y$ 
is equivalent to a family $\{f_X\}_{\!X\in\C}^{}$ of morphisms from 
$F(X,X)$ to $Y$ such that $(Y,f)$ is a dinatural transformation.

\medskip

For $H$ a \findim\ Hopf algebra over \ko, endow the categories \HMod\ and
\HBimod\ of left $H$-modules and of $H$-bimodules, respectively, with the tensor
products \erf{otimesHMod} and \erf{def-tp} and with the dualities described at
the beginning of Section \ref{sec-dual}.
Consider the tensor product (bi)functor
   \be
   \Fbx :~~ H\Mod\op \Times H\Mod \,\to\, H\Bimod
   \labl{Fbx}
that acts on objects as
   \be
   (X,\rho_X) \Times (Y,\rho_Y) ~\stackrel\Fbx\longmapsto~
   \big( X^\wee\otik Y\,,\, \rho_{X^\vee_{}}\oti\id_Y\,,\, \idXs
   \oti(\rho_Y\cir \tau_{Y,H}^{} \cir (\id_Y\oti\apo^{-1})) \big)
   \ee
and on morphisms as $f\Times g\,{\mapsto}\, f^\vee\otik g$.
Pictorially, the action on objects is
   \eqpic{Fboxmorph} {430} {42} { \put(-9,3){
   \put(0,0)   {\Includepichtft{66a}
   \put(-4,-8.8) {\sse$H $}
   \put(15.1,-8.8) {\sse$ X $}
   \put(16.2,95.5) {\sse$ X $}
   \put(22,52)   {\sse$\rho_X^{}$}
   }
   \put(48,38)   {$ \times$}
   \put(73,0) { \Includepichtft{66a}
   \put(-4,-8.8) {\sse$H $}
   \put(16.1,-8.8) {\sse$ Y $}
   \put(17.2,95.5) {\sse$ Y $}
   \put(22,52)   {\sse$\rho_Y^{}$}
   }
   \put(156,38)  {$\stackrel\Fbx\longmapsto$}
   \put(220,0)   {\Includepichtft{66b}
   \put(-4,-8.8) {\sse$H $}
   \put(13,-8.8) {\sse$X^\vee$}
   \put(14,95.5) {\sse$X^\vee$}
   \put(32,-8.8) {\sse$ Y $}
   \put(33,95.5) {\sse$ Y $}
   \put(51.5,-8.8) {\sse$ H $}
   \put(59,10.1) {\sse$ \apo^{-1} $}
   }
   \put(298,38)   {$ \equiv $}
   \put(328,0) { \Includepichtft{66c}
   \put(-4.7,9.8){\sse$ \apo $}
   \put(-3,-8.8) {\sse$ H $}
   \put(8.5,-8.8){\sse$ \Xs $}
   \put(31,46)   {\sse$ \rho_X^{} $}
   \put(41.6,95.5) {\sse$ \Xs $}
   \put(59,-8.8) {\sse$ Y $}
   \put(60,95.5) {\sse$ Y $}
   \put(63.7,65.6) {\sse$ \rho_Y^{} $}
   \put(79,-8.8) {\sse$ H $}
   \put(87,10.1) {\sse$ \apo^{-1} $}
   } } }

\begin{rem}\label{rem1}
The category $H\Mod\op \Times H\Mod$ is naturally endowed with a tensor product,
acting on objects as $(X\Times Y) \Times (X'\Times Y') \,{\mapsto}\,
(X\,{\otimes^\HMod}\,X') \Times (Y'\,{\otimes^\HMod}\,Y)$. With respect to this
tensor product and the tensor product \erf{def-tp} on \HBimod, \Fbx\
together with the associativity constraints from \Vectk\ is a monoidal functor.
\end{rem}

In this appendix we show that the coregular $H$-bimodule \Hb\ introduced in Def.\
\ref{rhorho} is the coend of the
functor \Fbx. We first present the appropriate dinatural family.

\begin{lemma}\label{lem-dinat}
The family $(\iHb_X)$ of morphisms
   \be
   \iHb_X := (d_X \oti \idHs) \cir [ \id_{X^*_{}} \oti (\rho_X^{}\cir\tau_{X,H})
   \oti \idHs] \cir ( \id_{X^*_{}} \oti \id_X \oti b_H)
   \labl{def-iHbX}
in \HBimod, pictorially given by
   \eqpic{def_iA_X_bi} {100} {35} {
   \put(-4,0)  { \Includepichtft{11a}}
   \put(2,-8.5)  {\sse$ \Xs $}
   \put(8.9,44.9){\sse$ \iHb_X $}
   \put(9.2,89)  {\sse$ \Hss $}
   \put(16,-8.5) {\sse$ X $}
   \put(49,38)   {$ = $}
   \put(77,0) { \Includepichtft{11b}
   \put(-4,-8.5) {\sse$ \Xs $}
   \put(25,-8.5) {\sse$ X $}
   \put(31,43)   {\sse$ \rho_{\!X}^{} $}
   \put(43,89)   {\sse$ \Hss $}
   } }
is dinatural for the functor \Fbx, i.e.
   \be
   \iHb_Y \circ \Fbx(\id_Y,f) = \iHb_x \circ \Fbx(f,\id_X)
   \ee
for any $f\iN\HomH(X,Y)$.
\end{lemma}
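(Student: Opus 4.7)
The plan is to establish two properties for the family \erf{def-iHbX}: first, that each $\iHb_X$ is a morphism in \HBimod\ (and not merely in \Vectk), and second, that the dinatural square commutes for every $f \iN \HomH(X,Y)$. The first point is needed for dinaturality to make sense in \HBimod, while the second is the content of the lemma proper.

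For the bimodule morphism property I would unpack the actions explicitly: on $\Fbx(X,X) \eq X^\vee \otik X$ the left $H$-action sits on the $X^\vee$ factor via the dual action, while the right $H$-action sits on the $X$ factor via $\rho_X\cir\tauHH\cdots$, more precisely via $\rho_X\cir\tau_{X,H}\cir(\id_X\oti\apo^{-1})$; on the codomain \Hb\ the actions are $\brho$ and $\bohr$ from \erf{rhorho}. Left-linearity is checked by pulling an $H$-line through the evaluation $d_X$ using the representation property of $\rho_X$ and the defining relation of the dual action, after which the occurrence of $\apo$ on both sides matches the $\apo$ present in $\brho$. Right-linearity is analogous: the $\apo^{-1}$ built into the right action on $X$ pairs with the $\apo^{-1}$ in the definition of $\bohr$, and what remains follows from the representation property of $\rho_X$ together with the Hopf duality between $b_H$ and $d_H$. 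Both verifications are short diagrammatic calculations, modeled on the standard argument that the regular representation of $H$ on itself dualizes to the coregular representation on \Hs.

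For the dinatural identity I would simply draw both sides. The left-hand side $\iHb_Y \cir \Fbx(\id_Y,f)$ places $f\colon X \To Y$ between the input wire $X$ and the action $\rho_Y$, whereas the right-hand side $\iHb_X \cir \Fbx(f,\id_X)$ places $f^\vee\colon Y^* \To X^*$ between the input wire $Y^*$ and the evaluation $d_X$. Using that $f$ is a left $H$-module morphism, i.e.\ $\rho_Y\cir(\id_H\oti f) \eq f\cir\rho_X$, I would slide $f$ past $\rho_Y$ so that it meets the evaluation; the standard dinaturality $d_Y\cir(\id_{Y^*}\oti f) \eq d_X\cir(f^\vee\oti\id_X)$ of the right duality of \Vectk\ then completes the comparison, since the remaining ingredients (the coevaluation $b_H$, the flip $\tau$, and the trailing $\idHs$) are identical on both sides.

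The main obstacle is not conceptual but bookkeeping: keeping track of where the antipode and its inverse appear in the various actions and dualities, and in particular making sure that the asymmetric $\apo^{-1}$ twist in the right action on the second tensorand of $\Fbx(X,X)$ is precisely what is needed to land in \HBimod. Once the pictures are drawn correctly, the proof reduces to two essentially routine ingredients — the $H$-linearity of $f$ and the transpose characterization of $f^\vee$ in \Vectk\ — together with a short diagrammatic check of the bimodule morphism property.
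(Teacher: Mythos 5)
Your proposal is correct and follows essentially the same route as the paper: first checking diagrammatically that each $\iHb_X$ intertwines the left and right $H$-actions (matching the antipode factors against those in $\brho$ and $\bohr$), and then proving dinaturality by combining the transpose characterization of $f^\vee$ with the $H$-linearity of $f$. These are exactly the two steps of the paper's proof, so nothing is missing.
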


\begin{proof}
(i)\,~First note that the maps \erf{def-iHbX}
are a priori just linear maps in $\Homk(X^*_{}\otik X,\Hs)$. However, when \Hs\
is endowed with the $H$-bimodule structure \erf{rhorho} and $X^*_{}\otik X$ with
the one implied by \erf{Fbx}, we have the chain of equalities
   \Eqpic{biregITl} {420} {50} { \put(-13,9){
   \put(-3,0)  {\Includepichtft{68a}
   \put(-4,-8.5) {\sse$ H $}
   \put(-1.8,38.4) {\sse$ \rho_{\!X^{\!\vee}_{}}^{} $}
   \put(16,48.5) {\sse$ \rho_{\!X}^{} $}
   \put(6,-8.5)  {\sse$\Xs $}
   \put(24,-8.5) {\sse$ X $}
   \put(37,95)   {\sse$ \Hss $}
   }
   \put(60,44)   {$=$}
   \put(82,0) { \Includepichtft{68b}
   \put(-4,-8.5) {\sse$ H $}
   \put(9,-8.5)  {\sse$ \Xs $}
   \put(16.7,44.5) {\sse$ \rho_{\!X}^{} $}
   \put(58,-8.5) {\sse$ X $}
   \put(71,101)  {\sse$ \Hss $}
   }
   \put(184,44)  {$=$}
   \put(208,0)   {\Includepichtft{68c}
   \put(-4,-8.5) {\sse$ H $}
   \put(8,-8.5)  {\sse$ \Xs $}
   \put(26,-8.5) {\sse$ X $}
   \put(37,101)  {\sse$ \Hss $}
   }
   \put(273,44)   {$=$}
   \put(292,0) { \Includepichtft{68d}
   \put(-4,-8.5) {\sse$ H $}
   \put(10.6,-8.5) {\sse$ \Xs $}
   \put(57,-8.5) {\sse$ X $}
   \put(70,109)  {\sse$ \Hss $}
   }
   \put(393,44)   {$=$}
   \put(419,0) { \Includepichtft{68f}
   \put(-4,-8)   {\sse$H $}
   \put(8,-8.5)  {\sse$\Xs$}
   \put(16.3,41.5) {\sse$ \rho_{\!X}^{} $}
   \put(24,-8.5) {\sse$X$}
   \put(37,109)  {\sse$ \Hss $}
   \put(42.5,86.7) {\sse$ \brho $}
   } } }
showing that \erf{def_iA_X_bi} intertwines the left action of $H$, and
   \eqpic{biregITr} {320} {43} {
   \put(0,0)   {\Includepichtft{69a}
   \put(-5,-8.5) {\sse$\Xs $}
   \put(13,-8.5) {\sse$X$}
   \put(25,-8.5) {\sse$H$}
   \put(25,100)  {\sse$ \Hss $}
   }
   \put(52,38)   {$=$}
   \put(85,0) { \Includepichtft{69b}
   \put(-5,-8.5) {\sse$\Xs $}
   \put(13,-8.5) {\sse$X$}
   \put(25,-8.5) {\sse$H$}
   \put(25,100)  {\sse$ \Hss $}
   }
   \put(146,38)  {$=$}
   \put(184,0)   {\Includepichtft{69c}
   \put(-5,-8.5) {\sse$\Xs $}
   \put(20,-8.5) {\sse$X$}
   \put(32,-8.5) {\sse$H$}
   \put(32.5,100)  {\sse$ \Hss $}
   }
   \put(246,38)   {$=$}
   \put(275,0) { \Includepichtft{69d}
   \put(-5,-8.5) {\sse$\Xs $}
   \put(13,-8.5) {\sse$ X $}
   \put(34,-8.5) {\sse$ H $}
   \put(33,79.1) {\sse$ \bohr $}
   \put(25,100)  {\sse$ \Hss $}
   } }
showing that it also intertwines the right action.
\nxl1
(ii) The dinaturalness property amounts to the equality of the left and right
hand sides of
   \eqpic{dinat_iA_X} {250} {44} {
   \put(0,0)   {\Includepichtft{13a}}
   \put(-7.5,50){\sse$ \Xs $}
   \put(0,-8.5) {\sse$ Y^* $}
   \put(1.2,30.4) {\sse$ f^\wee $}
   \put(30,-8.5){\sse$ X $}
   \put(48,106) {\sse$ \Hs $}
   \put(75,47)  {$ = $}
   \put(104,0) { {\Includepichtft{13b}}
   \put(-5,-8.5){\sse$ Y^* $}
   \put(25,-8.5){\sse$ X $}
   \put(27.3,60.8){\sse$ f $}
   \put(43,106) {\sse$ \Hs $}
   }
   \put(176,47) {$ = $}
   \put(204,0) { {\Includepichtft{13c}}
   \put(-5,-8.5){\sse$ Y^* $}
   \put(25,-8.5){\sse$ X $}
   \put(27.3,17.5){\sse$ f $}
   \put(31.3,49){\sse$ Y $}
   \put(43,106) {\sse$ \Hs $}
   } }
for any module morphism $f$ from $X$ to $Y$. Now the first equality in
\erf{dinat_iA_X} holds by definition of $f^\wee$, and the second equality holds
because $f$ is a module morphism.
\end{proof}

\begin{prop}\label{prop-F=coend}
The $H$-bimodule \Hb\ together with the dinatural family $(\iHb_X)$
given by \erf{def-iHbX} is the coend of the functor \Fbx,
   \be
   (\Hb,\iHb) = \coen X \Fbx(X,X) \,.
   \labl{F=coend}
\end{prop}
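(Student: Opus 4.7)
The plan is to verify the defining universal property of the coend directly. Dinaturality of the family $(\iHb_X)$ itself is Lemma \ref{lem-dinat}; what remains is to show that for every $B\iN\HBimod$ and every dinatural family $\varphi\eq\{\varphi_X\colon\Fbx(X,X)\To B\}_{X\iN\HMod}$ there exists a unique bimodule morphism $\hat\varphi\colon\Hb\To B$ with $\hat\varphi\cir\iHb_X\eq\varphi_X$ for all $X\iN\HMod$.

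The natural candidate is
\[
\hat\varphi\;:=\;\varphi_H\cir(\idHs\otik\eta),
\]
obtained by evaluating the dinatural family at the regular left $H$-module $(H,m)$ and restricting through the unit. A short graphical check using $m\cir(\id_H\otik\eta)\eq\id_H$ together with the duality triangle yields the key identity $\iHb_H\cir(\idHs\otik\eta)\eq\idHs$; precomposing any purported factorization $\hat\varphi'\cir\iHb_H\eq\varphi_H$ with $\idHs\otik\eta$ then forces $\hat\varphi'\eq\hat\varphi$, establishing uniqueness. To verify the factorization at a general $X\iN\HMod$, I would apply dinaturality of $\varphi$ along the left-module morphisms $f_x\eq\rho_X\cir(\id_H\otik x)\colon(H,m)\To X$ indexed by elements $x\iN X$ (element-free, along the universal evaluation $\rho_X\colon(H,m)\otik X\To X$ and its naturality in $X$). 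A direct unpacking of \erf{def-iHbX} shows that $\iHb_X(p\otik x)\eq f_x^\wee(p)$ as elements of $\Hs$, so the dinaturality square for $f_x$ identifies $\varphi_X(p\otik x)$ with $\varphi_H(f_x^\wee(p)\otik 1)\eq\hat\varphi(\iHb_X(p\otik x))$.

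The main obstacle is showing that $\hat\varphi$ is a morphism of $H$-bimodules rather than just a \ko-linear map. Intertwining the left action is free of charge: according to \erf{Fboxmorph} the left action on $\Fbx(H,H)$ is concentrated on the first tensor factor, which matches $\brho$ on $\Hs$, and $\varphi_H$ is a bimodule morphism by assumption. Intertwining the right action is more delicate: by \erf{Fboxmorph} the element $q\otik 1\iN\Fbx(H,H)$ is sent under the right action of $h$ simply to $q\otik\apoi(h)$, whereas $\bohr(q\otik h)$ is in general a nontrivial convolution in $\Hs$. The reconciliation is to apply dinaturality a \emph{second} time, now along the right-translation maps $R_h\eq m\cir(\id_H\otik h)\colon(H,m)\To(H,m)$, which are left-module morphisms by the associativity of $m$. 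A short computation using \erf{rhorho} gives $\bohr(q\otik h)\eq R_{\apoi(h)}^\wee(q)$, and dinaturality of $\varphi$ along $R_{\apoi(h)}$ evaluated on $q\otik 1$ then identifies $\varphi_H(q\otik\apoi(h))$ with $\varphi_H(\bohr(q\otik h)\otik 1)$, which is precisely the right intertwining property for $\hat\varphi$. The appearance of $\apoi$ in the right action on $\Fbx$ was engineered for exactly this matching.
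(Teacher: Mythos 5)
Your proposal is correct and follows essentially the same route as the paper's proof: the same candidate morphism $\varphi_H\cir(\idHs\oti\eta)$, uniqueness via the right inverse $\iHb_H\cir(\idHs\oti\eta)\eq\idHs$, the factorization identity from dinaturality along $f_x\eq\rho_X\cir(\id_H\oti x)$, and the right-module intertwining from dinaturality along right translation $m\cir(\id_H\oti(\apoi\cir h))\iN\EndH(H)$, which is exactly the paper's argument in \erf{kappazbim}. No gaps.
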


\begin{proof}
We have to show that $(\Hb,\iHb)$ is an initial object in the category of
dinatural transformations from \Fbx\ to a constant.
\nxl1
(i)~\,Let $j^Z$ be a dinatural transformation from \Fbx\ to $Z\iN H\Bimod$.
Given any $X\iN H\Mod$ and any $x_\circ\iN \Homk(\ko,X)$ (i.e.\ element of
$X$), applying the dinaturalness property of $j^Z$ to the morphism
$f_{x_\circ} \,{:=}\, \rho_X \cir (\id_H \oti x_\circ) \iN \HomH(H,X)$
(with $H$ regarded as an $H$-module via the regular left action) yields
$j^Z_X \cir (\idXs \oti x_\circ) \eq j^Z_H  \cir (\iHb_X \oti \eta)
\cir (\idXs \oti x_\circ)$. Namely, we have
   \eqpic{rZ_etc_2B} {300} {38} { \setlength\unitlength{.8pt}
   \put(0,0)   { \includepichtft{17e}
   \put(0.2,-8.5) {\sse$ \Xs $}
   \put(8.8,81.1) {\sse$ j^Z_{\!X^{}} $}
   \put(10.6,115) {\sse$ Z $}
   \put(16,-8.5)  {\sse$ H $}
   \put(45.3,18)  {\sse$ x_\circ $}
   }
   \put(77,51)   {$ \equiv $}
   \put(108,0) { \includepichtft{17d}
   \put(0.2,-8.5) {\sse$ \Xs $}
   \put(9.2,64.1) {\sse$ j^Z_{\!X^{}} $}
   \put(10.6,115) {\sse$ Z $}
   \put(16,-8.5)  {\sse$ H $}
   \put(25.4,30)  {\sse$ f_{x_\circ} $}
   }
   \put(174,51)  {$ = $}
   \put(207,0) { \includepichtft{17f}
   \put(-12.6,32.3) {\sse$ f_{x_\circ}^\wee $}
   \put(0.2,-8.5) {\sse$ \Xs $}
   \put(8.8,67.1) {\sse$ j^Z_{\!H^{}} $}
   \put(10.6,117) {\sse$ Z $}
   \put(16,-8.5)  {\sse$ H $}
   }
   \put(270,51)  {$ \equiv $}
   \put(303,0) { \includepichtft{17g}
   \put(-5,-8.5)  {\sse$ \Xs $}
   \put(20.7,20.9){\sse$ x_\circ $}
   \put(41.5,83.4){\sse$ j^Z_{\!H^{}} $}
   \put(43.2,117) {\sse$ Z $}
   \put(49,-8.5)  {\sse$ H $}
   } }
and thus, after composition with $\idXs\oti\eta$,
   \eqpic{rZ_etc_3B} {420} {38} { \setlength\unitlength{.8pt}
   \put(-18,51)   {$ j^Z_X\circ(\idXs\oti x_0)~ = $}
   \put(116,0) { \includepichtft{17h}
   \put(2,-8.5)   {\sse$ \Xs $}
   \put(8.8,81.1) {\sse$ j^Z_{\!X^{}} $}
   \put(10.6,115) {\sse$ Z $}
   }
   \put(185,51)  {$ = $}
   \put(216,0) { \includepichtft{17i}
   \put(-4,-8.5)  {\sse$ \Xs $}
   \put(41.5,83.7){\sse$ j^Z_{\!H^{}} $}
   \put(43.6,117) {\sse$ Z $}
   }
   \put(306,51)  {$ =~ j^Z_H  \cir (\iHb_X \oti \eta) \cir (\id_{X^*_{}} \oti x_\circ) $}
   }
with $\iHb_X$ from \erf{def-iHbX}.
Since $x_\circ\iN \Homk(\ko,X)$ is arbitrary, we actually have
   \be
   j^Z_X = j^Z_H \cir (\iHb_X \oti \eta)
   \labl{jZ_iAB}
for any bimodule $Z$ and dinatural transformation $j^Z$ from \Fbx\ to $Z$.
\nxl1
(ii)~\,Now consider the linear map
   \be
   \kappa^Z := j^Z_H \circ (\idHs \oti \eta)
   \ee
from \Hs\ to $Z$. This is in fact a bimodule morphism from \Hb\ to $Z$:
Compatibility with the left $H$-action follows directly from the fact that
$j_H^Z$ is a morphism of bimodules, and thus in particular of left modules,
while compatibility with the right $H$-action is seen as follows:
   \eqpic{kappazbim} {430} {37} {
   \put(3,0)   {\Includepichtft{70a}
   \put(-9.3,30) {\sse$\bohr$}
   \put(-2.3,-8.5) {\sse$\Hs $}
   \put(3.4,49)  {\sse$\jHZx$}
   \put(5,92.5)  {\sse$ Z $}
   \put(17,6)    {\sse$h$}
   }
   \put(52,38)   {$=$}
   \put(79,0) { \Includepichtft{70b}
   \put(-5,-8.5) {\sse$ \Hs $}
   \put(28.7,92.5) {\sse$ Z $}
   \put(33.5,11) {\sse$h$}
   \put(27.4,62.8) {\sse$\jHZx$}
   }
   \put(146,38)  {$=$}
   \put(187,0) {\Includepichtft{70d}
   \put(-2,-8.5) {\sse$\Hs $}
   \put(5.4,92.5){\sse$ Z $}
   \put(24.5,12) {\sse$h$}
   \put(3.7,51.2){\sse$\jHZx$}
   }
   \put(238,38)  {$=$}
   \put(280,0) {\Includepichtft{70e}
   \put(-2,-8.5) {\sse$\Hs $}
   \put(5,92.5)  {\sse$ Z $}
   \put(24.5,9)  {\sse$h$}
   \put(3.7,51.2){\sse$\jHZx$}
   }
   \put(334,38)  {$=$}
   \put(378,0) {\Includepichtft{70f}
   \put(-2,-8)   {\sse$\Hs $}
   \put(5.7,92.5){\sse$ Z $}
   \put(13,66)   {\sse$ \ohr_Z $}
   \put(24.5,9)  {\sse$h$}
   \put(-11,38.2){\sse$j_H^Z$}
   } }
Here the element $h\iN \Homk(\ko,H)$ is arbitrary; the second equality invokes the
dinaturalness of $j_{}^Z$ for the map $m\cir(\id_H \oti (\apoi\cir h)) \iN \EndH(H)$.
\nxl1
(iii)\,~In terms of the morphism $\kappa^Z$, \erf{jZ_iAB} amounts to
   \be
   j^Z_X = \kappa^Z \circ \iHb_X \,.
   \labl{jZ_rZ_iAB}
This establishes existence of the morphism from \Hb\ to $Z$ that is required for the
universal property of the coend.
\nxl1
(iv)\,~It remains to show that $\kappa^Z$ is uniquely determined.
This just follows by specializing \erf{jZ_rZ_iAB} to the case $X\eq H$ and
observing that $\iHb_H$ has a right-inverse. The latter property holds because of
$\iHb_H \cir (\idHs\oti\eta) \eq (d_H\oti\idHs)\cir (\idHs\oti b_H) \eq \idHs$.
\end{proof}


\subsection{Some equivalences of braided monoidal categories}\label{ssec:equiv}

We note the following equivalences, where as usual $H\op$ is $H$ with
opposite product $m\cir\tauHH$ (and with the same coproduct), and $H\coop$
is $H$ with opposite coproduct $\tauHH\cir\Delta$ (and with the same product).

\begin{lemma}\label{lem-equiv}
(i)\,~For any Hopf algebra $H$ there are equivalences
   \be
   \mbox{\rm $ \HBimod \;\simeq\, (H\Otik H)\Mod \;\simeq\, (H\Otik H\op)\Mod $}
   \labl{2equiv}
as abelian categories.
\nxl1
(ii)\,~The equivalences \erf{2equiv} extend to equivalences
   \be
   \mbox{\rm $ \HBimod \;\simeq\, (H\Otik H\coop)\Mod \;\simeq\, (H\Otik H\op)\Mod $}
   \labl{2equiv2}
as monoidal categories, with respect to the tensor
products \erf{otimesHMod} on \HMod\ and \erf{def-tp} on \HBimod.
The constraint morphisms for the tensor functor structures of the equivalence
functors are all identities.
\nxl1
(iii)\,~If the Hopf algebra $H$ is quasitriangular with R-matrix $R$, then
the equivalences \erf{2equiv2} extend to equivalences
   \be
   \mbox{\rm $ \HBimod \;\simeq\, (\overline H\Otik H\coop)\Mod
   \;\simeq\, (\overline H\Otik H\op)\Mod $}
   \labl{opequiv}
as braided monoidal categories, where \HBimod\ is endowed with the braiding
\erf{bibraid} and $\overline H$ is $H$ with R-matrix $R_{21}^{-1}$.
(Also, $H\op$ is endowed with the natural quasitriangular structure inherited from
$H$, i.e.\ has R-matrix $R_{21}$.)
\end{lemma}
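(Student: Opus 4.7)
My overall strategy is to exhibit the equivalences as explicit functors that are the identity on underlying vector spaces and then verify, level by level, that they preserve the relevant structure --- abelian in (i), monoidal in (ii), braided in (iii). In each case no nontrivial coherence data should be needed: the equivalences will turn out to be strict on the nose, which is exactly what the lemma asserts by saying that all constraint morphisms are identities.

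For (i), I would define the functor $\HBimod \to (H\otik H\op)\Mod$ that sends $(X,\rho,\ohr)$ to $X$ with action $(h\otik k)\cdot x := \rho(h,\ohr(x,k))$. This is a well-defined, associative left action of $H\otik H\op$ precisely because the left and right $H$-actions commute and because the product of $H\op$ is $m\circ\tauHH$. The obvious quasi-inverse decomposes any $(H\otik H\op)$-action into the restrictions to $H\otik 1$ and $1\otik H\op$. The equivalence with $(H\otik H)\Mod$ is then obtained by pulling back along the algebra isomorphism $\id_H\otik \apo\colon H\otik H \To H\otik H\op$, using that $\apo$ is an anti-algebra morphism from $H$ to itself and is invertible (because $H$ is finite-dimensional). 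Abelian exactness is preserved automatically since all functors act as the identity on underlying vector spaces.

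For (ii), I would check that under the first functor above the tensor product \erf{def-tp} on \HBimod\ matches, on the nose, the tensor product on $(H\otik H\op)\Mod$ determined by $\Delta_{H\otimes H\op} = \Delta\otik\Delta$ (with $H\op$ keeping the coproduct of $H$). A direct unfolding of $(h\otik k)\cdot(x\oti y)$ in either category gives $\rho_X(h_{(1)},\ohr_X(x,k_{(1)}))\oti \rho_Y(h_{(2)},\ohr_Y(y,k_{(2)}))$, so the tensor product functors agree literally and no nontrivial coherence isomorphism is required. For the $(H\otik H\coop)\Mod$ variant I would instead convert the right action via $\rho'(h,x) := \ohr(x,\apoi(h))$; the antipode identity $\Delta\cir\apoi = \tauHH\cir(\apoi\otik\apoi)\cir\Delta$ then forces the coproduct on the second factor to be $\Delta_{H\coop} = \tauHH\cir\Delta$, after which the same unfolding again matches \erf{def-tp} on the nose.

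For (iii) the remaining task is to verify that the braiding \erf{bibraid} agrees with the canonical braiding of the tensor product quasitriangular Hopf algebra. In any $(K_1\otik K_2)\Mod$ with both $K_i$ quasitriangular, the R-matrix splits as an external product and the induced braiding factorizes into independent $K_1$- and $K_2$-pieces. The braiding \erf{bibraid} is manifestly of this shape: it uses $R^{-1}$ on the left $H$-factor (matching $\overline H = (H,R_{21}^{-1})$) and $R$ on the right $H$-factor (matching $(H\op,R_{21})$ or $(H\coop,R_{21})$ after the flip implicit in the right-to-left conversion of (ii)). The main obstacle is purely combinatorial bookkeeping: pinning down which factor is opposite in algebra versus coalgebra, which side of the flip is used in converting right to left actions, and where $\apo$ or $\apoi$ must be inserted, so that all powers of $R$ versus $R_{21}$ and of $R$ versus $R^{-1}$ come out exactly as in \erf{bibraid}. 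Once these conventions are locked in, the braided monoidal equivalence follows by side-by-side comparison of the two explicit braiding formulas.
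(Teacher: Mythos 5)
Your proposal is correct and follows essentially the same route as the paper: functors that are the identity on underlying vector spaces, converting the right action to a left one either via the (inverse) antipode (which flips the coproduct and so yields $H\coop$ on the second factor for the monoidal structure) or without any antipode (yielding $H\op$), followed by a direct on-the-nose comparison of the tensor products and of the componentwise braiding coming from the external-product R-matrix. The only differences are cosmetic: the paper states (i) for an arbitrary pair of Hopf algebras $H,H'$, and in (iii) it fixes the quasitriangular structure of $\overline H\Otik H\coop$ explicitly (R-matrix $(\id_H\oti\tau_{H,H}\oti\id_H)\cir(R_{21}^{-1}\oti R^{-1})$, and $R_{21}$ for $H\op$) before the same "direct calculation" that you defer to convention bookkeeping.
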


\begin{proof}
(i)\,~We derive each of the equivalences in a somewhat more general context.
\nxl1
For any two Hopf algebras $H$ and $H'$ there is an equivalence
$H\mbox-H'\Bimod \,{\simeq}\, (H\Otik H')\Mod$ as abelian categories. The
equivalence is furnished by the two functors which on morphisms are the identity
and which map objects according to
   \eqpic{HHbimiso} {390} {36} {
   \put(0,0)   {\Includepichtft{65a}
   \put(-4,-8.5) {\sse$ H $}
   \put(6,-8)    {\sse$ H'$}
   \put(24.1,-8.5) {\sse$ M $}
   \put(24.8,86) {\sse$ M $}
   \put(-5.5,47) {\sse$ \rho^{H\Oti H'} $}
   }
   \put(58,34)   {$ \mapsto$}
   \put(100,0) { \Includepichtft{65b}
   \put(-4,-8.5) {\sse$ H $}
   \put(12.3,-8.5) {\sse$ M $}
   \put(12.9,86) {\sse$ M $}
   \put(29,-8.5) {\sse$ H' $}
   \put(36,25)   {\sse$ \apo'^{-1} $}
   \put(-17.2,26.4){\sse$ \rho^{H\Oti H'} $}
   \put(-17.2,66.4){\sse$ \rho^{H\Oti H'} $}
   }
   \put(185,34)  {and}
   \put(240,0) {\Includepichtft{65c}
   \put(-4,-8.5) {\sse$H $}
   \put(5.2,55.4){\sse$ \rho^H $}
   \put(13,-8.5) {\sse$ M $}
   \put(13.8,86) {\sse$ M $}
   \put(22.2,43.4){\sse$ \ohr^{\!H'} $}
   \put(32,-8.5) {\sse$ H' $}
   }
   \put(300,34)   {$ \mapsto$}
   \put(332,0) { \Includepichtft{65d}
   \put(-4,-8.5) {\sse$ H $}
   \put(7,-8.5)  {\sse$ H'$}
   \put(22,-8.5) {\sse$ M $}
   \put(22.8,86) {\sse$ M $}
   \put(36,29.5) {\sse$ \apo' $}
   } }
respectively.
\nxl1
Similarly, an equivalence
$H\mbox-H'\Bimod \,{\simeq}\, (H\Otik H'{}^\text{op})\Mod$ as abelian categories
is furnished by functors that differ from those in \erf{HHbimiso} by just
omitting the (inverse) antipode (compare e.g.\ \cite[Prop.\,4.6]{fuRs4}).
\nxl2
(ii)\,~For the first equivalence, compatibility with the tensor product follows
for the second functor in \erf{HHbimiso} as
   \eqpic{HHbimisotens} {400} {38} {
   \put(0,0) {\Includepichtft{94a}
   \put(0,-8.5)  {\sse$ H $}
   \put(14.6,-8.5) {\sse$ M $}
   \put(15.5,95) {\sse$ M $}
   \put(31.7,-8.5) {\sse$ N $}
   \put(32.7,95) {\sse$ N $}
   \put(46,-8.5) {\sse$ H' $}
   }
   \put(85,38)   {$\mapsto$}
   \put(130,0) { \Includepichtft{94d}
   \put(0,-8.5)  {\sse$ H $}
   \put(11,-8.5) {\sse$ H' $}
   \put(24,-8.5) {\sse$ M $}
   \put(24.8,95) {\sse$ M $}
   \put(41,-8.5) {\sse$ N $}
   \put(41.8,95) {\sse$ N $}
   }
   \put(204,38)  {$=$}
   \put(232,0) {\Includepichtft{94b}
   \put(0,-8.5)  {\sse$ H $}
   \put(11,-8.5) {\sse$ H' $}
   \put(24,-8.5) {\sse$ M $}
   \put(24.8,95) {\sse$ M $}
   \put(41,-8.5) {\sse$ N $}
   \put(41.8,95) {\sse$ N $}
   }
   \put(302,38)  {$ =$}
   \put(328,0) { \Includepichtft{94c}
   \put(0,-8.5)  {\sse$ H $}
   \put(15,-8.5) {\sse$ H' $}
   \put(29,-8.5) {\sse$ M $}
   \put(29.8,95) {\sse$ M $}
   \put(46,-8.5) {\sse$ N $}
   \put(46.8,95) {\sse$ N $}
   } }
and analogously for the first functor, as well as for the second equivalence.
\nxl2
(iii)\,~The Hopf algebras $\overline H \otik H\coop$ and $\overline H \otik H\op$
have natural quasitriangular structures, with R-matrices given by
$(\id_H\oti c_{H,H} \oti \id_H) \cir (R_{21}^{-1} \oti R^{-1}_{})$.
By direct calculation one checks that the two functors given in
\erf{HHbimiso} (with $H' \eq H$), respectively the ones with the occurences of the
antipode removed, not only furnish an equivalence between \HBimod\ and
$(\overline H\Otik H\coop)\Mod$, respectively $(\overline H\Otik H\op)\Mod$, as abelian
monoidal categories, but map the braidings of these categories to each other as well.
\\
Also note that the R-matrix furnishes an equivalence between $H\coop\Mod$ and
\HMod\ as monoidal categories, so that in the equivalences \erf{opequiv} we
could as well use $H$ instead of $H\coop$.
\end{proof}

\begin{rem}
In view of Lemma \ref{lem-equiv}, Prop.\ \ref{prop-F=coend} is implied by
Theorem 7.4.13 of \cite{KEly}.
\end{rem}

The significance of the coend \erf{F=coend} and of the equivalences in Lemma
\ref{lem-equiv} actually transcends the framework of the (bi)module categories
considered in this paper. Namely, one can consider the situation that \HMod\ is
replaced by a more general ribbon
category \C, while the role of \HBimod\  is taken over by the Deligne product
of \C\ with itself. Recall \cite[Sect.\,5]{deli} that the Deligne tensor product
of two \ko-linear abelian categories \C\ and \D\ that are locally finite, i.e.\
all morphism spaces of which are \findim\ and all objects of which have finite
length, is a category $\C\boti\D$ together with a bifunctor
$\boxtimes\colon \C\Times\D \To \C\boti\D$ that is right exact and \ko-linear in
both variables and has the following universal property: for any bifunctor $G$
from $\C\Times\D$ to a \ko-linear abelian category $\mathcal E$ being right exact
and \ko-linear in both variables there exists a unique right exact \ko-linear
functor $G_{\!\Box}\colon \C\boti\D \To \mathcal E$ such that $G \cong G_{\!\Box}
\cir \boxtimes$. In short, bifunctors from $\C\Times\D$ become functors from
$\C\boti\D$. The category $\C\boti\D$ is again \ko-linear abelian and locally finite.

By the universal property of the Deligne product, there is a unique functor
   \be
   \Fbb : \quad H\Mod \boti H\Mod \,\longrightarrow\, H\Bimod
   \labl{equivFbb}
such that the bifunctor \erf{Fbx} can be written as the composition
$\Fbx \eq \Fbb \circ (?^\vee \boti \Id)$, with the functor
$?^\vee \boti \Id \eq \boxtimes \cir (?^\vee \Times \Id)$ acting as
$X\Times Y \,{\mapsto}\, X^\vee\boti Y$ and $f\Times g \,{\mapsto}\, f^\vee\boti g$.
On objects of $\C\boti\D$ that are of the form $U\boti V$ with $U\iN\C$ and
$V\iN\D$, the functor \Fbb\ acts as
   \be
   (X,\rho_X) \boti (Y,\rho_Y) ~\stackrel\Fbb\longmapsto~
   \big( X\oti Y\,,\, \rho_X\oti\id_Y\,,\, \id_X\oti(\rho_Y\cir \tau_{Y,H}
   \cir (\id_Y\Oti\apo^{-1})) \big) \,.
   \ee

Now by combining Prop.\ 5.3 of \cite{deli} with the first equivalence in
\erf{2equiv}, one sees (compare also e.g.\ \cite[Ex.\,7.10]{franI}) that the
functor \Fbb\ is an equivalence of abelian categories. Further,
$H\Mod \boti H\Mod$ has a natural monoidal structure \cite[Prop.\,5.17]{deli}
as well as a braiding (which on objects of the form $U\boti V$ acts as
${(c^{H\text{-Mod}}_{U',U})}^{-1} {\otimes_\ko}\,c^{H\text{-Mod}}_{V,V'}$).
With respect to these the equivalence \erf{equivFbb} can be endowed with the
structure of an equivalence of braided monoidal categories.

Observations analogous to those made here for the category $\C_H \eq \HMod$
in fact apply to any locally finite \ko-linear abelian ribbon category \C.
Hereby the Frobenius algebra \Hb\ in \HBimod\ can be understood as a particular
case of the coend
   \be
   F_\C := \coen X X^\vee \boti X
   \ee
of the functor $?^\vee\boti\Id \colon \CopC\to\CbC$ (where $\overline{\mathcal C}$
is \C\ with opposite braiding), which exists for any such category \C.
This coend $F_\C$ has already been considered in \cite{kerl5}
and \cite[Sect\,5.1.3]{KEly}.
It is natural to expect that also in this general setting the coend $F_\C$
still carries a natural Frobenius algebra structure. However, so far we only
know that $F_\C$ is naturally a unital associative algebra in \CbC.

For $\C \eq \HMod$, the category \CbC\ is also equivalent, as a ribbon category,
to the center of \C, and thus to the category of
Yetter-Drinfeld modules over $H$. Hence instead of with $H$-bimodules we could
equivalently work with Yetter-Drinfeld modules over $H$.
In particular, the Frobenius algebra \Hb\ can be recognized as the so-called
\cite{ffrs,davy20} full center of the tensor unit of \HMod; in the
Yetter-Drinfeld setting, this is described in Example 5.5 of \cite{davy22}.


\subsection{The coend \Haalg\ in \HBimod}\label{bicoad-coend}

The coadjoint left and right actions $\rho\coa \iN \Hom(H\oti\Hs,\Hs)$ and
$\ohr\coar \iN \Hom(\Hs\oti H,\Hs)$ of $H$ on its dual $\Hs$ are by
definition the morphisms
   \eqpic{def_lads} {370} {49} {
   \put(0,52)      {$ \rho\coa ~= $}
   \put(42,0) { \includepichopf{48}
   \put(6.5,-8.5){\sse$ H $}
   \put(23.4,-8.5) {\sse$ \Hss $}
   \put(32.3,65.6) {\sse$ \apo $}
   \put(64.5,114)  {\sse$ \Hss $}
   }
   \put(167,52)  {and}
   \put(228,52)    {$ \ohr\coar ~= $}
   \put(278,0) { \includepichtft{48c}
   \put(-4,-8.5)   {\sse$ \Hss $}
   \put(8.8,60.5)  {\sse$ \apo $}
   \put(37,114)    {\sse$ \Hss $}
   \put(53.4,-8.5) {\sse$ H $}
   \put(61.4,10.5) {\sse$ \apoi $}
   } }
We call the $H$-bimodule that consists of the vector space $\Hs \otik \Hs$,
endowed with the coadjoint left $H$-action on the first tensor factor and with
the coadjoint right $H$-action on the second factor, the \emph{\bicoad} and
denote it by \Haa. That is,
   \be
   \Haa = (\Hs \Otik \Hs,\rho\coa\oti\idHs,\idHs\oti\rho\coar) \,.
   \labl{bicoad}

We will now show that this bimodule arises as the coend of the functor
   \be
   \otimes \cir(?^\vee{\times}\,\Id) :\quad \HBimod\op \Times \HBimod \to \HBimod \,,
   \labl{F-lyub}
where $\otimes$ and $?^\vee$ are the tensor product \erf{def-tp} and right duality
\erf{def-duals} of \HBimod. A crucial input is the braided monoidal equivalence
described in Lemma \ref{lem-equiv}(iii).

\begin{prop}\label{bilyub-coend}
Let $H$ be a \findim\ ribbon Hopf \ko-algebra. Then the
$H$-bimodule \Haa\ together with the family $\iHaa$ of morphisms
   \eqpic{def_iHaa_X} {120} {43} {
   \put(0,0)   {\Includepichtft{11a}}
   \put(-1.6,89)  {\sse$ \Hss\Oti\Hss $}
   \put(2,-8.5)   {\sse$ X^{\!\vee} $}
   \put(9.2,45.2) {\sse$ \iHaa_X $}
   \put(16,-8.5)  {\sse$ X $}
   \put(48,44)    {$ := $}
   \put(80,0) { \Includepichtft{11c}
   \put(-4.1,-8.5){\sse$ \Xs $}
   \put(19.7,28)  {\sse$ \ohr_{\!X}^{} $}
   \put(25.3,-8.5){\sse$ X $}
   \put(31.2,62)  {\sse$ \rho_{\!X}^{} $}
   \put(42.6,104) {\sse$ \Hss $}
   \put(59.4,104) {\sse$ \Hss $}
   } }
from $X^\vee{\otimes}\,X$ to \Haa, for $X\eq(X,\rho_X,\ohr_X)\iN \HBimod$, is the
coend for the functor \erf{F-lyub}:
   \be
   (\Haa,\iHaa) = \coend X \,.
   \ee
\end{prop}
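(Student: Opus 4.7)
The plan is to proceed along the same three-step pattern as the proof of Proposition \ref{prop-F=coend}: verify that $\iHaa_X$ is a bimodule morphism, check dinaturalness of the family $\iHaa$, and then establish the universal property.

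First I would verify that $\iHaa_X \in \HomHH(X^\vee \otimes X, \Haa)$. The left $H$-action on $X^\vee \otimes X$ is the one prescribed by \erf{Fboxmorph} (with $H' \eq H$), while the left action on \Haa\ is the coadjoint action $\rho\coa$ on the first $\Hs$-factor and trivial on the second. The intertwining property is checked by a diagrammatic chain of equalities closely parallel to \erf{biregITl}, the main inputs being the anti-algebra morphism property of $\apo$, the associativity of the action on $X$, and the defining properties of evaluation and coevaluation; compatibility with the right action is the mirror-image computation. Dinaturalness is then established exactly as in \erf{dinat_iA_X}: for $f \in \HomHH(X,Y)$, the identity $\iHaa_Y \circ (\id_{Y^\vee} \oti f) \eq \iHaa_X \circ (f^\vee \oti \id_X)$ follows from the definition of $f^\vee$ and the bimodule morphism property of $f$, with no role played by the actions on \Haa\ itself.

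For the universal property I would follow parts (i)--(iv) of the proof of Proposition \ref{prop-F=coend}. Given a dinatural transformation $j^Z$ from $(X,Y)\mapsto X^\vee\oti Y$ to an $H$-bimodule $Z$, specialize to $X \eq H$ regarded as the regular $H$-bimodule and apply dinaturalness to maps $f_{x_\circ}$ built from elements $x_\circ \in X$ via the regular left and right actions of $H$ on itself, to express $j^Z_X$ as $\kappa^Z \circ \iHaa_X$ for a suitable morphism $\kappa^Z\colon \Hs \otik \Hs \To Z$. Verifying that $\kappa^Z$ is a bimodule morphism out of \Haa\ proceeds as in \erf{kappazbim}, applied separately to the left and right factors; uniqueness follows because $\iHaa_H$ admits a right inverse constructed from $\eta$ and the (co)evaluation maps.

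The main potential obstacle is the bookkeeping for the two-sided bimodule structure and the antipode occurrences in the coadjoint actions. A conceptually cleaner route, which I would use as a cross-check, is to invoke the braided monoidal equivalence of Lemma \ref{lem-equiv}(iii) to reduce the assertion to the well-known result that for a finite-dimensional Hopf algebra $K$ the coend of the functor $(X,Y)\mapsto X^\vee\oti Y$ on $K$-Mod is $K^*$ equipped with the coadjoint $K$-action (Theorem 7.4.13 of \cite{KEly}), applied to $K \eq H \Otik H\op$; one then only needs to identify the image of $\iHaa$ under the equivalence with the canonical dinatural family on the $K$-Mod side and to check that the coadjoint action of $H \otik H\op$ on $(H\otik H\op)^*$ translates into the bicoadjoint structure \erf{bicoad}.
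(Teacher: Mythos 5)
Your first two steps (bimodule morphism property of $\iHaa_X$ and dinaturalness) are fine and indeed parallel Lemma \ref{lem-dinat}. The genuine gap is in the universal-property step: specializing to $X\eq H$ with the \emph{regular bimodule} structure does not work in the bimodule setting. A bimodule morphism out of the regular bimodule $H$ is determined by the image of $1$, which must satisfy $h\cdot x_\circ \eq x_\circ\cdot h$ for all $h$; so for a general element $x_\circ\iN X$ your map $f_{x_\circ}$ is \emph{not} a morphism in \HBimod, and dinaturalness of $j^Z$ cannot be applied to it. Moreover $j^Z_H\cir(\idHs\oti\eta)$ has domain $\Hs$, not $\Hs\otik\Hs$, so it cannot serve as the required $\kappa^Z$, and $\iHaa_H$ for the regular bimodule is in general not even an epimorphism (for commutative $H$, e.g.\ a group algebra of a finite abelian group with trivial $R$-matrix, its image lies in $m^*(\Hs)$, of dimension $\dim H$), so the claimed right inverse does not exist and uniqueness fails. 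The correct test object -- and this is exactly what the paper's direct sketch uses -- is the ``doubled'' bimodule $(H\Otik H)_\text{reg}$, with the regular left action on one tensor factor and the regular right action on the other: then $h'\oti h\,{\mapsto}\,h\cdot x_\circ\cdot h'$ is a bimodule morphism for \emph{every} $x_\circ$, one sets $\kappa^Z\df j^Z_{(H\Otik H)_\text{reg}}\cir(\idHs\oti\idHs\oti\eta\oti\eta)$, and uniqueness follows from $\iHaa_{(H\Otik H)_\text{reg}}\cir(\idHs\oti\idHs\oti\eta\oti\eta)\eq\idHs\oti\idHs$, which exhibits $\iHaa_{(H\Otik H)_\text{reg}}$ as a split epimorphism.

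By contrast, what you present as a ``cross-check'' is in fact the paper's actual proof: Proposition \ref{bilyub-coend} is obtained by transporting the known identification of the coend of $(X,Y)\,{\mapsto}\,X^\vee\oti Y$ over a finite-dimensional Hopf algebra (here $H'\eq\overline H\Otik H\op$; the paper cites \cite[Sect.\,1.2]{lyub6} and \cite[Sect.\,4.5]{vire4}, equivalently Thm.\,7.4.13 of \cite{KEly}) through the equivalence of Lemma \ref{lem-equiv}(iii), after matching the dinatural family and checking that the coadjoint action of $H\Otik H\op$ on its dual becomes the bicoadjoint structure \erf{bicoad}. That route is sound and would carry the whole proof on its own; if you want the direct argument as well, replace the regular bimodule $H$ by $(H\Otik H)_\text{reg}$ as above.
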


\begin{proof}
The statement follows from the results of \cite[Sect.\,1.2]{lyub6} and
\cite[Sect.\,4.5]{vire4} for the coend of the functor
$\otimes \cir(?^\vee{\times}\,\Id)$ from $H'\Mod\op \Times H'\Mod$ to $H'\Mod$,
with the Hopf algebra $H' \eq \overline H\Oti H\op$,
by transporting them via the equivalence \erf{opequiv} to \HBimod.
\nxl1
We omit the details, but find it instructive to compare a few aspects of a
direct proof to the corresponding parts of the proof of Lemma \ref{lem-dinat}
and of Proposition \ref{prop-F=coend}:
\nxl1
First, dinaturalness follows by an argument completely parallel to the one used
in \erf{dinat_iA_X} to show dinaturalness of the family \erf{def-iHbX}. Second,
the role of the morphism $f_{x_\circ}$ (that is, left action of $H$ on an element
$x_\circ$ of $X$) that appears in formula \erf{rZ_etc_2B} is taken over by the map
   \eqpic{rZ_etcHaa} {140} {47} {
   \put(48,0) { {\Includepichtft{17j}}
   \put(7.8,-8.6){\sse$ H $}
   \put(22.9,111){\sse$ X $}
   \put(33.2,42) {\sse$ x_\circ $}
   \put(36.1,-8.5) {\sse$ H $}
   } }
This map is a bimodule morphism from $H\Otik H$ -- regarded as an $H$-bimodule
$(H\Otik H)_\text{reg}$ via the regular left and right actions on the second
and first factor, respectively -- to $X$. Analogously as in \erf{rZ_etc_3B} one
shows that for any dinatural transformation $j^Z$ from the functor \erf{F-lyub}
to $Z\iN \HBimod$ one has $j^Z_X\cir(\idXs\oti x_\circ) \eq \kappa^Z \cir
\iHaa_X \cir (\idXs\oti x_\circ)$, with the map $\kappa^Z$ defined by $\kappa^Z
\,{:=}\, j^Z_{{(H\Otik H^{})}_\text{reg}} {\circ}\, (\idHs\oti\idHs\oti\eta\oti\eta)$.
And again $\kappa^Z$ is a bimodule morphism, so that the existence part of the
universal property of the coend is established.
Uniqueness follows by specializing to the case $X\eq (H\Otik H)_\text{reg}$ and
observing that $\iHaa_{{(H\Otik H^{})}_\text{reg}}$ is an epimorphism (as is e.g.\
seen from $\iHaa_{{(H\Otik H^{})}_\text{reg}} {\circ}\, (\idHs\oti\idHs\oti\eta\oti\eta)
\eq \idHs\oti\idHs$).
\end{proof}

\begin{cor}
The $H$-bimodule \Haa\ carries the structure of a Hopf algebra, with structure
morphisms given as follows. The unit, counit and coproduct are
   \be
   \bearl
   \eta\bicoa = \eps^\vee \oti \eps^\vee \,, \qquad
   \eps\bicoa = \eta^\vee \oti \eta^\vee \,,
   \Nxl4
   \Delta\bicoa = (\idHs \oti \tau_{\Hss,\Hss}\oti\idHs)
                  \circ \big( {(m\op)}^\vee \oti m^\vee \big) \,,
   \eear
   \labl{Haa-Hopf}
the product is
   \Eqpic{mHaa1,2} {420} {86} {
   \put(-16,87)   {$ m\bicoa ~= $}
   \put(45,0)  {\Includepichtft{111a}
   \put(-5,-8.5) {\sse$ \Hss $}
   \put(11.7,-8.5) {\sse$ \Hss $}
   \put(27.4,53.3) {\sse$ R^{-1} $}
   \put(75,189.6){\sse$ \Hss $}
   \put(83,-8.5) {\sse$ \Hss $}
   \put(100,-8.5){\sse$ \Hss $}
   \put(124.9,52.1){\sse$ R $}
   \put(169,189.6) {\sse$ \Hss $}
   }
   \put(242,87)    {$ = $}
   \put(277,0)  {\Includepichtft{111b}
   \put(-5.4,-8.5) {\sse$ \Hss $}
   \put(11.2,-8.5) {\sse$ \Hss $}
   \put(30,66.4) {\sse$ R^{-1} $}
   \put(75,189.6){\sse$ \Hss $}
   \put(87.7,-8.5) {\sse$ \Hss $}
   \put(104,-8.5){\sse$ \Hss $}
   \put(132.1,66.4){\sse$ R $}
   \put(173,189.6) {\sse$ \Hss $}
   } }
and the antipode is
   \eqpic{apo_Haa} {200} {38} {
   \put(0,41)    {$ \apo_\bicoaa ~= $}
   \put(52,0)  {\Includepichtft{122a}
   \put(-5,-8.5) {\sse$ \Hss $}
   \put(59,100.4){\sse$ \Hss $}
   \put(10,-.5)  {\sse$ R^{-1} $}
   \put(75,-8.5) {\sse$ \Hss $}
   \put(138.8,100.5) {\sse$ \Hss $}
   \put(93,-.5)  {\sse$ R $}
   } }
\end{cor}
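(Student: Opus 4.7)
My plan is to deduce the Corollary from the general construction, due to Lyubashenko \cite{lyub6,kerl5}, of a Hopf algebra structure on the coend $\coend X$ in any braided rigid monoidal category with enough structure. Since Proposition \ref{bilyub-coend} has already identified \Haa\ (with dinatural family \erf{def_iHaa_X}) as this coend for $\C \eq \HBimod$, the Hopf algebra axioms hold automatically once the structure morphisms are obtained via the universal property; the genuine content of the Corollary is therefore the verification that the linear maps spelt out in \erf{Haa-Hopf}, \erf{mHaa1,2} and \erf{apo_Haa} agree with Lyubashenko's abstract prescription.

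First I would treat unit and counit. Lyubashenko's recipe sets $\eta\bicoa \eq \iHaa_{\mathbf 1}$, where $\mathbf 1 \eq \ko_\eps$ is the monoidal unit of \HBimod; substituting this object into \erf{def_iHaa_X} and using that both actions on $\ko_\eps$ are given by $\eps$ yields $\eta\bicoa \eq \eps^\vee \oti \eps^\vee$ at once. Dually, the counit $\eps\bicoa$ is uniquely determined by the dinatural family $\{d_X \cir (\idXs \oti \cdot)\}_X$; evaluating at $X \eq H$ with the regular bimodule structure, and comparing with $\iHaa_H$, identifies $\eps\bicoa$ with $\eta^\vee \oti \eta^\vee$.

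Next, the product. Lyubashenko defines $m\bicoa$ by prescribing $m\bicoa \cir (\iHaa_X \oti \iHaa_Y) $ to equal $\iHaa_{X\otimes Y}$ precomposed with a braiding-and-evaluation pattern analogous to the one used for $\Qq_{\HK,\HK}$ in \erf{QHH}. Substituting the explicit braiding \erf{bibraid} of \HBimod, which is where $R$ and its inverse enter, and simplifying yields one of the two pictures in \erf{mHaa1,2}; the equality between the two pictures is a consequence of the intertwining relation $\Delta^{\rm op} \eq \ad R \cir \Delta$ together with the hexagon axioms \erf{deqf-qt}. The coproduct $\Delta\bicoa$ is then obtained from $m\bicoa$ by dualisation against the evaluation pairing: because one tensor slot of \Haa\ carries a left-dual factor and the other a right-dual factor, the dualisation automatically produces the combination $(m\op)^\vee \oti m^\vee$ with the flip $\tau_{\Hss,\Hss}$ interposed, matching \erf{Haa-Hopf}. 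Finally, the antipode is computed from the standard coend formula
\[
\apo_\bicoaa \cir \iHaa_X \eq \iHaa_{X^\vee_{}} \cir (\text{braiding, evaluation and sovereign structure}) \,,
\]
with the sovereign structure implemented via \erf{pivX}; after simplification using centrality of $v$ and the relation $\apo^2 \eq \ad \uvi$, this reproduces \erf{apo_Haa}.

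The main obstacle is the careful bookkeeping of R-matrix and antipode insertions in the graphical calculations, rather than any conceptual difficulty. A cleaner alternative would be to carry out the whole computation for the coend in $(\overline H \otik H\op)\Mod$, where Lyubashenko's formulas assume a particularly transparent form because the structure morphisms can be read off the R-matrix of $\overline H \otik H\op$ directly, and then to transport the result to \HBimod\ via the braided monoidal equivalence of Lemma \ref{lem-equiv}(iii). That route trades graphical manipulation for a single, well-contained algebraic calculation, and the final explicit formulas one obtains are the same.
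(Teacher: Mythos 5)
Your overall strategy coincides with the paper's: the Corollary is proved there precisely by specializing Lyubashenko's general Hopf algebra structure on the coend (citing \cite{lyub8}, \cite{vire4}, \cite{fuSc17}) to $\C \eq \HBimod$, with the explicit braiding \erf{bibraid} accounting for the R-matrix insertions in \erf{mHaa1,2} and \erf{apo_Haa}, and with the intertwining property $\Delta^{\!\rm op} \eq \ad R \cir \Delta$ supplying the second equality in \erf{mHaa1,2} -- exactly as you propose. Your alternative of computing in $(\overline H\otik H\op)\Mod$ and transporting through Lemma \ref{lem-equiv}(iii) is likewise consistent with the paper, which uses that very equivalence to establish Proposition \ref{bilyub-coend}.

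One step of your sketch is misstated, however: the coproduct $\Delta\bicoa$ is \emph{not} obtained from $m\bicoa$ by dualisation against the evaluation pairing. Dualising the braided product $m\bicoa$ would yield a map carrying R-matrix insertions, and converting it into a coproduct on \Haa\ itself would require the nondegenerate Hopf pairing of the coend, i.e.\ factorizability -- a far stronger input than is needed, and not obviously reproducing the stated formula. In Lyubashenko's construction the coproduct is instead defined directly on the dinatural family by inserting a coevaluation, $\Delta\bicoa\cir\iHaa_X \eq (\iHaa_X\oti\iHaa_X)\cir(\idXv\oti b_X\oti\id_X)$, and therefore comes out dual to the multiplication of $H$ itself; this is why \erf{Haa-Hopf} is free of R-matrices. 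With that prescription substituted, the remainder of your verification (unit as $\iHaa_{\one}$, counit characterized by $\eps\bicoa\cir\iHaa_X \eq d_X$, product and antipode read off from the braiding \erf{bibraid} and the sovereign structure \erf{pivX}) goes through and matches the paper's argument.
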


\begin{proof}
We just have to specialize the general results of \cite{lyub8}, which apply to the
coend of the functor $\otimes \cir(?^\vee\oti\Id)\colon \CopC \To \C$ in any
\ko-linear abelian ribbon category
\C, to the case $\C \eq \HBimod$. The calculations are straightforward,
and except for the multiplication and the antipode they are very short.
\\
Let us just mention that the first equality in \eqref{mHaa1,2} follows from the
general results (see \cite[Prop.\,2,3]{lyub8}, as well as \cite[Sect.\,1.6]{vire4}
or \cite[Sect.\,4.3]{fuSc17}) together with \eqref{Hbim_dualactions} and the
defining relation \erf{deqf-qt} of the R-matrix. The second equality in
\eqref{mHaa1,2} follows with the help of standard manipulations from the fact
that the R-matrix intertwines the coproduct and the opposite coproduct.
\end{proof}

\begin{prop}
(i) If $\Lambda$ is a two-sided integral of $H$, then
   \be
   \lambda\bicoa := \Lambda^\vee \oti \Lambda^\vee
   \ee
is two-sided cointegral of
the Hopf algebra $(\Haa,m\bicoa,\eta\bicoa,\Delta\bicoa,\eps\bicoa,\apo\bicoa)$.
\nxl1
(ii) If $\lambda$ is a right cointegral of $H$, then
   \be
   \Lambda\bicoa := \lambda^\vee \oti \lambda^\vee
   \labl{Haa-integral}
is a two-sided integral of
$(\Haa,m\bicoa,\eta\bicoa,\Delta\bicoa,\eps\bicoa,\apo\bicoa)$.
\end{prop}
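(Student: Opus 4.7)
The plan is to verify each of the two-sided (co)integral conditions for the Hopf algebra $\Haa$ by substituting the explicit structure morphisms given in the preceding corollary --- formulas \erf{Haa-Hopf} for unit, counit and coproduct, \erf{mHaa1,2} for the product, and \erf{apo_Haa} for the antipode --- and reducing each identity to known properties of the integral $\Lambda$ and the cointegral $\lambda$ of $H$, supplemented by the R-matrix axioms \erf{deqf-qt} and the unimodularity identity \erf{lambda-Oapoo}.

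For part (i) the coproduct $\Delta\bicoa$ does not involve the R-matrix: it is, up to the flip of the two middle factors, the tensor product of $(m\op)^\vee$ acting on the first $\Hs$-factor and $m^\vee$ on the second. Evaluating $(\lambda\bicoa \oti \id_\Haa) \cir \Delta\bicoa$ on a generic element $p \oti q \iN \Hs \otik \Hs$ and pairing the resulting element of $\Hs \otik \Hs$ against test elements of $H$, the right-cointegral equation $(\lambda\bicoa \oti \id_\Haa) \cir \Delta\bicoa \eq \eta\bicoa \cir \lambda\bicoa$ reduces to the conjunction of the two identities $p(h\Lambda) \eq \eps(h)\, p(\Lambda)$ and $q(\Lambda h) \eq \eps(h)\, q(\Lambda)$, valid for all $p,q \iN \Hs$ and $h \iN H$. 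These are precisely the right- and left-integral properties of $\Lambda$, both of which hold since $H$ is unimodular. The left-cointegral equation follows from the same two identities with the roles of the two $\Hs$-factors exchanged.

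For part (ii), the equations $m\bicoa \cir (\id_\Haa \oti \Lambda\bicoa) \eq \Lambda\bicoa \cir \eps\bicoa$ and $m\bicoa \cir (\Lambda\bicoa \oti \id_\Haa) \eq \Lambda\bicoa \cir \eps\bicoa$ are more intricate because $m\bicoa$ is an R-matrix-twisted version of the tensor square of the coproduct of $H$. Substituting $\Lambda\bicoa \eq \lambda^\vee \oti \lambda^\vee$ into the formula \erf{mHaa1,2}, each $\lambda$-factor gets composed with a multiplication of $H$ into which one leg of $R$ or $R^{-1}$ has been inserted. Using the right-cointegral property of $\lambda$, the unimodularity identity \erf{lambda-Oapoo}, the R-matrix axioms \erf{deqf-qt} and the counit-through-R-matrix relation $(\eps\oti\id_H)\cir R \eq \id_H \eq (\id_H\oti\eps)\cir R$, the R-matrix legs collapse and the full expression reduces to $\Lambda\bicoa$ scaled by $\eps\bicoa$ evaluated on the remaining tensor factor.

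The main technical obstacle is precisely this R-matrix bookkeeping in part (ii): the legs of $R^{\pm1}$ must be pushed past the multiplication layers in the correct order before being absorbed by the counit. A conceptually cleaner --- though less self-contained --- alternative is to transport the statement along the braided monoidal equivalence $\HBimod \,{\simeq}\, (\overline H \Otik H\op)\Mod$ of Lemma \ref{lem-equiv}(iii): under this equivalence $\Haa$ corresponds to the Lyubashenko coend of the tensor-product Hopf algebra $\overline H \Otik H\op$, whose Hopf algebra structure factors as an external tensor product over \ko, so that the known fact (for factorizable $H$) that the cointegral $\lambda$ provides a two-sided integral of $\Ha \iN \HMod$ applies to each factor separately and yields $\Lambda\bicoa \eq \lambda^\vee \oti \lambda^\vee$ at once.
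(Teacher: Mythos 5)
Your proposal is correct and follows essentially the same route as the paper: a direct verification from the explicit structure morphisms, with (i) reducing, after dualization, to the two-sided integral property of $\Lambda$, and (ii) resting on the right cointegral property of $\lambda$ together with the unimodularity identity \erf{lambda-Oapoo}. Two minor remarks: the paper sidesteps most of the R-matrix bookkeeping you describe in (ii) by using the first expression in \eqref{mHaa1,2} for the left-integral property and the second for the right-integral property, and note that $(\eps\oti\id_H)\cir R$ is the unit $\eta$ (the element $1\iN H$), not $\id_H$, though this slip does not affect your argument.
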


\begin{proof}
(i) Inserting the definitions one has
   \be
   \bearll
   (\lambda\bicoa \oti \idHsHs) \circ \Delta\bicoa
   = \big( m \cir (\Lambda \oti \idHs) {\big)}^*
   \otimes \big( m \cir (\idHs \oti \Lambda) {\big)}^*  \qquand
   \Nxl4
   (\idHsHs \oti \lambda\bicoa) \circ \Delta\bicoa
   = \big( m \cir (\idHs \oti \Lambda) {\big)}^*
   \otimes \big( m \cir (\Lambda \oti \idHs) {\big)}^* .
   \eear
   \ee
Since $\Lambda$ is a two-sided integral of $H$, both of these expressions are
equal to $\eta\bicoa \cir \lambda\bicoa$.
\nxl2
(ii) That $\Lambda\bicoa$ is a left integral readily follows from the first
expression for the product in \eqref{mHaa1,2} together with the fact that
$\lambda$ is a right cointegral and that it satisfies \erf{lambda-Oapoo}.
That $\Lambda\bicoa$ is also a right cointegral follows in the same way by
using instead the second expression in \eqref{mHaa1,2} for the product.
\end{proof}

         \newpage


\section{Motivation from conformal field theory}\label{app:cft}

A major motivation for the mathematical results of this paper comes from
structures that originate in full local two-dimensional conformal field theory.
In this appendix we briefly describe some of these structures.

In representation theoretic approaches to conformal field theory the starting
point is a chiral symmetry algebra together with its category \C\ of
representations. For
any mathematical structure that formalizes the physical concept of chiral
symmetry algebra, the category \C\ can be endowed with a lot of additional
structure. In particular, in many cases it leads to a so-called modular functor.
A modular functor actually consists of a collection of functors.
Namely, to any compact Riemann surface $\Sigma_{g,n}$ of genus $g$ and with
a finite number $n$ of marked points it assigns a functor
   \be
   F_{\Sigma_{g,n}} :\quad \C^{{\scriptscriptstyle\boxtimes} n} \to \Vect
   \ee
from $\C^{{\scriptscriptstyle\boxtimes} n}$ to the category \Vect\ of
finite-dimensional complex vector spaces. This collection of functors
is required to obey a system of compatibility conditions, which in
particular expresses factorization constraints and accommodates actions
of mapping class groups of surfaces.
Thus, selecting for a genus-$g$ surface $\Sigma_{g,n}$ with $n$ marked points
any $n$-tuple $(V_1,V_2,...\,, V_n)$ of objects of the category \C, we obtain
a finite-dimensional complex vector space $F_{\Sigma_{g,n}}(V_1,V_2,...\,,V_n)$
which carries an action of the mapping class group of $\Sigma_{g,n}$. In chiral
conformal field theory, this space plays the role of the space of conformal
blocks with chiral insertion of type $V_i$ at the $i$th marked point of
$\Sigma_{g,n}$.

\medskip

In the particular case that the category \C\ is finitely semisimple,
the structure of a modular functor
is reasonably well understood. Specifically, precise conditions are known under
which the representation category of a vertex algebra $\mathscr V$ is a modular
tensor category. In this case the Reshetikhin-Tu\-ra\-ev construction allows one
to obtain a modular functor just on the basis of \C\ as an abstract category.
In a remarkable development, Lyubashenko and others (see \cite{KEly} and references
cited there) have extended many aspects of this story to a larger class of
monoidal categories that are not necessarily semisimple any longer. In particular,
given an abstract monoidal category with adequate additional properties, one can
still construct representations of mapping class groups.

Representation categories that are not semisimple are of considerable physical
interest; they arise in particular in various systems of statistical mechanics.
The corresponding models of conformal field theory have been termed ``logarithmic''
conformal field theories. A complete characterization of this class of models has
not been achieved yet, but a necessary requirement ensuring tractability is that
the category \C, while being non-semisimple, still possesses certain finiteness
properties, e.g.\ each object should have a composition series of finite length.

In the present paper we consider an even more restricted, but non-empty, subclass,
namely the one for which the monoidal category \C\ is equivalent to the
representation category of a finite-dimensional factorizable ribbon Hopf algebra.
Finite-dimensional Hopf algebras $H_\text{KL}$ have indeed been associated, via a
Kazhdan-Lusztig correspondence, to certain classes of logarithmic conformal field 
theories. These Hopf algebras $H_\text{KL}$ do not have an $R$-matrix, albeit 
they do have a monodromy matrix that is even factorizable \cite{fgst} 
(so that in particular the \pqt s which we introduced in section \ref{sec-modinv} 
can still be defined).  Accordingly our results do not perfectly match the presently
available conformal field theoretic proposals. On the other hand, it is
apparent that the Hopf algebras $H_\text{KL}$ are not quite
the appropriate algebraic structures: their representation categories, albeit being
equivalent to the representation categories of the relevant vertex algebras
as abelian categories, are not equivalent to them as monoidal categories.\,%
  \footnote{~Also, constructing algebras with the help of the Kazhdan-Lusztig
  correspondence involves some arbitrariness. It has been suggested \cite{seTi4}
  that one should better work with Hopf algebras in a category of Yetter-Drinfeld
  modules built from braided vector spaces, rather than Hopf algebras in \Vect.}

\medskip

The Riemann surface of interest to us is $\Sigma_{1,1}$, a one-punctured torus.
This surface is distinguished by the fact \cite{yett6} that it carries a
natural Hopf algebra structure in the category of three-cobordisms.
For general reasons, the functor $F_{\Sigma_{1,1}}$ is representable:
   \be
   F_{\Sigma_{1,1}} \cong\, \HomC(K_\C,-) \,.
   \ee
  In this way we obtain for the category \C\ a distinguished object, and
  this object is actually a Hopf algebra in \C. The construction
  in \cite{lyub6} turns the logic around: it starts with a
  Hopf algebra object $K_\C\iN\C$ canonically associated to the braided
  category and constructs the functor as $F_{\Sigma_{1,1}}(V) \,{=}\, \HomC(K_\C,V)$
  for any object $V{\in}\;\C$.

{}From the point of view of two-dimensional conformal field theory, the
one-punctured torus is the surface relevant for partition functions.
We are interested in this paper in a candidate for the partition function of the
space of bulk fields and thus in one-point functions of bulk fields on the torus.
The space $\mathcal H_\text{bulk}$ of bulk fields carries the structure of a
bimodule over the chiral symmetry algebra $\mathscr V$. In the case that the
category \C\ is semisimple, a particularly simple solution is given by the bulk
state space $\bigoplus_i S_i^\vee \,{\otimes_\complex^{}}\, S_i$, where the (finite)
summation is over all isomorphism classes $[S_i]$ of simple $\mathscr V$-modules.
The corresponding partition function is the so-called \emph{charge conjugation}
modular invariant. It has been conjectured \cite{qusc4,garu2} that this type of
bulk state space exists in the non-semisimple case as well, and that
as a left $\mathscr V$-module it decomposes as
   \be
   \mathcal H_\text{bulk} \,\cong\, \bigoplus_i P_i^\vee \,{\otimes_\complex^{}}\, S_i \,,
   \labl{bulkspace}
with $P_i$ the projective cover of the simple $\mathscr V$-module $S_i$.

According to the principle of holomorphic factorization, a correlation function
for a conformal real surface is an element in the space of conformal blocks
associated to the oriented double of the surface. Thus a one-point function on
the torus is a specific element in the space of conformal blocks associated to
the double of the torus (as a real surface), that is, of the disconnected sum
of two copies of $\Sigma_{1,1}$ with opposite orientation.
For any selection of a pair $(V_1,V_2)$ of objects of \C\ at the two points
on the double cover that lie over the one insertion
point on the torus, this space of conformal blocks is the tensor product
$\HomC(K_\C,V_1)^*_{} \,{\otimes_\complex}\, \HomC(K_\C,V_2)$.
More compactly, this space can be written as a morphism space of another braided
tensor category $\D \,{:=}\, \overline\C\boti\C$, which has its own canonical
Hopf algebra object $K_\D$. As we have noted in Section \ref{ssec:equiv}, if \C\
is the category \HMod\ of left modules over a finite-dimensional factorizable
ribbon Hopf algebra $H$, then \D\ can be identified with the category of
bimodules over $H$, with a tensor product derived from the coproduct on $H$.

Compatibility of \erf{bulkspace} with short exact sequences implies that the 
character of the projective cover $P_i$ is a linear combination of simple 
characters, with coefficients given by the entries of the Cartan matrix of the 
category. Thus in the charge conjugation case the Cartan matrix provides the 
coefficients in the bilinear combination of characters that, owing to 
holomorphic factorization, describes the bulk partition function. Indeed, as 
mentioned in Remark \ref{Cmatrix}(iii), the same structure is seen when 
expressing the morphism $\eps\bico \cir \wiha {\Delta\bico}$, which (as follows 
from Remark \ref{rem:rep}(i)) is nothing but the character of \Hb\ for the 
algebra $K_\D$, as a bilinear combination of characters for the algebra $K_\C$.

\medskip

Correlation functions in conformal field theory should be invariant under the 
relevant mapping class group. For the one-point correlation function on the
torus we are thus interested in finding an object $F\iN\D$ corresponding to the space
of bulk fields as well as a vector
   \be
   Z_F \in \Hom_\D(\HK_\D, F)
   \ee
that is invariant under the action of the mapping class group \slze\ of the
one-punctured torus. Moreover, comparison with
the semisimple situation, in which \C\ is a modular tensor category,
indicates that the object $F$ should possess a structure of a commutative symmetric
Frobenius algebra in \D. The partition function, given by $\eps_F \cir Z_F$, is 
then invariant under the modular group \slz.

This is precisely what the present paper achieves
for the case $\C \,{\simeq}\, \HMod$:\ given a ribbon Hopf algebra
automorphism $\omega$ of $H$, we obtain a commutative symmetric Frobenius algebra
\Hbo\ in the category \HBimod. As an object, \Hbo\ is the twisted \bicom\
${}^{\idsm_H\!}(\Hb)^{\omega}_{}$, so that e.g.\ its decomposition as a left
$H$-module precisely reproduces the decomposition \erf{bulkspace} above.
(The conjecture \erf{bulkspace} has only been made for the case corresponding to
trivial automorphism $\omega\eq\id_H$, though.) Also note that according to Remark
\ref{rem:vaco}(ii) the counit of \Hbo\ is unique up to a non-zero scalar; in the
conformal field theory context this amounts to uniqueness of the vacuum state.
The \pqt\ \erf{Sinv-3} of the coproduct $\Delta\colon \Hbo \To \Hbo\oti\Hbo$
furnishes a \slze-invariant morphism $Z_\omega \iN \Hom(\HK_\D,\Hbo)$. The
morphism $\eps_{\Hbo} \cir Z_\omega$, associated to $H$ and $\omega$, is a natural candidate 
for a modular invariant partition function on the torus.

  \vskip 5.5em

\noindent{\sc Acknowledgments:}
JF is largely supported by VR under project no.\ 621-2009-3993.
CSc is partially supported by the Collaborative Research Centre 676 ``Particles,
Strings and the Early Universe - the Structure of Matter and Space-Time'' and
by the DFG Priority Programme 1388 ``Representation Theory''.
We thank Jens Fjelstad for discussions.
JF is grateful to Hamburg university, and in particular to CSc and Astrid
D\"orh\"ofer, for their hospitality during the time when this study was initiated.

\newpage

  \newcommand\wb{\,\linebreak[0]} \def\wB {$\,$\wb}
  \newcommand\Bi[2]    {\bibitem[#2]{#1}}
  \newcommand\inBo[9]  {{\em #9}, in:\ {\em #1}, {#2}\ ({#3}, {#4} {#5}), p.\ {#6--#7} }
  \newcommand\inBO[9]  {{\em #9}, in:\ {\em #1}, {#2}\ ({#3}, {#4} {#5}), p.\ {#6--#7} {{\tt [#8]}}}
  \renewcommand\J[7]   {{\em #7}, {#1} {#2} ({#3}) {#4--#5} {{\tt [#6]}}}
  \newcommand\JO[6]    {{\em #6}, {#1} {#2} ({#3}) {#4--#5} }
  \newcommand\JP[7]    {{\em #7}, {#1} ({#3}) {{\tt [#6]}}}
  \newcommand\BOOK[4]  {{\em #1\/} ({#2}, {#3} {#4})}
  \newcommand\PhD[2]   {{\em #2}, Ph.D.\ thesis #1}
  \newcommand\prep[2]  {{\em #2}, preprint {\tt #1}}
  \def\adma  {Adv.\wb Math.}
  \def\amjm  {Amer.\wb J.\wb Math.}
  \def\ajse  {Arabian Journal for Science and Engineering}
  \def\coia  {Com\-mun.\wB in\wB Algebra}
  \def\coma  {Con\-temp.\wb Math.}
  \def\comp  {Com\-mun.\wb Math.\wb Phys.}
  \def\jhep  {J.\wb High\wB Energy\wB Phys.}
  \def\joal  {J.\wB Al\-ge\-bra}
  \def\jopa  {J.\wb Phys.\ A}
  \def\joms  {J.\wb Math.\wb Sci.}
  \def\jktr  {J.\wB Knot\wB Theory\wB and\wB its\wB Ramif.}
  \def\jpaa  {J.\wB Pure\wB Appl.\wb Alg.}
  \def\momj  {Mos\-cow\wB Math.\wb J.}
  \def\nupb  {Nucl.\wb Phys.\ B}
  \def\pams  {Proc.\wb Amer.\wb Math.\wb Soc.}
  \def\plms  {Proc.\wB Lon\-don\wB Math.\wb Soc.}
  \def\ruma  {Revista de la Uni\'on Matem\'atica Argentina}
  \def\slnm  {Sprin\-ger\wB Lecture\wB Notes\wB in\wB Mathematics}
  \def\taia  {Top\-o\-lo\-gy\wB and\wB its\wB Appl.}

\small

\end{document}